\newtheorem{thm}{Theorem}[section]
\newtheorem{lem}[thm]{Lemma}
\newtheorem{prop}[thm]{Proposition}
\newtheorem{cor}[thm]{Corollary}
\theoremstyle{definition}
\newtheorem{remark}[thm]{Remark}
\numberwithin{equation}{section}
\DeclareMathOperator{\dimh}{dim_H}
\DeclareMathOperator{\diam}{diam}
\begin{document}

\title{The Hausdorff dimension of multiply Xiong 
chaotic sets}

\author[J. Li]{Jian Li}
\address[J. Li]{Department of Mathematics,
 Shantou University, Shantou, 515063, Guangdong, China}
\email{lijian09@mail.ustc.edu.cn}

\author[J. L\"u]{Jie L\"u}
 \address[J. L\"u]{School of Mathematics, South China Normal University,
 Guangzhou 510631, China}
 \email{ljie@scnu.edu.cn}
 
\author[Y. Xiao]{Yuanfen Xiao}
\address[Y. Xiao]{Department of Mathematics, University of Science and Technology of China, Hefei, 
 230026, Anhui, China}
 \email{xyuanfen@mail.ustc.edu.cn}

\subjclass[2010]{54H20, 37B05, 37B10, 37B99}

 \keywords{multiply Xiong chaos, multiply proximal cells, symbolic systems, the Gauss system, Hausdorff dimension}

 \maketitle
 \begin{abstract}
We construct a multiply Xiong chaotic set 
with full Hausdorff dimension everywhere 
that is contained in some multiply proximal cell   
for the full shift over finite symbols and the Gauss system 
respectively. 
 
 \end{abstract}

\section{Introduction}
By a topological dynamical system, we mean a pair 
$(X,T)$, where $X$ is a metric space and 
$T$ is a continuous self-map of $X$.
A compatible metric on $X$ is denoted by $\rho$.
A subset $S$ of $X$ is said to be scrambled 
if any two distinct points $x,y\in S$ satisfy 
\[
\liminf_{n\to\infty}\rho(T^nx,T^ny)=0  \text{ and }\limsup_{n\to\infty}\rho(T^nx,T^ny)>0.
\]
We say that a topological dynamical system $(X,T)$ is Li-Yorke chaotic if there exists an uncountable scrambled subset of $X$.
As Li-Yorke chaos only requires an uncountable scrambled set, it is natural to ask  how ``large'' a scrambled set can be.
There are two approaches about the size of a set: topological and measure-theoretic.
In \cite{BHS08}, Blanchard, Huang and Snoha studied the topological size 
of scrambled sets extensively, see also the references therein.
In \cite{BL99}, Balibrea and  Jim\'enez L\'opez surveyed the Lebesgue measure of scrambled sets for continuous maps on the interval till then.
In \cite{B09}, Bruin and Jim\'enez L\'opez studied the Lebesgue measure of scrambled sets for $C^2$ and $C^3$  multimodal interval maps $f$ with non-flat critical points.
Recently, in \cite{Li2017} Liu and Li constructed a scrambled set with full Hausdorff dimension  for the Gauss system.

In the study of the complexity of weakly mixing system,
Xiong introduced a kind of chaos in \cite{XX91} and \cite{X91}, 
which is called Xiong chaos nowadays.
We say that a subset $E$ of $X$ containing at least two points is Xiong chaotic 
if  for any subset $A$ of $E$ and any continuous function
$g\colon A\to X$, 
there exists an increasing sequence $\{q_k\}$ of positive integers such that for every $x\in A$,
\[\lim_{k\to\infty}T^{q_k}(x)=g(x).\]
It is clear that every Xiong chaotic set is scrambled.
It is shown in \cite{X91}
that a non-trivial topological dynamical system 
is weakly mixing if and only if it has a $F_\sigma$, $c$-dense, Xiong chaotic set.
In \cite{X95}, Xiong constructed a Xiong chaotic set 
 with full Hausdorff
dimension everywhere in the full shift over finite symbols.
Wu and Tan generalized this result to the full shift over 
countable symbols in \cite{WT07}.

In \cite{BH08}, Blanchard and Huang introduced a local version of weak mixing and showed that if a topological dynamical system has positive topological entropy, then it has ``many'' weakly mixing sets and then ``local'' Xiong chaotic sets.
Recently, Huang et. al generalized this result to $\Delta$-weakly mixing sets \cite{HLYZ17}.
In fact, inspired by the proof of Theorem A in \cite{HLYZ17}, we can proposal the following 
concept of multiply Xiong chaotic set.
We say that a subset $E$ of $X$ containing at least two points is multiply Xiong 
chaotic if 
for any $d\in\mathbb{N}$, any subset $A$ of $E$,  
and continuous functions
$g_j\colon B\to X$ for $j=1,2,\dotsc,d$, there exists an increasing sequence $\{q_k\}$ of positive integers
such that for every $x\in A$,
\[\lim_{k\to\infty}T^{j\cdot q_k}(x)=g_j(x), \quad j=1,2,\dotsc,d.\]
With this terminology, combining with Theorem A and Proposition 3.2 in \cite{HLYZ17}, we have that a non-trivial 
topological dynamical system is $\Delta$-transitive 
if and only if it has a dense, $\sigma$-Cantor, multiply Xiong chaotic set.  In addition, the author in \cite{Liu2019} showed that 
positive topological entropy implies the existence of $\Delta $-weakly mixing subsets for   
finitely generated torsion-free discrete nilpotent group actions.  

The main aim of this paper is to study the 
Hausdorff dimension of multiply Xiong chaotic sets 
in the full shift and the Gauss system. 
In fact, we will study chaotic sets in the 
multiply proximal cell of a point.

Recall that a pair $(x,y)\in X\times X$ is proximal if
$\liminf_{n\to\infty}\rho(T^nx,T^ny)=0$ and 
the proximal cell of a point $x\in X$ is defined 
by
$Prox(x,T)=\{y\in X\colon (x,y)\textrm{ is proximal}\} $.
The structure of the proximal cell plays an 
important role in a topological dynamical system. 
Many results concerning the structure of proximal cell have been studied. For example, Auslander and Ellis proved that every proximal cell contains a minimal point for a compact system \cite{Au60}. 
Moreover, in a weakly mixing system  every proximal cell  is residual  \cite{AK03}. 
In \cite{HW04},  Huang, Shao and Ye proved an 
equivalent statement of an
$\mathcal{F}$-mixing system by the dense $G_\delta $ 
structure of $ Prox(x,T)$ for any $x\in X $ and 
they gave a detailed
description of the proximal cells for $\mathcal{F}$-mixing systems where $ \mathcal{F} $ is a Furstenberg family satisfying some special properties. 

We generalize the proximal to multiply proximal as follows.  
A pair $(x,y)\in X\times X$  is called  multiply proximal if 
for any $ d\in\mathbb{N}$, the pair $ (x,y) $ satisfies 
$\liminf_{n\to\infty}\max_{1\leq j\leq 
d}$  $\rho(T^{j\cdot n}(x), T^{j\cdot n}(y))=0$
and the multiply proximal cell of a point $ x\in X$ 
 is denoted by $ MProx(x,T)=\{y\in X\colon (x,y)\textrm{ 
 is multiply proximal} \} $. It is clear that $MProx(x,T) $ is a subset of $Prox(x,T)$. 
We will study the multiply Xiong chaos in multiply proximal cells for some special systems. 
Now we are ready to state the main results  of 
this paper as follows:

\begin{thm}\label{main1}
Let $ N\geq 2 $ be a positive integer. 
In the full shift $(\Sigma_N,\sigma)$ over $N$-symbols,
for every $z\in \Sigma_N$, the multiply proximal 
cell of $z$ contains
a  multiply Xiong chaotic sets 
with full Hausdorff dimension everywhere.
\end{thm}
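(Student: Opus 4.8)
The plan is to build $E$ explicitly as the set of points of $\Sigma_N$ whose coordinates are unconstrained except on a sparse, carefully placed family $\mathcal W$ of finite ``control windows'', on which they are forced by a rule depending only on coordinates lying strictly to their left. Normalise the metric by $\rho(x,y)=N^{-\min\{i:\,x_i\ne y_i\}}$, so that $\dimh\Sigma_N=1$. Choose very slowly increasing parameters $M_k\le L_k\le D_k\to\infty$ and define an increasing sequence $b_0<b_1<\cdots$ recursively, together with the windows of ``stage $k$'', all of which lie in $(b_{k-1},b_k)$: for each $d\le D_k$ list all $d$-tuples $(G_1,\dots,G_d)$ of maps $\{0,\dots,N-1\}^{M_k}\to\{0,\dots,N-1\}^{L_k}$; to the $\rho$-th tuple in the combined list attach the candidate shift $q^{(\rho)}=b_{k-1}(D_k+1)^{\rho}$ and declare each interval $[jq^{(\rho)},jq^{(\rho)}+L_k)$, $1\le j\le d$, a control window carrying the rule ``$x$ on $[jq^{(\rho)},jq^{(\rho)}+L_k)$ equals $G_j(x|_{[0,M_k)})$''. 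If $P_k-1$ tuples were listed, also declare, with $n_k=b_{k-1}(D_k+1)^{P_k}$ and $1\le j\le D_k$, each interval $[jn_k,jn_k+L_k)$ a control window with rule ``$x$ on $[jn_k,jn_k+L_k)$ equals $z|_{[jn_k,jn_k+L_k)}$''; then set $b_k=b_{k-1}(D_k+1)^{P_k+1}$. Since $M_k<b_{k-1}$ every rule is causal, and since the candidate shifts of a stage grow geometrically with ratio $D_k+1$ the windows in $\mathcal W$ are pairwise disjoint. Put
\[
E=\{x\in\Sigma_N:\ x|_W\text{ obeys the rule of }W\text{ for every }W\in\mathcal W\}.
\]
Filling in the free coordinates arbitrarily and then the windows in order of position shows that $E$ is the continuous image of the product over its free coordinates, hence compact, and that this coordinate map is a homeomorphism onto $E$.

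For the dimension, push the uniform product measure on the free coordinates forward to a measure $\mu$ on $E$. The key estimate is that controlled coordinates are extremely sparse: inside each block $[b_{k-1}(D_k+1)^{\rho},b_{k-1}(D_k+1)^{\rho+1})$ at most $D_kL_k$ coordinates are controlled, so if the $b_k$ outgrow $M_k,L_k,D_k$ fast enough (for instance taking $M_k,L_k,D_k$ of size about $\log_N\log_N\log_N b_{k-1}$) the number of controlled coordinates in $[0,p)$ is $o(p)$ uniformly in $p$. Hence a cylinder of length $p$ has $\mu$-measure $N^{-p(1-o(1))}$ and diameter $N^{-p}$, and the mass distribution principle gives $\dimh E\ge s$ for all $s<1$, i.e.\ $\dimh E=1$; intersecting with a cylinder $[w]$ that meets $E$ only fixes finitely many free coordinates, so the same argument for the conditional measure gives $\dimh(E\cap[w])=1$, which is full Hausdorff dimension everywhere. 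That $E\subseteq MProx(z,\sigma)$ is then immediate: for $x\in E$ one has $x|_{[jn_k,jn_k+L_k)}=z|_{[jn_k,jn_k+L_k)}$ for $1\le j\le D_k$, so $\max_{1\le j\le d}\rho(\sigma^{jn_k}x,\sigma^{jn_k}z)\le N^{-L_k}$ as soon as $D_k\ge d$, whence $\liminf_n\max_{1\le j\le d}\rho(\sigma^{jn}x,\sigma^{jn}z)=0$ for every $d$.

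Now fix $d$, $A\subseteq E$ and continuous $g_1,\dots,g_d\colon A\to\Sigma_N$; I must produce one increasing sequence $q_k\to\infty$ with $\sigma^{jq_k}x\to g_j(x)$ for all $x\in A$ and $1\le j\le d$. Since $\Sigma_N$ is compact each $g_j$ is uniformly continuous, so there is a nondecreasing $L\mapsto M(L)$ such that the word $g_j(x)|_{[0,L)}$ is determined by $x|_{[0,M(L))}$ for every $j$ and every $x\in A$. For a stage $k$ with $D_k\ge d$ set $L^{(k)}=\max\{L\le L_k:\ M(L)\le M_k\}$ (so $L^{(k)}\to\infty$), and for $1\le j\le d$ define $G_j^{(k)}\colon\{0,\dots,N-1\}^{M_k}\to\{0,\dots,N-1\}^{L_k}$ by reading off the value $g_j(x)|_{[0,L^{(k)})}$ from the length-$M(L^{(k)})$ prefix of the input (padding the output to length $L_k$, and using a fixed default off the prefixes of points of $A$). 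Let $q_k$ be the candidate shift attached at stage $k$ to the tuple $(G_1^{(k)},\dots,G_d^{(k)})$, and assign $q_k$ arbitrarily for the finitely many $k$ with $D_k<d$. Then for every $x\in A$ and $1\le j\le d$ the window rule gives $x|_{[jq_k,jq_k+L_k)}=G_j^{(k)}(x|_{[0,M_k)})$, whose first $L^{(k)}$ letters are $g_j(x)|_{[0,L^{(k)})}$; hence $\rho(\sigma^{jq_k}x,g_j(x))\le N^{-L^{(k)}}\to 0$, while $q_k$ is increasing and tends to infinity because $q_k\in(b_{k-1},b_k)$. Thus $E$ is multiply Xiong chaotic.

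The delicate point is the layout of $\mathcal W$, where two requirements pull against each other. To realise an arbitrary $(g_1,\dots,g_d)$ the ``multiply'' clause forces us to provide, for arbitrarily long arithmetic progressions $q,2q,\dots,dq$, independently prescribable content at \emph{all} the positions $jq$, and since there are doubly exponentially many tuples of truncated target maps we need that many candidate shifts $q$ at each scale; at the same time full Hausdorff dimension everywhere demands that the union of all windows have zero upper density from every point. The geometric spacing $q^{(\rho)}=b_{k-1}(D_k+1)^\rho$ is exactly what reconciles these: it keeps $\{jq^{(\rho)}:j\le D_k,\ \rho\le P_k\}$ sparse no matter how large $P_k$ is, provided only that the scale $b_k$ at which stage $k$ closes dwarfs $M_k,L_k,D_k$. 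Verifying that this single choice simultaneously secures pairwise disjointness, causality $M_k<b_{k-1}$, zero density, and room for every tuple — and then that the resulting $\mu$-estimates really give full dimension inside every cylinder — is the main work of the proof.
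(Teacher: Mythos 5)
Your construction is at bottom the same as the paper's: enumerate all $d$-tuples of maps on length-$M_k$ prefixes, attach to each tuple a shift $q$ and force the content of $x$ at the positions $q,2q,\dotsc,dq$, keep the forced positions at density zero for the dimension estimate, and copy $z$ along the progressions $jn_k$ for multiple proximality. The in-place formulation with a product measure and the mass distribution principle is a clean substitute for the paper's insertion map $\Delta_N$, the deletion map $\Gamma_D$ and Xiong's lemma. However, there are two genuine gaps. The first is structural: a single closed set $E$ cannot have full Hausdorff dimension \emph{everywhere}. Your own phrasing betrays this --- you prove $\dimh(E\cap[w])=1$ only for cylinders $[w]$ \emph{that meet} $E$, but many cylinders miss $E$ entirely: any word $w$ long enough to cover a control window and disagreeing there with the forced value (for instance disagreeing with $z$ on the first proximal window) gives $[w]\cap E=\emptyset$, hence $\dimh(E\cap[w])=0\neq 1$. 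Indeed a closed set of full dimension in every open set would have to be dense, hence equal to $\Sigma_N$, which $E$ is not. The missing ingredient is the paper's last step: plant a copy of the construction inside every cylinder via the prefix-replacing maps $\theta_w$ ($w$ ranging over all finite words) and take the countable union $C=\bigcup_n\theta_{w_n}(E_n)$, then check that the union is still multiply Xiong chaotic (using that the pieces are pairwise disjoint closed sets and that $\theta_w$ alters only finitely many coordinates) and still lies in $MProx(z,\sigma)$.

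The second gap is the claim that ``since $\Sigma_N$ is compact each $g_j$ is uniformly continuous, so there is $M(L)$ with $g_j(x)|_{[0,L)}$ determined by $x|_{[0,M(L))}$ for every $x\in A$.'' The set $A$ is an \emph{arbitrary} subset of $E$ (the definition of multiply Xiong chaos requires this), so $A$ need not be closed, and a map that is merely continuous on a non-closed subset of a compact space need not be uniformly continuous: take $A$ a convergent sequence without its limit point and $g_j$ oscillating on it. Without uniform determinacy there is no single tuple $(G_1^{(k)},\dotsc,G_d^{(k)})$ whose outputs agree with the $g_j(x)$ to a length $L^{(k)}\to\infty$ \emph{uniformly} in $x$, so your definition of $G_j^{(k)}$ is not well posed. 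This is repairable exactly by the paper's device $\psi_{k,j}$: for each length-$M_k$ prefix $w$ let $G_j^{(k)}(w)$ be the longest common prefix of $g_j([w]\cap A)$ (padded arbitrarily), and note that for each fixed $x\in A$ the length of this common prefix tends to infinity as $k\to\infty$ by continuity of $g_j$ at the single point $x$; the convergence $\sigma^{jq_k}x\to g_j(x)$ is then pointwise with an $x$-dependent rate, which is all the definition demands. As written, though, both the ``everywhere'' clause and the chaos verification fail, so the proposal does not yet prove the theorem.
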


The Gauss map $T\colon [0,1)\to [0,1)$ is defined by
 $T(x)=\frac{1}{x}-\bigl\lfloor \frac{1}{x}\bigr\rfloor$ for $x\in (0,1)$ and $T(0)=0$, where $ \lfloor z\rfloor$ denote  the greatest integer less than or equal to  $z$. The restriction of $ T $ on $ [0,1)\setminus\mathbb{Q} $  is called the Gauss system. 
\begin{thm}\label{main2}
In the Gauss system $([0,1)\setminus\mathbb{Q},T)$,
for every   $z$ in $ [0,1)\setminus\mathbb{Q} $, the multiply proximal 
cell of $z$ contains a multiply Xiong chaotic sets with full Hausdorff dimension everywhere.
\end{thm}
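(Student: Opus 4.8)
Parallel to Theorem \ref{main1}, the plan for the Gauss system is to transport the full--shift construction through the continued fraction coding and then to repair the Hausdorff dimension, which no longer comes for free. Write $\pi\colon\mathbb{N}^{\mathbb{N}}\to[0,1)\setminus\mathbb{Q}$, $\pi(a_1,a_2,\dotsc)=[0;a_1,a_2,\dotsc]$, for the homeomorphism conjugating the shift $\sigma$ to the Gauss map $T$, and recall that the rank-$n$ fundamental interval $I(a_1,\dotsc,a_n)$ has length comparable to $q_n^{-2}$, with $\prod_{i\le n}a_i\le q_n\le\prod_{i\le n}(a_i+1)$ and $q_n\ge 2^{(n-1)/2}$; thus agreement of the continued fractions of $x$ and $y$ in their first $L$ entries forces $|x-y|\le 2^{1-L}$, and $\log|I_n(x)|=-2\sum_{i\le n}\log a_i(x)+O(n)$. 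The essential new point, absent in the full shift, is that $\pi$ is far from bi-Lipschitz: any set with bounded partial quotients has Euclidean Hausdorff dimension below $1$, so the ``free'' coordinates that carry the chaos cannot simultaneously carry the dimension. Accordingly the coordinates of $\mathbb{N}^{\mathbb{N}}$ must be split into an abundant family devoted to dimension and two sparse families devoted, respectively, to multiple Xiong chaos and to multiple proximality with the prescribed point $z=\pi(c_1,c_2,\dotsc)$.

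I would build $E=\pi(\widetilde E)$, where $\widetilde E=\bigcap_{k\ge1}\bigcup_{s}\Delta_s$ is a Cantor scheme of cylinders in $\mathbb{N}^{\mathbb{N}}$ arranged along an extremely fast-growing sequence of stages $n_1<n_2<\dotsb$, with three kinds of coordinates at stage $k$. First, \emph{proximality coordinates}: at each of the positions $n_k,2n_k,\dotsc,d_kn_k$ (with $d_k\to\infty$) every branch is forced to display the fixed block $(c_1,\dotsc,c_{L_k})$, $L_k\to\infty$. Second, \emph{chaos coordinates}: for each $j\le d_k$ a finite block whose value on a branch through a node is $\varphi_k^{(j)}$ applied to the part of the branch built so far, where the tuples $(\varphi_k^{(1)},\dotsc,\varphi_k^{(d_k)})$ enumerate, over all $k$, every finite ``approximation rule'' --- every tuple of maps from finite initial words to finite words over $\mathbb{N}$ --- with $d_k\to\infty$; this is exactly the device behind Theorem \ref{main1}, run at $d$ times simultaneously. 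Third, \emph{dimension coordinates}: long runs of coordinates left free, distributed according to the conditionals of a measure of full Hausdorff dimension, for instance the equilibrium measure $\nu_{s_k}$ of $\{x:a_i(x)\le M_k\ \forall i\}$ with $s_k=\dimh\{x:a_i(x)\le M_k\ \forall i\}\to1$ as $M_k\to\infty$. One chooses the block lengths and, above all, the spacing of the stages so that up to each coordinate $m$ the coordinates of the first two kinds are not only $o(m)$ in number but contribute a negligible amount to $\log|I_m(x)|$ relative to the dimension coordinates; this is achieved by placing stage $k$ only after the dimension coordinates already built dwarf all the (finitely much) stage-$k$ data: $c_1,\dotsc,c_{L_k}$, the rules $\varphi_k^{(j)}$, and everything from earlier stages.

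It then remains to verify three properties. \emph{Multiple proximality}: once $d_k\ge d$, the continued fraction of any $x\in E$ agrees with that of $z$ in its first $L_k$ entries after each of the positions $jn_k$, $j\le d$, whence $\max_{1\le j\le d}|T^{jn_k}x-T^{jn_k}z|\le 2^{1-L_k}\to0$, so $E\subseteq MProx(z,T)$. \emph{Multiple Xiong chaos}: given $d$, a subset $A\subseteq E$ and continuous $g_1,\dotsc,g_d\colon A\to[0,1)\setminus\mathbb{Q}$, each $g_j$ is, to every precision, a function of a finite prefix, so along the stages $k$ with $d_k\ge d$ at which $(\varphi_k^{(1)},\dotsc,\varphi_k^{(d)})$ realises these finitely-many-prefix approximations at precision tending to infinity, the chaos coordinates yield $T^{jq_k}x\to g_j(x)$ for all $x\in A$ and all $j\le d$ with $q_k\approx n_k$; that one increasing sequence $\{q_k\}$ serves all of $A$ at once and every $d$ is the Kuratowski--Mycielski/tree bookkeeping already used in the proof of Theorem A of \cite{HLYZ17}. \emph{Dimension}: distributing the dimension coordinates by the measures $\nu_{s_k}$ produces a Borel probability measure $\mu$ on $E$ charging every cylinder of the scheme; by Billingsley's lemma, using $\log|I_n(x)|=-2\sum_{i\le n}\log a_i(x)+O(n)$ and the Gibbs property $\log\nu_{s_k}(I)\asymp s_k\log|I|$ on the stage-$k$ blocks, the local lower dimension of $\mu$ at every point is the limit of a weighted average of the $s_k$'s, hence $1$, since the first two kinds of coordinates are negligible and $s_k\to1$; as the scheme below every node is again of this form, $\dimh\bigl(E\cap B\bigr)=1$ for every ball $B$ meeting $E$. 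This dimension computation is the analogue, for our set, of the one carried out for scrambled sets of the Gauss system in \cite{Li2017}.

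The main obstacle is precisely the dimension estimate carried out on top of the proximality and chaos requirements. Separately, each ingredient is available --- the chaos device and Mycielski bookkeeping from Theorem \ref{main1} and \cite{HLYZ17}, and the fact that $\dimh\{a_i\le M\ \forall i\}\to1$ with well-behaved equilibrium measures. The difficulty is to calibrate the growth of $d_k,L_k,M_k$, the block lengths, and the stage spacing so that all three hold simultaneously: the proximality and chaos blocks must be long enough and numerous enough, and the precision high enough, to force multiple proximality for every $d$ and multiple Xiong chaos, yet must be so sparse --- and the dimension blocks must use equilibrium measures with $s_k\to1$ --- that the local scaling exponent of $\mu$, governed by sums of $\log a_i$ over all the blocks together with the $\log q_n$ terms, still tends to $1$ at every point of $E$. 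It is the simultaneous satisfaction of the three requirements, not any one of them in isolation, that is delicate.
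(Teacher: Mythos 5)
Your overall architecture coincides with the paper's: a Cantor-type scheme in $\mathbb{N}^{\mathbb{N}}$ interleaving forced blocks of the expansion of $z$ (proximality), blocks enumerating all finite approximation rules run at $d$ shifts simultaneously (the Xiong device of Theorem~\ref{main1}), and long free stretches carrying the dimension, all transported to $[0,1)\setminus\mathbb{Q}$ by the continued fraction coding; and you correctly identify the non-Lipschitz coding as the real obstacle. Where you genuinely diverge is the dimension estimate. The paper takes the free coordinates from the fixed compact set $W=\prod_{i}\{1,2,\dotsc,i\}$ and proves two things: a Jarn\'ik-type covering lower bound $\dimh\bigl(\phi\circ\theta_{i_1\dotsb i_l}(W)\cap\phi(\Sigma_k)\bigr)\geq 1-\tfrac{4}{k\ln 2}$ (Proposition~\ref{dim}), and the statement that the bijection which deletes the forced coordinates is locally $\tfrac{1}{1+\varepsilon}$-H\"older for every $\varepsilon>0$, using the zero density of the forced positions together with Lemma~\ref{DZ} on the growth of denominators and the quasi-multiplicativity of fundamental intervals (Lemma~\ref{In-AP}); Lemma~\ref{holder} then shows the forced coordinates cost nothing. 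You instead put an explicit measure on the limit set by distributing equilibrium measures of bounded-type sets with dimensions $s_k\to 1$ over the free blocks and invoke Billingsley's lemma. Both routes are viable: yours avoids the covering combinatorics of Proposition~\ref{dim} and is more conceptual, at the price of the local-dimension bookkeeping — the passage from fundamental intervals to balls, and the verification that the forced entries (which can be arbitrarily large, since $z$ is arbitrary) contribute negligibly to $\log|I_n(x)|$, which is precisely the role Lemma~\ref{DZ} plays in the paper. Your admission that calibrating $d_k$, $L_k$, $M_k$ and the stage spacing is the delicate point is fair; the paper resolves the analogous issue by the explicit rules (R1)--(R5) and the zero-density computation for $D=\bigcup_i D_i$.

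One concrete repair is needed in your proximality step. You plant the initial block $(c_1,\dotsc,c_{L_k})$ of $z$ at each of the positions $jn_k$, which yields $T^{jn_k}x\to z$ for all $j\leq d$, not $\rho(T^{jn_k}x,\,T^{jn_k}z)\to 0$; since $T^{jn_k}z$ need not return near $z$, the displayed inequality $\max_{1\leq j\leq d}|T^{jn_k}x-T^{jn_k}z|\leq 2^{1-L_k}$ does not follow from what you built. To obtain multiple proximality as defined you must plant the shifted block $(c_{jn_k+1},\dotsc,c_{jn_k+L_k})$ at position $jn_k$, so that the expansions of $T^{jn_k}x$ and of $T^{jn_k}z$ agree to depth $L_k$. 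This is a local fix that does not disturb the chaos or dimension parts of the construction.
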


The paper is organized as follows. In Section 2, we  introduce some preliminaries.   
Section 3  is devoted to proving Theorem 
~\ref{main1}.
In Section 4, Theorem~\ref{main2}  is proved 
and  we discuss the properties of the scrambled set for 
the Gauss system.

\section{Preliminaries}
In this section, we present some basic notations,  definitions
and results that will be used later.

\subsection{Upper density}
Denote $\mathbb{N}$ the set of positive integers.
For a finite subset $F$ of $\mathbb{N}$, 
denote the cardinality of $F$ by $\#(F)$.
For a subset $A$ of $\mathbb{N}$,
the upper density of $A$ is defined as 
\[\overline{D}(A)=\limsup\limits_{m\to\infty}
\tfrac{1}{m}\#(A\cap[1,m]).\]
For a strictly  increasing sequence in $\mathbb{N}$,
we can view it as an infinite subset of $\mathbb{N}$
and define the upper density of this sequence as the one of the infinite subset.

\subsection{Hausdorff dimension}
In a metric space $ (X,\rho) $, for a subset $ A $ of $ X $,  a real number  
$ \delta >0 $,  and $ s\geq 0 $, 
define   
\[
\mathcal{H}^s_{\delta}(A)=\inf\biggl\{\sum_{i\geq 1}
 \diam(U_i)  ^s: A\subset \bigcup_{i\geq 1}U_i  \text{ and }
 \diam(U_i) <\delta \text{ for any }i\geq 1  \biggr\}   
\]  
where $ \diam(\cdot)  $ denotes the diameter 
of a set.   
The $ s $-dimension Hausdorff measure of $ A $ is given by
\[
\mathcal{H}^s(A)=\lim\limits_{\delta\to 
0} \mathcal{H}^s_{\delta}(A) 
\]
and the Hausdorff dimension of  $ A $  is 
\[
\dimh(A)=\biggl\{\begin{array}{ll}  
\inf\{s> 0:\mathcal{H}^s(A)=0 \}, & \text{ if } 
\{s> 0:\mathcal{H}^s(A)=0 \}\neq \emptyset;\\
+\infty, & \text{ otherwise. }
 \end{array}\biggr.
 \]
The basic knowledge about 
Hausdorff dimension can be found in \cite{Fa2003}, which we 
refer the reader to.
We say that a subset $ C $ of $ X $ has full 
Hausdorff dimension 
everywhere if $ \dimh(C\cap U)=\dimh(X) $ for any 
non-empty open subset $ U $ of $ X $ .

Let  $ \alpha $ 
be a positive real number. We say that  a map $ 
f:X\to \mathbb{R} $ satisfies the locally $ 
\alpha $-H\"older condition if there exist a 
real number $ r>0 $ and a constant $ c>0 $ such 
that $ |f(x)-f(y)|\leq c \bigl(\rho(x,y)\bigr)^{\alpha} $ holds for  any $ x,y\in X $ with $ \rho(x,y)<r $. 

The following well known lemma 
 can be easily deduced from the definitions 
of Hausdorff dimension and the locally $ \alpha $-H\"older condition. 
 \begin{lem}\label{holder}
 Let $(X,\rho)$ be a metric space and 
 $s, \alpha >0 $ be real numbers.
 If a map $f:X\to \mathbb{R} $ satisfies the locally 
 $ \alpha $-H\"older condition, then  
 $\mathcal{H}^s(f(X))\leq c^s \mathcal{H}^{s\cdot\alpha }(X) $, where $ c $ is the constant in the definition of the locally 
 $ \alpha $-H\"older condition. 
 Moreover, $\alpha\dimh(f(X))\leq \dimh(X) $. 
\end{lem}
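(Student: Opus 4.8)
The plan is to prove the two inequalities in Lemma~\ref{holder} directly from the definitions, the first being the ``measure'' statement and the second an immediate consequence.

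First I would fix $\delta>0$ small enough that $\delta < r$, where $r$ is the radius from the locally $\alpha$-H\"older condition, and take an arbitrary cover $\{U_i\}_{i\geq 1}$ of $X$ with $\diam(U_i)<\delta$ for every $i$. Since $\delta<r$, for any two points $x,y\in U_i$ we have $\rho(x,y)<r$, hence $|f(x)-f(y)|\leq c\,(\rho(x,y))^\alpha \leq c\,(\diam(U_i))^\alpha$. Taking the supremum over $x,y\in U_i$ gives $\diam(f(U_i))\leq c\,(\diam(U_i))^\alpha$. The sets $\{f(U_i)\}_{i\geq 1}$ cover $f(X)$, and each has diameter at most $c\,\delta^\alpha$; writing $\delta' = c\,\delta^\alpha$ we get that $\{f(U_i)\}$ is an admissible cover of $f(X)$ for the computation of $\mathcal{H}^s_{\delta'}$, so
\[
\mathcal{H}^s_{\delta'}(f(X)) \leq \sum_{i\geq 1} \bigl(\diam(f(U_i))\bigr)^s \leq \sum_{i\geq 1} c^s \bigl(\diam(U_i)\bigr)^{s\alpha} = c^s \sum_{i\geq 1} \bigl(\diam(U_i)\bigr)^{s\alpha}.
\]
Now I would take the infimum over all such covers $\{U_i\}$ of $X$ to obtain $\mathcal{H}^s_{\delta'}(f(X)) \leq c^s\,\mathcal{H}^{s\alpha}_\delta(X)$. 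As $\delta\to 0$ we also have $\delta'=c\delta^\alpha\to 0$, so letting $\delta\to 0$ on both sides and using the definition of Hausdorff measure as the increasing limit yields $\mathcal{H}^s(f(X)) \leq c^s\,\mathcal{H}^{s\alpha}(X)$, which is the first claim.

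For the dimension statement, suppose $t>\dimh(X)$. Pick any $s>0$ with $s\alpha = t$, i.e. $s = t/\alpha$. Then $\mathcal{H}^{s\alpha}(X) = \mathcal{H}^{t}(X) = 0$ by definition of $\dimh(X)$ (here I use $t>\dimh(X)$), so the inequality just proved gives $\mathcal{H}^s(f(X)) \leq c^s\cdot 0 = 0$, hence $\dimh(f(X)) \leq s = t/\alpha$, i.e. $\alpha\dimh(f(X)) \leq t$. Since this holds for every $t>\dimh(X)$, taking the infimum over such $t$ gives $\alpha\dimh(f(X)) \leq \dimh(X)$, as required. (If $\dimh(X)=+\infty$ the inequality is trivial.)

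The argument is entirely routine; the only point that needs a little care is bookkeeping the way the threshold $\delta$ transforms to $c\delta^\alpha$ under $f$ and confirming that both thresholds tend to $0$ together, so that the limit defining $\mathcal{H}^s$ can legitimately be taken. There is no real obstacle here — this is a standard fact about H\"older maps and Hausdorff measure, recorded for later use.
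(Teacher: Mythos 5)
Your proof is correct and is precisely the standard argument the paper alludes to when it says the lemma ``can be easily deduced from the definitions'' (the paper itself supplies no proof): push a $\delta$-cover forward through $f$, use the H\"older bound to control the image diameters, pass from $\mathcal{H}^{s}_{c\delta^{\alpha}}(f(X))\leq c^{s}\mathcal{H}^{s\alpha}_{\delta}(X)$ to the limit, and then read off the dimension inequality. The bookkeeping of the threshold $\delta\mapsto c\delta^{\alpha}$ and the use of the paper's particular definition of $\dimh$ are both handled correctly.
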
 

\subsection{Ergodic theory}  
For a probability space $ (X,\mathcal{B},\mu) $, 
a transformation $ T\colon X\to X $ is 
called measure-preserving if $T^{-1}(\mathcal{B})\subset \mathcal{B}$ and $ \mu(A)=\mu(T^{-1}(A))$ for any $A $ in the
 $ \sigma $-algebra $ \mathcal{B} $. 
In this case, the quadruple $(X,\mathcal{B},\mu,T)$ is called a measure-preserving system.
A measure-preserving transformation $(X,\mathcal{B},\mu,T)$
is called ergodic if the only members $ A $ of $ 
\mathcal{B} $  with $ T^{-1}(A)=A $ satisfies $ 
\mu(A)=0 $ or $\mu(A)=1 $, weakly mixing if $ (X\times X, \mathcal{B}\times \mathcal{B}, 
\mu\times \mu ,T\times T) $ is ergodic, and strongly 
mixing if $ \lim\limits_{n\to\infty} 
m(T^{-n}(A)\cap B)=m(A)m(B) $ for any $ A,B\in\mathcal{B} $.  
It is obviously that every 
strongly mixing transformation is weakly mixing 
and every weakly mixing transformation is ergodic.

We say that  a measure-preserving system  
$(X,\mathcal{B},\mu,T)$ is exact
if for each $B$ in the tail $\sigma$-algebra  
$\bigcap_{n\in\mathbb{N}}
T^{-n}(\mathcal{B})$, either $\mu(B)$ or $\mu(X\setminus B)$  is zero.
The exactness was introduced by Rokhlin in 
\cite{R61} and he obtained  the following result.
\begin{prop}[\cite{R61}] \label{prop:exact}
\begin{enumerate}
 \item If a measure-preserving system  $(X,\mathcal{B},m,T)$ is exact, then it is strongly mixing.
 \item A  measure-preserving system $(X,\mathcal{B},m,T)$ is exact  if and only if
 $\lim_{n\to\infty} m(T^nA)=1$ for every $A\in\mathcal{B}$
 with $m(A)>0$ and $T^nA\in\mathcal{B}$ for every 
 $n\geq 1$.
\end{enumerate}
\end{prop}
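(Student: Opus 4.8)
The plan is to run everything through the decreasing filtration $\mathcal{B}_n:=T^{-n}(\mathcal{B})$, whose intersection $\bigcap_{n\geq 1}\mathcal{B}_n$ is exactly the tail $\sigma$-algebra; by definition exactness of $(X,\mathcal{B},m,T)$ means precisely that this intersection is trivial mod $m$. The one external input I would invoke is the reverse (decreasing) martingale convergence theorem: for every $f\in L^1(m)$, $E(f\mid\mathcal{B}_n)\to E\bigl(f\mid\bigcap_{k}\mathcal{B}_k\bigr)$ both $m$-almost everywhere and in $L^1(m)$, and when the system is exact this limit is the constant $\int f\,dm$.

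For item (1), I would fix $A,B\in\mathcal{B}$ and use $T^{-n}B\in\mathcal{B}_n$ to write
\[
m(A\cap T^{-n}B)=\int \mathbf{1}_A\,\mathbf{1}_{T^{-n}B}\,dm=\int E(\mathbf{1}_A\mid\mathcal{B}_n)\,\mathbf{1}_{T^{-n}B}\,dm .
\]
Writing $E(\mathbf{1}_A\mid\mathcal{B}_n)=m(A)+r_n$ with $\|r_n\|_{L^1}\to0$ (martingale theorem plus exactness), this becomes $m(A)\,m(T^{-n}B)+\int r_n\,\mathbf{1}_{T^{-n}B}\,dm$, whose first term is $m(A)m(B)$ by $T$-invariance and whose second term has absolute value at most $\|r_n\|_{L^1}$. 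Hence $m(A\cap T^{-n}B)\to m(A)m(B)$, i.e.\ $T$ is strongly mixing.

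For the forward implication of item (2), I would take $A\in\mathcal{B}$ with $m(A)>0$ and $T^nA\in\mathcal{B}$ for all $n$. Then $T^{-n}(T^nA)\in\mathcal{B}_n$ contains $A$, so its complement (also in $\mathcal{B}_n$) is disjoint from $A$ up to a null set; hence $E(\mathbf{1}_A\mid\mathcal{B}_n)=0$ a.e.\ off $T^{-n}(T^nA)$, i.e.\ $\{E(\mathbf{1}_A\mid\mathcal{B}_n)>0\}\subseteq T^{-n}(T^nA)$ mod null. By the martingale theorem and exactness $E(\mathbf{1}_A\mid\mathcal{B}_n)\to m(A)>0$ a.e., so $\mathbf{1}_{\{E(\mathbf{1}_A\mid\mathcal{B}_n)>0\}}\to1$ a.e., and dominated convergence gives $m\bigl(\{E(\mathbf{1}_A\mid\mathcal{B}_n)>0\}\bigr)\to1$; therefore $m(T^nA)=m\bigl(T^{-n}(T^nA)\bigr)\to1$. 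For the converse, I would assume the limit condition and take a tail set $B$ with $m(B)>0$; writing $B=T^{-n}(B_n)$ with $B_n\in\mathcal{B}$ we get $T^nB\subseteq B_n$, so $T^{-n}(T^nB)\subseteq T^{-n}(B_n)=B$, and with the trivial reverse inclusion $T^{-n}(T^nB)=B$. Granting $T^nB\in\mathcal{B}$ for all $n$ (see below), the hypothesis applies with $A=B$ and forces $m(T^nB)\to1$, while $m(T^nB)=m\bigl(T^{-n}(T^nB)\bigr)=m(B)$ by invariance; hence $m(B)=1$, the tail $\sigma$-algebra is trivial, and the system is exact.

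The conceptual heart of all three parts is the reverse martingale convergence theorem applied to the tail filtration. The one place I expect to need care — and the main obstacle in a fully self-contained write-up — is the bookkeeping with forward images $T^nA$ in item (2): since $T$ need not carry $\mathcal{B}$ into $\mathcal{B}$, one must work inside the class of sets whose forward iterates are measurable (exactly the restriction built into the hypothesis of (2)), and identities such as $m(T^nB)=m(B)$ tacitly use that $T^nB\in\mathcal{B}$. This holds automatically when $T$ is onto mod $m$, as it is for the full shift and the Gauss map treated here, so no extra assumption is required in those cases.
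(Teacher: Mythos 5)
The paper does not prove this proposition at all: it is quoted from Rokhlin \cite{R61} and used as a black box, so there is no in-paper argument to compare against. Your proof via the decreasing filtration $\mathcal{B}_n=T^{-n}(\mathcal{B})$ and the reverse martingale convergence theorem is the standard route and is correct. Item (1) is clean: the identity $m(A\cap T^{-n}B)=\int E(\mathbf{1}_A\mid\mathcal{B}_n)\,\mathbf{1}_{T^{-n}B}\,dm$ together with $\|E(\mathbf{1}_A\mid\mathcal{B}_n)-m(A)\|_{L^1}\to 0$ gives strong mixing (the paper's convention $m(T^{-n}A\cap B)\to m(A)m(B)$ is the same statement with the roles of $A$ and $B$ swapped). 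In item (2), the forward direction is right: $\{E(\mathbf{1}_A\mid\mathcal{B}_n)>0\}\subseteq T^{-n}(T^nA)$ mod null, the a.e.\ limit $m(A)>0$ forces $m(T^{-n}(T^nA))\to 1$, and measure preservation transfers this to $m(T^nA)$; the converse via $T^{-n}(T^nB)=B$ for a tail set $B$ is also right. The one genuine issue is the one you flag yourself: the converse needs $T^nB\in\mathcal{B}$ for tail sets $B$, which is not automatic for a general measure-preserving map, and surjectivity alone does not give it (forward images of Borel sets are in general only analytic). In Rokhlin's setting of Lebesgue spaces with complete measures this is harmless since analytic sets are measurable, and the paper sidesteps the issue in its actual application by proving separately (Lemma~\ref{borel}) that the Gauss map sends Borel sets to Borel sets before invoking part (2). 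So your write-up is sound for the systems at hand, and your identification of the forward-image measurability as the only delicate point matches exactly where the paper itself inserts an auxiliary lemma.
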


\section{multiply Xiong chaotic sets in full 
shift over finite symbols}

In this section, we construct a multiply
Xiong chaotic set with full Hausdorff dimension 
everywhere for the full shift over finite symbols, 
which proves Theorem~\ref{main1}.

\subsection{The full shift over finite symbols}
Let $N$ be an integer with $N\geq 2$. We endow the finite 
set $\{1,2,\dotsc,N\}$ with the discrete topology  and  
the product space
$ \Sigma_N=\prod_{n=1}^\infty\{1,2,\dotsc,N\} $
is compact and metrizable. 
A compatible metric $\rho$ on $\Sigma_N$ can be 
defined as follow.
For any $x=x_1x_2\dotsb$ and
$y=y_1y_2\dotsb\in\Sigma_N$,
\[
\rho(x,y)=\begin{cases}
0,& \text{ if }x=y,\\
\frac{1}{N^k}, &\text{ if }x\neq y \text{ and }
k=\min\{n\in\mathbb{N}\colon x_n\neq y_n\}-1. 
\end{cases}
\]
The shift map $\sigma\colon\Sigma_N\to\Sigma_N$ 
is defined as 
$\sigma(x)=x_2x_3\dotsb$ for any 
$x=x_1x_2\dotsb\in\Sigma_N$.
It is clear that $\sigma$ is continuous.
The dynamical system $(\Sigma_N,\sigma)$ is 
called the full shift over $ N $ symbols.

Let $ n\geq 1 $.
Each element in  $\{1,2,\dotsc,N\}^n$ is called a word  
with length $n$ and denote the set consisting of all the words by $ \mathcal{N}^*$.  
For a point $x=x_1x_2\dotsb  \in\Sigma_N$,
denote its prefix with length $n$ by 
$x[1,n]=x_1x_2\dotsb x_n\in \{1,2,\dotsc,N\}^n$.
We also use $ x[n] $ to represent the number at 
the $ n $-position of $ x $ for any $ n\geq 1 $. 
 We  say that the number $ n $ is the position of the word $ x[n,m] $ appeared in the point $ x $ for any $ m\geq n $.
The cylinder generated by the word $x_1x_2\dotsb 
x_n$ is the set consisting of the points with the same word $x_1x_2\dotsb  x_n$ as its prefix and denote it by  $ [x_1x_2\dotsb  x_n] = \{y\in\Sigma_{N}: 
y_i=x_i \text{ for } i=1,2,\dotsc , n \} $. 
Clearly, a cylinder is both open and closed. 
For $u=u_1u_2\dotsb u_n$ and $v=v_1v_2\dotsb v_m$,
denote by $ uv  $ or  $ u\sqcup v $
the concatenation of $u$ and $v$, that is 
$ uv=u\sqcup v =u_1u_2\dotsb u_nv_1v_2\dotsb v_m $. Since $ u\sqcup v $ may be different from $ v\sqcup u $, we require that the  symbol ``$ \bigsqcup_{1\leq i\leq n} u_i $ '' means $ u_1\sqcup u_2\sqcup\dotsb \sqcup u_n $. 
It is not hard to show that with the metric $ \rho $, any cylinder has full Hausdorff dimension, which is equal to $ 1 $.

The following result was proved in \cite[Lemma 3]{X95},
which is a key estimation of  the Hausdorff dimension of a subset of $\Sigma_N$.
\begin{lem}\label{weishu}
Let $A=\{a_n\}_{n=1}^\infty$ be a sequence of strictly increasing positive numbers.
Define a map $ \Gamma_{A}:\Sigma_{N}\to\Sigma_{N} $, 
$ \Gamma_{A}(x)=x_1x_2\dotsc  x_{a_1-1}x_{a_1+1}\dotsc  
x_{a_n-1}x_{a_n+1}\dotsc $.
Let $Y$ be a subset of $\Sigma_N$.
If the upper density of $A$ is  $\lambda$
and $\mathcal{H}^1(\Gamma_{A}(Y))>0$, 
then $ \dimh(Y)\geq 1-\lambda$. 
\end{lem}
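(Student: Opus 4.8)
The plan is to reduce the lemma to the single inequality
\[
\mathcal{H}^1(\Gamma_A(Y))\le N^{s}\,\mathcal{H}^{s}(Y)\qquad\text{for every }s\text{ with }0<s<1-\lambda .
\]
Granting this, the hypothesis $\mathcal{H}^1(\Gamma_A(Y))>0$ forces $\mathcal{H}^{s}(Y)>0$ for all such $s$, hence $\dimh(Y)\ge 1-\lambda$ by the definition of Hausdorff dimension; we may of course assume $\lambda<1$, since otherwise there is nothing to prove.

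To prove the inequality I would first use that $\rho$ is an ultrametric: every subset of $\Sigma_N$ of diameter at most $N^{-m}$ is contained in a cylinder of length $m$, and a cylinder of length $m$ has diameter exactly $N^{-m}$. Consequently, in computing $\mathcal{H}^{s}_{\delta}(Y)$ it is enough to use covers of $Y$ by cylinders: given any cover $\{U_i\}$ of $Y$ with all $\diam(U_i)<\delta$, replace each $U_i$ of positive diameter by a cylinder $[w^{(i)}]\supseteq U_i$ of length $m_i:=\lfloor\log_N(1/\diam(U_i))\rfloor$, so that $\diam(U_i)\le N^{-m_i}<N\,\diam(U_i)$; the (at most countably many) singleton members of the cover are replaced by cylinders of length so large that their total contribution is as small as we please. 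If $\delta$ is small then every $m_i$ is large.

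The crux is the behaviour of $\Gamma_A$ on cylinders: if $w$ has length $m$, then $\Gamma_A([w])\subseteq[\widetilde{w}]$, where $\widetilde{w}$ is obtained from $w$ by deleting the letters whose positions lie in $A$; thus $\widetilde{w}$ has length $m-k$ with $k:=\#(A\cap[1,m])$ and $\diam([\widetilde{w}])=N^{k}N^{-m}$. Since $Y\subseteq\bigcup_i[w^{(i)}]$, the cylinders $\{[\widetilde{w}^{(i)}]\}$ cover $\Gamma_A(Y)$. Now the density hypothesis enters: fix $\varepsilon>0$ with $\lambda+\varepsilon<1-s$; since $\overline{D}(A)=\lambda$ there is $M$ with $\#(A\cap[1,m])\le(\lambda+\varepsilon)m$ for all $m\ge M$. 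Choosing $\delta$ small enough that every $m_i\ge M$ and writing $k_i=\#(A\cap[1,m_i])$, for the non-degenerate $U_i$ we obtain
\[
\diam([\widetilde{w}^{(i)}])=N^{k_i-m_i}\le N^{-(1-\lambda-\varepsilon)m_i}\le N^{-s m_i}=(N^{-m_i})^{s}\le N^{s}\,\diam(U_i)^{s},
\]
where the two middle inequalities use $k_i\le(\lambda+\varepsilon)m_i$ and $s<1-\lambda-\varepsilon$. Summing over $i$ and taking the infimum over all covers of $Y$ of mesh $<\delta$ gives $\mathcal{H}^1_{\delta'}(\Gamma_A(Y))\le N^{s}\,\mathcal{H}^{s}_{\delta}(Y)$, where the mesh of the induced cover of $\Gamma_A(Y)$ satisfies $\delta'\le N^{1-\lambda-\varepsilon}\delta^{\,1-\lambda-\varepsilon}\to 0$ as $\delta\to 0$, again by the density bound; letting $\delta\to 0$ yields the claimed inequality.

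Once the cylinder-image identity is in hand the argument is essentially bookkeeping, and the only step where the hypotheses are genuinely used is the displayed chain of inequalities: there the diameter inflation factor $N^{k_i}$ produced by $\Gamma_A$ has to be absorbed by the exponent loss $(N^{-m_i})^{s}$, and this works precisely because $s<1-\lambda$ while, by upper density, $k_i$ is eventually at most $(\lambda+\varepsilon)m_i$. The remaining points — that the induced cover of $\Gamma_A(Y)$ has mesh tending to $0$, and the disposal of the singleton members of the cover — are routine and are both handled by the same density estimate.
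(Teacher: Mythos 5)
Your argument is correct. Note that the paper itself gives no proof of this lemma (it is quoted from \cite{X95}*{Lemma 3}), so there is nothing to compare against line by line; your covering argument is the standard one and all the steps check out: the ultrametric reduction to cylinder covers is legitimate because replacing $U_i$ by the smallest cylinder containing it does not increase any term $\diam(U_i)^s$, the identity $\Gamma_A([w])\subseteq[\widetilde w]$ with $\diam([\widetilde w])=N^{k}N^{-m}$ is exactly right, and the displayed chain of inequalities correctly absorbs the inflation factor $N^{k_i}$ using $k_i\le(\lambda+\varepsilon)m_i$ and $s<1-\lambda-\varepsilon$. In effect you have shown that $\Gamma_A$ satisfies the locally $(1-\lambda-\varepsilon)$-H\"older condition with constant $1$ at all sufficiently small scales, so the same conclusion could be reached slightly faster by invoking the mechanism of the paper's Lemma~\ref{holder} (giving $\mathcal{H}^1(\Gamma_A(Y))\le\mathcal{H}^{1-\lambda-\varepsilon}(Y)$ and then letting $\varepsilon\to 0$); your unpacked version is equivalent and complete.
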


\subsection{Proof of Theorem~\ref{main1}}
Now we construct a multiply Xiong chaos set step by step.
Note that the main idea comes from \cite{X95}, but we should amend the method to multiply Xiong chaotic case. 
We first define a map $\Delta_N\colon\Sigma_N\to\Sigma_N$ to construct a subset, then we show that this subset meets the requirement. 

Fix a point $z=z_1z_2\dotsc\in\Sigma_N$. 
For each $ n\in\mathbb{N} $, let 
 $l_n=(N^n)^{N^n} $. We 
list all the self-maps  on $\{1,2,\dotsc ,N\}^n $  as 
$ \varphi_{1}^{(n)},\varphi_2^{(n)},\dotsc, \varphi_{l_n}^{(n)} $.  
Fix a point  $ x=x_1x_2\dotsb $ in $ \Sigma_{N}$.
To define $\Delta_N(x)$, we first use these maps $\varphi_i^{(n)}$ 
to construct a series of words functioning as the chaotic part. 
For any integer  $1\leq i\leq {l_n}^{l_n}  $ and 
$ 1\leq j\leq l_n-1 $, we  choose a word $ U_{i,j}^{(n)} $ in $ \mathcal{N}^* $ that  will be specialized later.
We list the elements in $\{1,2,\dotsc,l_n\}^{l_n}$
as $\mathbf{p}_i^{(n)}=\bigl(p_{i,1}^{(n)},p_{i,2}^{(n)},\dotsb,p_{i,l_n}^{(n)}\bigr)$, $1\leq i\leq {l_n}^{l_n}  $. 

First, we deal with the case when $ k=1 $.  
Define a word as follow, 
\[
\Phi^{(1)}(x_1)= \bigsqcup_{1\leq i\leq {l_1}^{l_1}  } 
\varphi_{p_{i,1}^{(1)}}^{(1)}(x_1)U_{i,1}^{(1)}
\varphi_{p_{i,2}^{(1)}}^{(1)}(x_1)U_{i,2}^{(1)}\dotsc U_{i,l_1-1}^{(1)}
 \varphi_{p_{i,l_1}^{(1)}}^{(1)}(x_1). 
 \]
Second, when $ k=2 $, define a word  
\[
\Phi^{(2)}(x_1x_2)= \Phi^{(1)}(x_1) \sqcup  
\bigsqcup_{1\leq i\leq {l_2}^{l_2}  } 
\varphi_{p_{i,1}^{(2)}}^{(2)}(x_1x_2)U_{i,1}^{(2)}
\varphi_{p_{i,2}^{(2)}}^{(2)}(x_1x_2)U_{i,2}^{(2)}\dotsc
U_{i,l_2-1}^{(2)}
\varphi_{p_{i,l_2}^{(2)}}^{(2)}(x_1x_2) .
\]
Continuing this process, for each $n\in\mathbb{N}$, we can define  a word   
\begin{align*}
\Phi^{(n)}(x_1x_2\dotsb x_n)= &
\Phi^{(n-1)}(x_1x_2\dotsb x_{n-1})   \sqcup 
\bigsqcup_{1\leq  i\leq  {l_n}^{l_n}  } 
\Bigl(\varphi_{p_{i,1}^{(n)}}^{(n)}(x_1x_2\dotsb 
x_n)U_{i,1}^{(n)}  \\
& \sqcup \varphi_{p_{i,2}^{(n)}}^{(n)}(x_1x_2\dotsb 
x_n)U_{i,2}^{(n)}\dotsc U_{i,l_{n}-1}^{(n)}
\varphi_{p_{i,l_n}^{(n)}}^{(n)}(x_1x_2\dotsb x_n)\Bigr).
\end{align*}  
Denote the length of $ \Phi^{(n)}(x_1x_2\dotsb x_n)  $ by  $ c_n$.  
  
Next, we start to define another series of 
words in order to meet the requirement of the  proximal part. Set $ s_n=(l_n)^{l_n+1} $  for 
any $ n\geq 1 $.  
When $ k=1 $,   define a word 
\[
\Delta_{N}^{(1)}(x)=V_{1,1}z[s_1+1]V_{1,2}z[1]
\Phi^{(1)}(x_1).
\]
where $ V_{1,1} $ and $ 
V_{1,2} $ are two words that will be defined later. 
We determine the word $\Delta_{N}^{(1)}(x)$ by the following rules:
\begin{enumerate}
 \item the length  of  $ V_{1,1}  $ is equal to $ s_1 $;
 \item the length of $  V_{1,2} $ is equal to $ s_1-1 $;
 \item we choose 
 a proper length of each $ U_{i,j}^{(1)} $ such that  $ 
 r_{i,j}^{(1)}-1=j(r_{i,1}^{(1)} -1) $  
 for $  1\leq  i\leq {l_1}^{l_1} $ 
 and  $ 1\leq j\leq l_1-1 $,
 where  $ 
 r_{i,j}^{(1)}=r_{i,j}^{(1)}(s_1) $ is the 
 position of the word 
 $ \varphi_{p_{i,j}^{(1)}}^{(1)}(x_1) $ appeared in 
 $ \Delta_N^{(1) }(x) $.
 \item we specialize the words $V_{1,j}$ and $U_{i,j}^{(1)}$ such that
 the word 
 \[Q_1=  V_{1,1}V_{1,2}\sqcup \Biggl(\bigsqcup_{1\leq i\leq l_1^{l_1}}\bigsqcup_{1\leq j\leq l_1-1} U_{i,j}^{(1)} \Biggr) \]
  is an initial segment of $x$, that is $Q_1=x[1,q_1]$, where $q_1$ is the length of $Q_1$. 
\end{enumerate}

Let $ t_1=s_1+2 $.  Denote by  
$ D_1=\{a_1^{(1)}<a_2^{(1)}<\dotsb< a_{t_1}^{(1)}\} $ 
the set of the positions of $ z[s_1+1] $,  $ z[1] $, and 
 $ \varphi_{p_{i,1}^{(1)}}^{(1)}(x_1) $,
$  \varphi_{p_{i,2}^{(1)}}^{(1)}(x_1)  $,  $ \dotsc  $, 
$ \varphi_{p_{i,l_1}^{(1)}}^{(1)}(x_1) $ for $ 1\leq i\leq {l_1}^{l_1} $ appeared in $ \Delta_N^{(1)}(x) $. 
Note that  the position of $ z[s_1+1] $ appeared 
in $ \Delta_{N}^{(1)}(x_1) $ is $ s_1+1 $ and the 
position of $ z[1] $ appeared in $ 
\Delta_{N}^{(1)}(x_1) $ is $ 2s_1+1 $.  We estimate the density of the set $ D_1 $ as follow. 
\begin{enumerate}
\item[(i)] If an integer $ m $ satisfies $ s_1+1\leq m\leq 
2s_1+1 $, 
then 
\[ \frac{ \#\{j\leq m: j\in D_1 \}  }{m }<\frac{2}{m 
}<\frac{2}{s_1+1}\leq 1 . \]

\item[(ii)] If $ 2s_1+1<m\leq c_1+2s_1+1  $, then
\[ \frac{ \#\{j\leq m: j\in D_1\}  }{m}<\frac{t_1}{2s_1+1}<1 .\]
\end{enumerate}

When $ k=2 $,   
define a word 
\[
\Delta_{N}^{(2)}(x)=\Delta_N^{(1)}(x) 
V_{2,1}z[A_1^{(2)}+1,A_1^{(2)}+2]V_{2,2} 
z[A_1^{(2)}+1,A_1^{(2)}+2] 
V_{2,3} z[1,2]
V_{2,4}z[1,2]\Phi^{(2)}(x_1x_2) 
\]
where $A_i^{(2)}$ is a positive integer and $ V_{2,i} $ is a word for $ 1\leq i\leq 4 $, which will be defined later.
We determine the word $\Delta_{N}^{(2)}(x)$ by the following rules:
\begin{enumerate}
\item $ A_1^{(2)}>2(t_1+8) $ and $ A_4^{(2)}>t_1+t_2-3 $, where $ t_2=2s_2+8 $;
 \item the length  of $ \Delta_{N}^{(1)}(x)V_{2,1} $ is equal to  $ A_1^{(2)} $ and the length of 
 $ V_{2,i}  $ is  $ A_i^{(2)} $ for $  2\leq i\leq 4 $; 
 \item  $2+A_2^{(2)}=A_1^{(2)}$ and $A_1^{(2)}+2+A_2^{(2)}+2+A_3^{(2)}=2+A_4^{(2)}$;

 \item  we 
 choose  a proper length of each $U_{i,j}^{(2)} $ such that  
 $r_{i,j}^{(2)}-1=j(r_{i,1}^{(2)}-1) $  
 for $  1\leq  i\leq {l_2}^{l_2} $ 
 and  $ 1\leq j\leq l_2-1 $,
where $r_{i,j}^{(2)}=r_{i,j}^{(2)}(A_1^{(2)},A_4^{(2)})$ 
 is the position of the word 
 $ \varphi_{p_{i,j}^{(2)}}^{(2)}(x_1x_2) $ appeared 
 in  $ \Delta_N^{(2) }(x) $;
 \item we specialize the words $V_{2,j}$ and $U_{i,j}^{(2)}$ such that 
 the word  
 \[Q_2=Q_1\sqcup V_{2,1}V_{2,2}V_{2,3}V_{2,4}\sqcup \Biggl(\bigsqcup_{1\leq i\leq l_2^{l_2}}\bigsqcup_{1\leq j\leq l_2-1} U_{i,j}^{(2)} \Biggr) \] 
 is an initial segment of $x$.
 
\end{enumerate}
We denote by  
$ D_2=\{a_1^{(2)}<a_2^{(2)}<\dotsb< a_{t_2}^{(2)}\} $ 
the set of the positions of 
$ z[A_1^{(2)}+1] $, $ z[A_1^{(2)}+2] $,  
$ z[1] $, $ z[2] $    
$ \varphi_{p_{i,1}^{(2)}}^{(2)}(x_1x_2)[1]$, $ \varphi_{p_{i,1}^{(2)}}^{(2)}(x_1x_2)[2]$
$ \varphi_{p_{i,2}^{(2)}}^{(2)}(x_1x_2)[1] $, $ \varphi_{p_{i,2}^{(2)}}^{(2)}(x_1x_2)[2] $,  $ \dotsc $ , 
$\varphi_{p_{i,l_2}^{(2)}}^{(2)}(x_1x_2)[1] $, 	$\varphi_{p_{i,l_2}^{(2)}}^{(2)}(x_1x_2)[2] $ for $ 1\leq i\leq l_2^{l_2} $ appeared in 
$ \Delta_N^{(2)}(x) $.  
We estimate the density of set $ D_1\cup D_2 $ as follow.
\begin{enumerate}
\item[(i)] If $ c_1+2s_1+1<m\leq A_1^{(2)}  $, then 
\[ \frac{ \#\{j\leq m: j\in D_1\cup D_2\}  }{m 
}<\frac{t_1}{2s_1+1+c_1}<1. \]

\item[(ii)] If $ A_1^{(2)}<m\leq 
A_1^{(2)}+2+A_2^{(2)}+2+A_3^{(2)}+2+A_4^{(2)}+2 
$, then \[ \frac{ \#\{j\leq m: j\in D_1\cup D_2\}  }{m}<\frac{t_1+8}{A_1^{(2)} }<\frac{1}{2} . \]
          
\item[(iii)] If   $ A_1^{(2)}+2+A_2^{(2)}+2+A_3^{(2)}+2+A_4^{(2)}+2< 
m\leq A_1^{(2)}+2+A_2^{(2)}+2+A_3^{(2)}+2+A_4^{(2)}+c_2 $, then 
\[ \frac{ \#\{j\leq m: j\in D_1\cup  D_2 \}  }{m 
}<\frac{t_1+t_2}{A_1^{(2)}+2+A_2^{(2)}+2+A_3^{(2)}
+2+A_4^{(2)}+2 }<\frac{1}{2}. \]
\end{enumerate}

Assume that we have completed the construction in the case $ k=n-1 $. We carry on  the case  $ k=n $ as follows.
Define a word 
\begin{align*}
\Delta_{N}^{(n)}(x)=&\Delta_N^{(n-1)}(x) 
V_{n,1}z[A_1^{(n)}+1,A_1^{(n)}+n]V_{n,2} 
z[A_1^{(n)}+1,A_1^{(n)}+n] \sqcup\dotsb \\
&\sqcup 
V_{n,n} z[A_1^{(n)}+1,A_1^{(n)}+n] \sqcup 
V_{n,n+1} z[1,n]
V_{n,n+2}z[1,n]\sqcup\dotsb\sqcup V_{n,2n}z[1,n]\\
&\sqcup \Phi^{(n)}(x_1x_2\dotsb x_n) 
\end{align*} 
where  $A_i^{(n)} $ is a positive integer and  $ V_{n,i} $ is a word for $ 1\leq i\leq 2n $, which will be defined later. 
We determine the word $\Delta_{N}^{(n)}(x)$ by the following rules:
\begin{enumerate}
\item[(R1)]  $ 
A_1^{(n)}>n(t_{n-1}+2n^2) $ 
and $ A_{2n}^{(n)}>t_{n-1}+t_n-(n+1) $, where $ t_n=n\cdot s_n+2n^2 $;
\item [(R2)] the length of $ \Delta_{N}^{(1)}(x)V_{n,1} $ is equal to  $ A_1^{(n)} $ and the length of  
 $ V_{n,i}  $ is  $ A_i^{(n)} $ for $2\leq i\leq 2n$; 
 
\item[(R3)] $A_1^{(n)}=n+A_2^{(n)}=\dotsb=n+A_n^{(n)}$ and $
 A_1^{(n)}+A_2^{(n)}+\dotsb+A_n^{(n)}+n^2+A_{n+1}^{(n)}
 =n+A_{n+2}^{(n)}=n+A_{n+3}^{(n)}
 =\dotsb=n+A_{2n}^{(n)}$;
 
\item[(R4)] we 
 choose  a proper length of each $ 
 U_{i,j}^{(n)} $such that  $ 
 r_{i,j}^{(n)}-1=j(r_{i,1}^{(n)} -1) $  
 for $  1\leq  i\leq {l_n}^{l_n} $ 
 and  $ 1\leq j\leq l_n-1 $,
where $
 r_{i,j}^{(n)}=r_{i,j}^{(n)}(A_1^{(n)},A_{2n}^{(n)})
 $ is the position of the word 
 $ \varphi_{p_{i,j}^{(n)}}^{(n)}(x_1x_2\dotsb x_n) $ 
 appeared  in    $ \Delta_N^{(n) }(x) $;
 
 \item [(R5)] we specialize the words $V_{n,j}$ and $U_{i,j}^{(n)}$ such that 
 the word 
 \[ Q_n=Q_{n-1}\sqcup   \bigsqcup_{j=1}^{2n}V_{n,j} \sqcup  \Biggl(\bigsqcup_{1\leq i\leq l_n^{l_n}}\bigsqcup_{1\leq j\leq l_n-1} U_{i,j}^{(n)} \Biggr)  \]
   is an initial segment of $x$.
\end{enumerate}
We denote by  $ D_n=\{a_1^{(n)}<a_2^{(n)}<\dotsb< a_{t_n}^{(n)}\} 
$ the set of the 
positions of $ z[A_1^{(n)}+1] $, $ z[A_1^{(n)}+2] 
$, $ \dotsc  $, $ z[A_1^{(n)}+n] $,   $ 
z[1] $, $ z[2] $,$ \dotsc $, $ z[n] $  and 
$ \varphi_{p_{i,1}^{(n)}}^{(n)}(x_1x_2\dotsb x_n)[1]$, $ \varphi_{p_{i,1}^{(n)}}^{(n)}(x_1x_2\dotsb x_n)[2]$, $ \dotsc $, $ \varphi_{p_{i,1}^{(n)}}^{(n)}(x_1x_2\dotsb x_n)[n]$,
$ \varphi_{p_{i,2}^{(n)}}^{(n)}(x_1x_2\dotsb x_n)[1] $,
$ \varphi_{p_{i,2}^{(n)}}^{(n)}(x_1x_2\dotsb x_n)[2] $,
$ \dotsc $, $ \varphi_{p_{i,2}^{(n)}}^{(n)}(x_1x_2\dotsb x_n)[n] $,	$ 	\dotsc $, $\varphi_{p_{i,l_n}^{(n)}}^{(n)}(x_1x_2\dotsb x_n)[1] $, $\varphi_{p_{i,l_n}^{(n)}}^{(n)}(x_1x_2\dotsb x_n)[2] $, $ \dotsc $, $\varphi_{p_{i,l_n}^{(n)}}^{(n)}(x_1x_2\dotsb x_n)[n] $  for $ 1\leq i\leq {l_n}^{l_n} $
appeared in $ \Delta_N^{(n)}(x) $.    
We estimate the density of set $ \bigcup_{i=1}^{n}D_i  $ as follow.
\begin{enumerate}
\item [(i)] If $1\leq m\leq A_1^{(n)}  $, then 
\[ \frac{ \#\{j\leq m: j\in  \bigcup_{i=1}^n 
D_i\}  }{m }\leq \frac{1}{n-1} . \]

\item [(ii)] If $ A_1^{(n)}<m\leq 
A_1^{(n)}+A_2^{(n)}+\dotsb+A_{2n}^{(n)}+2n^2 $, then \[
\frac{ \#\{j\leq m: j\in \bigcup_{i=1}^n D_i\} \}  }{m }<\frac{t_{n-1}+2n^2 }{A_1^{(n)} }<\frac{1}{n} . \]

\item [(iii)] If   $ 
A_1^{(n)}+A_2^{(n)}+\dotsb+A_{2n}^{(n)}+2n^2< m\leq A_1^{(n)}+A_2^{(n)}+\dotsb+A_{2n}^{(n)}+2n^2+c_n $, then \[
\frac{ \#\{j\leq m: j\in \bigcup_{i=1}^n D_i\}  
\}}{m}<\frac{t_{n-1}+t_{n}}{A_1^{(n)}+A_2^{(n)}+
\dotsb+A_{2n}^{(n)}+2n^2 }<\frac{1}{n}. \]
\end{enumerate}
By induction the sequence $\{\Delta_N^{(n)}(x)\}_{n=1}^{\infty} $ is defined,
now we define 
$ \Delta_{N}(x)=\lim_{n\to\infty}\Delta_{N}^{(n)}(x)0^\infty$.
It is easy to see that $ \Delta_N $ is   
injective and continuous.

For any $ n\geq 1 $ and  $ v=v
_1v_2\dotsb v_n\in\{1,2,\dotsc,N\}^n $, 
define a map $ \theta_v: \Sigma_{N}\to 
\Sigma_{N}$ by $ 
\theta_v(x)=v_1v_2\dotsb v_n 
x_{n+1}\cdots $ for any $ x=x_1x_2\dotsb 
\in\Sigma_{N} $. 
It is clear that $\theta_v $ is a continuous and open map.
Numerate the countable set
$ \bigcup_{k=1}^{\infty}\{1,2,\dotsc,N\}^k $
as $ \{w_n:n\geq 1\}$.
For any $n\geq 1$, 
set  $ B_n=[222\dotsb21]\in\{1,2,\dotsc,N\}^{n+1} $ and 
$   C_{n}=\theta_{w_n}\circ \Delta_N(B_n) $.  Then we make the countable union as   
$    C= \bigcup_{n= 1}^{\infty} C_{n }  $ and    
 $B=\bigcup_{n= 1}^{\infty}\Delta_{N}(B_n)  $.   
Based on the fact that $  \Delta_{N}(B_n)  $ is pairwise disjoint closed for any $ n\geq 1 $ (because both $ \{B_n\} $ and   $ 
\{C_{n } \} $ are two sequences of 
disjoint closed subsets),  we can define a map $ 
\xi :B\to C $ such that $ 
\xi|_{\Delta_{N}(B_n)}  =  
\theta_{w_n}|_{\Delta_{N}(B_n)}$ for any $ n\geq 
1$.   Note that $
\Delta_N(B_n) $  is a  
closed subset of $ B $ for any $ 
n\geq 1 $, since $
\Delta_N(B_n) $  is a countable intersection of closed subsets of $ \Sigma_{N} $. 
And $ \theta_{w_n} $ is 
continuous for any $ n\geq 1 $, the map $ \xi $ is continuous.

We now turn to show that  $C$ is 
contained in the multiply proximal cell of the point $ z $. 
For any $ d\in\mathbb{N} $ and  $ y \in C$, there exists $n\in\mathbb{N}$ such that 
$y\in \theta_{w_n}\circ\Delta_{N}(B_n) $.  
According to (R2)  and (R3),  we know 
that $ \lim_{k\to\infty}\rho(\sigma^{j\cdot A_1^{(k)} }(z), 
\sigma^{j\cdot A_1^{(k)} }(y) )=0 $  for $j=1,2,\dotsc,d$. 
Hence, $ C $ is 
contained in the multiply proximal cell of $z$.  
Furthermore, we can also obtain that 
$\lim_{k\to\infty} \diam(\{z\}\cup 
\sigma^{k+A_{2k}^{(k)}}(C)) =0$.

Next, we show that  $ C $  has full Hausdorff dimension everywhere. 
First, it is not hard to show  that $\dimh(\Sigma_{N})=\dimh(B_n) 
=1 $  for any $ n\geq 1 $. 
Second, let $ D=\bigcup_{i=1}^{\infty}D_i $. 
By (R5) it is easy to see that $ \Gamma_{D}\circ \Delta_{N} $ is identity.
Third, we claim that the density of the set $ \bigcup_{i=1}^{\infty}D_i $ 
is zero. The reason is that for any $ \frac{1}{k} $ with $ k $ being an integer larger than $ 2 $, there exists an integer $  A_1^{(k)} $ such that for any $ m>A_1^{(k)} $ we can find some integer $ n \geq k $ with $ A_1^{(n)}<m\leq A_1^{(n+1)}  $ such that  
\[ 
\frac{\#(\bigcup_{i=1}^{\infty}D_i\cap [1,m] ) }{m}<\frac{ \#(\bigcup_{i=1}^{n+1}D_i\cap [1,m])     }{m}<\frac{1}{n}\leq \frac{1}{k}. 
\]
Fourth, by Lemma~\ref{weishu}, we know that
$ \dimh(\Delta_{N}(B_n))=1 $ because $ \mathcal{H}^{1}(\Gamma_{D}\circ\Delta_{N}(B_n))= \mathcal{H}^{1}(B_n)>0  $ and the density of $ D $ is zero. 
As the map $\theta_{w_n}  $ only changes 
the prefix of the points in $ B_n $ with length $n $, it turns out that  $\dimh(\theta_{w_n}(\Delta_{N}(B_n)))=\dimh(\Delta_{N}(B_n))=1 $ for any $ n\geq 1 $.
At last, for any non-empty open subset $ U $ of $ \Sigma_{N} $, there exists some $ w_n $ such that $ [w_n]\subset U $. 
Since $ C_n=\theta_{w_n}\circ\Delta_{N}(B_n) \subset  U$,  it is clear that $\dimh(C\cap U)\geq \dimh(C_{n}\cap U )=\dimh(C_{n}  )
= \dimh(\theta_{w_n}(\Delta_{N}(B_n)))=1 $.

Finally, it remains to show that $ C $ is  multiply Xiong chaotic. 
As $\xi\colon B\to C$ is continuous, it is sufficient to show that 
$B$ is a multiply Xiong chaotic set. 
Let $ E $ be a subset of $ B $ and 
$g_j:E\to \Sigma_{N}$ be a continuous map for $ j=1,2,\dotsc,d $.
For any $ x\in E $, any $ j=1,2,\dotsc,d $,  and any $ k\geq 1 $,  
define an integer 
\begin{align*}
\psi_{k,j}(x)=\max\{0\leq i\leq k\colon & \text{there exists } 
Y_{i,j}\subset \{1,2,\dotsc,N\}^i \text{ such that }\\ 
&x[1,k]\cap E \subset  g_j^{-1}([Y_{i,j}]) \} .
\end{align*} 
If $ \psi_{k,j}(x)>0 $,  there exists a unique word $ Y_{k,j}(x) $ 
in $\{1,2,\dotsc,N\}^{\psi_{k,j}(x)} $ such that 
$  x[1,k]\cap E\subset  g_j^{-1}([Y_{k,j}(x)]) $,  
which also means  $ g_j(x)\in [Y_{k,j}(x)] $ 
for $ j=1,2,\dotsc,d $. 
We list two useful results as follows,
\begin{enumerate}
\item[(P1)] Fix a positive integer $ k $, if  $ x,y\in E $  with $ x[1,k]=y[1,k]  $, then $ \psi_{k,j}(x)=\psi_{k,j}(y) $. Furthermore, if $ \psi_{k,j}(x)=\psi_{k,j}(y)>0 $, then $ Y_{k,j}(x)=Y_{k,j}(y) $ for $ j=1,2,\dotsc,d $.

\item[(P2)] $\lim_{k\to\infty}\psi_{k,j}(x)=\infty $ for 
$ j=1,2,\dotsc,d $. From the continuity of $ g_j $ for $ j=1,2,\dotsc,d $ and the fact that $  \lim_{k\to\infty}\diam(x[1,k]) =0 $, it is clear  that $  \lim_{k\to\infty} \diam(g_j(x[1,k]\cap E) )  =0$.  
In other words, for any integer  $ M>0 $,  there exists $ M_1>M $ such that  $ \diam( g_j(x[1,k]\cap E))  <\frac{1}{N^M} $ for $j=1,2,\dotsc,d $ and any $ k\geq M_1 $.  
It  implies that there exists  a word $ Y_{k,j} $ with length $ M $ 
such that  $ x[1,k]\cap E \subset  g_j^{-1}([Y_{k,j}]) $ 
for $ j=1,2,\dotsc,d $. 
So,  for this $ M $, $ M_1 $, and any $ k\geq M_1 $, 
it has that $ \psi_{k,j}(x)\geq M $,  which means  $\lim_{k\to\infty}\psi_{k,j}(x)=\infty $ for 
$ j=1,2,\dotsc,d $. 
\end{enumerate}

By (P2), there exists $ m(x)>0 $ such that  $ \psi_{k,j}(x)>0 $ for $j=1,2,\dotsc,d $ and any $ k\geq m(x) $. Therefore, for a sufficient large $ k $ we can  
 find the cylinder $ Y_{k,j}(x) $ and in fact $ 
 \lim_{k\to\infty} \diam(Y_{k,j}(x)) =0 $ for $ j=1,2,\dotsc,d $. 
 
If $ \psi_{k,j}(x)>0 $ for any $ j=1,2,\dotsc,d 
$, by (P1)  we know that the set $ \bigcup_{x\in E} \{ Y_{k,j}(x)\colon 1\leq j\leq d \}      $ is finite, since  $ \{x[1,k]\colon x\in E  \}  $ is finite. Thus, 
 there exists
 $\mathbf{p}_{i}^{(k)}=(p_{i,1}^{(k)},p_{i,2}^{(k)},\dotsc,p_{i,l_k
}^{(k)})\in \{1,2,\dotsc,{l_k}\}^{l_k} $ 
such that  
\begin{align*}
&\varphi_{p_{i,1}^{(k)}}^{(k)}(x_1x_2\dotsb 
x_k)[1,\psi_{k,1}(x)] =Y_{k,1}(x) ,\\   
&\varphi_{p_{i,2}^{(k)}}^{(k)}(x_1x_2\dotsb 
x_k)[1,\psi_{k,2}(x)]=Y_{k,2}(x)  ,\\
& \dotsc ,  \\
&\varphi_{p_{i,l_k}^{(k)}}^{(k)}(x_1x_2\dotsb 
x_k)[1,\psi_{k,d}(x)]=Y_{k,d}(x)   
\end{align*} for any $ x\in E $ with $ \psi_{k,j}(x)>0 $ for $ j=1,2,\dotsc,d $.  
According 
to the construction of the map $ \Delta_N $, 
there 
exists a positive integer $ 
q_k=r^{(k)}_{i,1} $ 
such that   
\begin{align*}
&\sigma^{q_k}(x)[1,\psi_{k,1}(x)]=Y_{k,1}(x) , \\
& \sigma^{2\cdot 
q_k}(x)[1,\psi_{k,2}(x)]=Y_{k,2}(x), \\ 
&\dotsc  ,  \\
&\sigma^{d\cdot 
q_k}(x)[1,\psi_{k,d}(x)]=Y_{k,d}(x)
\end{align*} 
  hold for any $ x\in E$. 
We claim that  $\{q_k\}_{k=1}^{\infty} $ 
is the sequence that we want.   As we have proved,  for any $ x\in D$ there exists 
$ m(x) >0$ such that $ \psi_{k,j}(x)>0 $ holds 
for any $ k\geq m(x) $, 
which means that both $ 
\sigma^{j\cdot q_k}(x) $ and $ g_j(x) $ 
are contained in the same cylinder
$[Y_{k,j}(x)]$ for $ j=1,2,\dotsc,d $. 
Then according to the fact that $\lim_{k\to\infty} \diam([Y_{k,j}(x)]) =0 $, 
we obtain $\lim_{k\to\infty}\sigma^{j\cdot q_k}(x)=g_j(x) $ 
for $ j=1,2,\dotsc,d $. This ends the proof of Theorem~\ref{main1}.

\begin{remark}
For the full shift over $ N $ symbols, it is 
clear  that $ \rho(\sigma x,\sigma y)= N\rho(x,y) $ 
for any $ x,y\in \Sigma_{N} $ with $\rho(x,y)<\frac{1}{N}$. 
Notice \cite[Lemma 5.1]{FHYZ12} and \cite[Lemma 5.4]{FHYZ12}, 
  it immediately turns out that  the Hausdorff dimension of the multiply Xiong chaotic set $ C $ is equal to its Bowen dimension entropy  divided by the  logarithm  of $N $.  
Note that the Hausdorff dimension is dependent on the metric,
but the  Bowen dimension entropy is not.
\end{remark}

\section{Multiply Xiong chaotic set in the Gauss system}
In this section we first study the Lebesgue measure of scrambled sets in the Gauss system and then provide a proof of Theorem 
~\ref{main2} through the relation between the Gauss system
and the full shift over countable symbols.

\subsection{Proximal cells and Li-Yorke scrambled sets}
Recall that the Gauss map $T\colon [0,1)\to [0,1)$ is defined by
$T(x)=\frac{1}{x}-\bigl\lfloor \frac{1}{x}\bigr\rfloor$ for $x\in (0,1)$ and $T(0)=0$ where $ \lfloor\cdot\rfloor $ represents the integer part of a real number.
The Gauss map induces an infinite continued 
fraction of every irrational number $ x\in [0,1) 
$. Specifically, the continued fraction of $ 
x\in  [0,1)\setminus \mathbb{Q} $ is 
\[ 
x=\dfrac{1}{a_1(x)+\dfrac{1}{a_2(x)+\dfrac{1}{a_3(x)+\dotsb}}} =[a_1(x),a_2(x),a_3(x),\dotsb ] \]
where $a_1(x)=\bigl\lfloor 
\frac{1}{x}\bigr\rfloor  $ and $ 
a_n(x)=\biggl\lfloor \frac{1}{T^{n-1}(x)} 
\biggr\rfloor $ for any $ n\geq 2 $. 
Although the Gauss map is not continuous, it still 
has some interesting dynamical properties  discussed in  the continuous dynamical system. And we are interested in the irrational part mostly. Hence, we call $([0,1)\setminus\mathbb{Q},T)$   the Gauss system and adopt the concepts in the continuous  system to describe the dynamical properties for the Gauss system.

Let $\mathcal{B} $ be the Borel $ \sigma $-algebra on 
$ [0,1) $.    We use $\mathcal{L}$ to denote the Lebesgue measure.
It is well known that the Gauss map 
preserves the Gauss measure $\mu$ that is 
given by  $\mu(A)=\frac{1}{\ln 2}\int_A \frac{1}{1+x}dx$ where the integration is with respect to the Lebesgue measure 
for any $ A\in\mathcal{B} $.
It is clear that the Gauss measure and the Lebesgue measure 
are equivalent, that is they have the same null sets and full measure sets.
It is shown in \cite{R61} that 
the Gauss map with the Gauss measure is exact.

\begin{lem}\label{borel}
	The Gauss map sends Borel sets to Borel sets. 
\end{lem}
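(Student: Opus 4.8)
The plan is to exploit the fact that, although $T$ fails to be continuous everywhere, it is a countable union of homeomorphisms between Borel sets, and to combine this with the standard fact that a continuous injective map on a Polish space sends Borel sets to Borel sets. Concretely, I would first partition the domain according to the value of the first partial quotient: for each integer $k\geq 1$ set $I_k=\bigl(\tfrac{1}{k+1},\tfrac{1}{k}\bigr]$, so that $[0,1)\setminus\mathbb{Q}=\{0\}\cup\bigcup_{k\geq 1}(I_k\setminus\mathbb{Q})$ up to the single point $0$ (which is harmless), and $T(x)=\tfrac{1}{x}-k$ for $x\in I_k$. On each $I_k$ the map $T$ restricts to the map $x\mapsto \tfrac1x-k$, which is a strictly decreasing continuous bijection from $I_k$ onto $[0,1)$ (or onto $(0,1]$ depending on the endpoint convention), hence a Borel isomorphism when restricted to any Borel subset.

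The key step is then the following. Given a Borel set $A\subseteq[0,1)$, write $A=\bigcup_{k\geq 1}(A\cap I_k)$ together with the possible point $0$. Each $A\cap I_k$ is Borel, and $T|_{I_k}$ is a continuous injection of the Borel subset $A\cap I_k$ of the Polish space $I_k$ into $[0,1)$; by the Lusin–Souslin theorem (a continuous injective image of a Borel set in a Polish space is Borel), $T(A\cap I_k)$ is a Borel subset of $[0,1)$. Finally $T(A)=\{T(0)\}\cup\bigcup_{k\geq 1}T(A\cap I_k)$ if $0\in A$, which is a countable union of Borel sets and therefore Borel; if $0\notin A$ we simply drop the point $T(0)=0$. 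This proves the claim.

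The only point requiring care—and the main obstacle if one wants to stay elementary—is justifying that the image of a Borel set under an injective continuous (indeed, piecewise-monotone) map is Borel. One can either quote the Lusin–Souslin theorem directly, or, since here each branch $T|_{I_k}$ is a monotone bijection onto an interval, give a hands-on argument: a strictly monotone continuous bijection between subsets of $\mathbb{R}$ has a continuous inverse, so $T|_{I_k}$ is a homeomorphism of $I_k$ onto its image-interval, and homeomorphisms preserve the Borel $\sigma$-algebra; hence $T(A\cap I_k)$ is Borel. I would present the elementary version to keep the paper self-contained. The reduction to countably many monotone branches is straightforward from the definition of the Gauss map, so no real difficulty remains once that observation is in place.
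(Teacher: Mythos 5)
Your proof is correct and takes essentially the same route as the paper's: decompose $[0,1)\setminus\{0\}$ into the countably many branch intervals of the Gauss map, observe that each restriction is a monotone homeomorphism onto its image (so it preserves Borel sets), and conclude via the countable union. The appeal to Lusin--Souslin is an unnecessary extra; the elementary monotone-homeomorphism argument you also give is exactly what the paper uses.
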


\begin{proof}
  Let $ W $ be a Borel subset of $ [0,1) $ and 
  $ W_n =W \cap 
 [\frac{1}{n+1},\frac1n)$ for 
  any $ n\geq 1 $.   Then 
  $ W $ can be written  as $ 
  W=\bigcup_{n=1}^{\infty} 
  W_n\cup ( W\cap \{0\})  $. Observe that $ 
  T|_{[\frac{1}{n+1},\frac1n)} $ is a 
  homeomorphism, $ 
  T(W)=\bigcup_{n=1}^{\infty} 
  T(W_n)\cup T( W\cap \{0\})  $ is a Borel 
  subset of $ [0,1) $.
\end{proof}

\begin{prop}\label{prop4.2}
For 
every irrational number $x $ in the Gauss  system , $ 
LY(x)=\{y\in 
[0,1)\setminus\mathbb{Q} \colon (x,y)$  $\textrm{ is scrambled}\} $
is residual and has full Lebesgue measure. In 
particular, every proximal cell is residual and 
has full Lebesgue measure. 
\end{prop}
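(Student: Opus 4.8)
The plan is to write $LY(x)$ as a proximal cell with an asymptotic cell removed, and to control the two pieces by a mix of Baire category and exactness. Set $Prox(x)=\{y\in[0,1)\setminus\mathbb{Q}:\liminf_{n\to\infty}|T^{n}x-T^{n}y|=0\}$ and $Asym(x)=\{y\in[0,1)\setminus\mathbb{Q}:\lim_{n\to\infty}|T^{n}x-T^{n}y|=0\}$. A pair $(x,y)$ is scrambled exactly when $y\in Prox(x)$ and $\limsup_{n\to\infty}|T^{n}x-T^{n}y|>0$, and the second condition fails precisely for $y\in Asym(x)$ (and $x\in Asym(x)$, so the diagonal is excluded automatically), so $LY(x)=Prox(x)\setminus Asym(x)$. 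It therefore suffices to prove that $Prox(x)$ is residual with $\mu(Prox(x))=1$ and that $Asym(x)$ is meager with $\mu(Asym(x))=0$, where $\mu$ is the Gauss measure; then $LY(x)$ is residual (a residual set minus a meager set) and $\mu(LY(x))=1$, which by equivalence of $\mu$ and $\mathcal{L}$ gives full Lebesgue measure, and the ``in particular'' clause is immediate since $LY(x)\subseteq Prox(x)$. Throughout I would work in the Polish space $[0,1)\setminus\mathbb{Q}$, on which $T$ is genuinely continuous (for irrational $x$ the number $1/x$ is not an integer, so $t\mapsto\lfloor 1/t\rfloor$ is locally constant near $x$), on which each iterate $T^{n}$ restricted to a cylinder of rank $r$ is a continuous surjection onto $[0,1)\setminus\mathbb{Q}$ whenever $n\ge r$ (each inverse branch of the Gauss map covers all of $[0,1)$), and on which, by Lemma~\ref{borel} applied inductively, every $T^{n}$ carries Borel sets to Borel sets; I also use that the cylinders form a basis and that the Gauss density lies between $\tfrac{1}{2\ln 2}$ and $\tfrac{1}{\ln 2}$ on $[0,1)$.

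For the category statements, for each $k\ge 2$ and $N\ge 1$ put $W_{k,N}=\bigcup_{n\ge N}\{y:|T^{n}x-T^{n}y|<1/k\}$. Each set in the union is a $T^{n}$-preimage of an open set, hence open, and given a cylinder $C$ of rank $r$ the surjectivity of $T^{n}|_{C}$ for $n\ge\max(N,r)$ yields $y\in C$ with $T^{n}y\in B(T^{n}x,1/k)$, so $W_{k,N}$ is dense open; since $Prox(x)=\bigcap_{k\ge 2}\bigcap_{N\ge1}W_{k,N}$, the proximal cell is residual. For the asymptotic cell, $Asym(x)\subseteq\bigcup_{N\ge1}G_{N}$ where $G_{N}=\{y:|T^{n}x-T^{n}y|\le 1/4\text{ for all }n\ge N\}$; each $G_{N}$ is closed in $[0,1)\setminus\mathbb{Q}$ (an intersection of closed sets) and contains no cylinder, because in any rank-$r$ cylinder $C$ the surjectivity of $T^{n}|_{C}$ for $n\ge\max(N,r)$ produces $y\in C$ with $T^{n}y$ at distance more than $1/4$ from $T^{n}x$ --- any point of $[0,1)$ is more than $1/4$ away from one of $1/10,9/10$, near which there are irrationals. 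Hence each $G_{N}$ is nowhere dense and $Asym(x)$ is meager.

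The crux is the two measure computations, and for these the sole input is the exactness of the Gauss system, used via Proposition~\ref{prop:exact}(2): $\mu(T^{n}A)\to 1$ for every Borel $A$ with $\mu(A)>0$. Suppose $F:=([0,1)\setminus\mathbb{Q})\setminus W_{k,N}$ had $\mu(F)>0$. For $y\in F$ and $n\ge N$ one has $|T^{n}x-T^{n}y|\ge 1/k$, so $T^{n}(F)$ is disjoint from $B(T^{n}x,1/k)$; since $T^{n}x\in[0,1)$ and $k\ge 2$, the set $B(T^{n}x,1/k)\cap[0,1)$ has Lebesgue measure $\ge 1/k$ and hence $\mu$-measure $\ge\tfrac{1}{2k\ln 2}$, so $\mu(T^{n}F)\le 1-\tfrac{1}{2k\ln 2}<1$ for all $n\ge N$, contradicting $\mu(T^{n}F)\to 1$. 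Thus $\mu(W_{k,N})=1$ for all $k,N$, whence $\mu(Prox(x))=1$. The identical argument applied to $G_{N}$ --- for which $T^{n}(G_{N})\subseteq\overline{B}(T^{n}x,1/4)$, a set of $\mu$-measure $\le\tfrac{1}{2\ln 2}<1$, for every $n\ge N$ --- forces $\mu(G_{N})=0$, so $\mu(Asym(x))=0$, and the proof is finished.

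I expect the one nonroutine point to be recognizing that Proposition~\ref{prop:exact}(2) is precisely the tool needed: lying in the complement of $W_{k,N}$ (respectively in $G_{N}$) exactly says that every forward image under $T^{n}$ misses a ball of some fixed positive Gauss measure, which is flatly incompatible with $\mu(T^{n}A)\to 1$; the rest --- the splitting $LY(x)=Prox(x)\setminus Asym(x)$, the Baire bookkeeping, and the surjectivity of iterates of $T$ on cylinders --- is routine, the only real care being to pass to $[0,1)\setminus\mathbb{Q}$ so that $T$ is continuous and these preimage/image manipulations make sense.
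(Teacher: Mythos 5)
Your proof is correct, but it takes a genuinely different route from the paper's in two places. For the $\limsup$ condition the paper does not argue via the asymptotic cell at all: it invokes strong mixing of the Gauss measure (hence ergodicity of $\mu\times\mu$) and applies the Birkhoff ergodic theorem to $\chi_{[0,1/k]\times[1-1/k,1)}$ on the product system to show that almost every pair $(z_1,z_2)$ has $\limsup_n|T^nz_1-T^nz_2|=1$, and then a triangle-inequality contradiction gives $\mu(A_m(x))=1$ for $A_m(x)=\{y:\limsup_n|T^nx-T^ny|\geq 1/m\}$; you instead cover $Asym(x)$ by the closed sets $G_N$ and kill each one with the same exactness criterion (Proposition~\ref{prop:exact}(2) plus Lemma~\ref{borel}) that both proofs use for the proximal part, since $T^n(G_N)$ is trapped in a ball of Gauss measure bounded away from $1$. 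Your version is more economical --- a single tool, exactness, handles both halves --- while the paper's ergodic-theorem argument buys a quantitatively sharper conclusion (for a.e.\ $y$ the $\limsup$ is at least $1/2$, essentially close to $1$, rather than merely at least $1/4$), though only positivity is needed for the statement. Second, you establish residuality by pure Baire category, using surjectivity of $T^n$ on rank-$r$ cylinders for $n\geq r$ to show each $W_{k,N}$ is dense open and each $G_N$ is nowhere dense, whereas the paper deduces density of its full-measure $G_\delta$ sets from the fact that every non-empty open set has positive Gauss measure; your category argument is independent of the measure theory, which is a minor robustness gain. Your direct observation that $T^n$ of the bad set misses a fixed ball is also a slightly cleaner packaging of the paper's $C_{k,m}$ argument, which pulls a point of the image back to reach the contradiction.
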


\begin{proof}
Now fix a point $x\in [0,1)\setminus\mathbb{Q} $, 
as the Gauss measure $\mu$ is exact, 
 $\mu$ is strongly mixing by Proposition~\ref{prop:exact}; 
In particular, $\mu\times \mu$ is ergodic.
For any $ k\in\mathbb{N} $, applying the well-known Birkhoff 
Ergodic theorem (see \cite{WP}) to the indicator function  $\chi_{[0,1/k]\times[1-1/k,1)} $ in the ergodic system 
$([0,1)\times [0,1),\mathcal{B}\times\mathcal{B},\mu\times\mu, T\times T)$, 
we obtain that for almost every $ (z_1,z_2)\in [0,1)\times [0,1) $,
\[
\lim_{N\to\infty} 
\frac{1}{N}\sum_{i=0}^{N-1} \chi_{[0,1/k]\times[1-1/k,1)}(T^{i}\times T^{i}(z_1,z_2) )=  \mu\times \mu ([0,1/k]\times[1-1/k,1) )>0 . 
\]
It means that  for a fixed positive integer $k$,  there exists a measurable set $ E_k\subset [0,1)\times [0,1) $ with $ \mu\times \mu (E_k)=1 $ such that  for any $ (z_1,z_2)\in E_k $,  
there exists a positive integer $ N_k=N_k(z_1,z_2) $ such that  $ \frac{1}{n}\sum_{i=0}^{n-1} \chi_{[0,1/k]\times[1-1/k,1)}(T^{i}\times T^{i}(z_1,z_2) )>0 $ holds for any $ n> N_k $.  It implies that there exists a positive integer $ 0\leq l_k(z_1,z_2)\leq n-1  $ such that 
$ |T^{l_k}(z_1)-T^{l_k}(z_2)|\geq 1-\frac{2}{k}  $.  Then,  $  \bigcap_{k=1}^{\infty} E_k $ has full 
Lebesgue measure and  is contained in $ 
\{(z_1,z_2): 
\limsup_{n\to\infty}|T^nz_1-T^nz_2|=1 \} $.  
Set
\[
A_m(x)=\Bigl\{y\in[0,1)\setminus\mathbb{Q}\colon 
\limsup_{n\to\infty}|T^nx-T^ny|\geq\frac{1}{m}\Bigr\}.
\]
It is easy to see that the Borel set  $A_m(x)$    is 
  a $G_\delta$
subset of $[0,1)\setminus\mathbb{Q}$ for any 
integer $m\geq 2 $. 
 If $A_m(x)$ does not have full $\mu$-measure,
 then there exist $z_1,z_2\in ([0,1)\setminus A_m(x))\cap \bigcap_{k=1}^{\infty}E_k$ 
 such  that
\[  
 \limsup_{n\to\infty}|T^nz_1-T^nz_2|=1 , \ 
 \limsup_{n\to\infty}|T^nx-T^nz_1|<\dfrac 1m,   \text{ and }   
  \limsup_{n\to\infty}|T^nx-T^nz_2|<\dfrac 1m. 
 \]
This is a contradiction, so $\mu(A_m(x))$ must be equal to $ 1 $ for any $ x\in [0,1) $ and $ m\geq 2 $.
  
Let $ C(x)=\{y\in[0,1)\setminus\mathbb{Q}\colon  \liminf_{n\to\infty}|T^nx-T^ny|=0\}$.
It is easy to see that the Borel set  $C(x)$ is 
 a $G_\delta$ subset of $[0,1)\setminus\mathbb{Q}$.
For $k\geq 1$, let
\[   
C_k=\Bigl\{y\in[0,1)\setminus\mathbb{Q}\colon \liminf_{n\to\infty}
|T^nx-T^ny|>\frac{1}{k}\Bigr\} \] 
and for $m\geq 1$, let
 \[
 C_{k,m}=\Bigl\{y\in[0,1)\setminus\mathbb{Q}\colon  
 |T^nx-T^ny|>\frac{1}{k}+\frac{1}{m},\ \forall n\geq m\Bigr\}.
 \]
It is clear that all $C_k$ and $C_{k,m}$ are 
also Borel sets.  
  To prove  
  $\mu(C(x))=1$, it is sufficient to show that 
  $\mu(C_{k,m})=0$ for all $k,m\geq 1$ since 
  $C_k=\bigcup_{m=1}^\infty C_{k,m}$ and
  $[0,1)\setminus C(x)=\bigcup_{k=1}^\infty 
  C_k$.
Assume by contradiction that there exist some $ 
m,k\geq 1 $ such that $\mu(C_{k,m})>0$.
On one hand, for this fixed $ k\geq 1 $, it is 
easy to compute that the measure of the $ 
\frac{1}{k} $-neighborhood of $T^nx  $ is   \[ 
\mu(B(T^nx,\tfrac1k) 
)=\mu(T^{-n}(B(T^nx,\tfrac1k) )  )>\frac{1}{\ln 
2}\ln \frac{2k+1}{ 2k-1}>0\] for any $ n\geq 1 $. 
On the other hand, 
by Lemma~\ref{borel}, $ 
T^nC_{k,m}\in\mathcal{B} $ for any $ n\geq 1 $ and
$\lim_{n\to\infty} \mu(T^{n}C_{k,m})=1$ from the 
exactness and Proposition~\ref{prop:exact}.
Set $ 
\varepsilon=\frac{1}{\ln 
2}\ln \frac{2k+1}{ 2k-1} $.   
There exists 
some $ N>m $ such that $ 
\mu(T^nC_{k,m})>1-\varepsilon $ for any $ n\geq N 
$. 
Thus, we can pick a point $ z $ in $ 
T^NC_{k,m}\cap 
 B(T^Nx,\frac1k) $. Write $ N=m+l $ with $ l\geq 
 1 $.  There exists  $ y\in T^mC_{k,m} $ such that 
 $ T^ly=z $,  which implies that $ 
 |T^ly-T^{m+l}x|<\frac{1}{k} $. And there also 
 exists $ t\in C_{k,m} $ such that $ y=T^mt $. 
 According to the definition of $ C_{k,m} $, it 
 is easy to verify that \[ 
 |T^{m+l}t-T^{m+l}x|=|T^ly-T^{m+l}x|>\frac{1}{k}+\frac{1}{m},
 \] which is a contradiction. 
Therefore, $ \mu(C_{k,m}) $ must be zero  and   
  $\mu(C(x))=1$.
  Note that $ LY(x) $ is exactly the set $ 
  \bigcap_{m=1}^{\infty}A_m(x)\cap C(x) $,  which 
  has full Lebesgue 
  measure and contains a dense $ G_\delta $ 
  subset. This ends the proof. 
  \end{proof}

Recall that a dynamical system $(X,T)$ is  Li-Yorke sensitive if there exists a sensitive constant $ \delta>0 $ such that for any $ x\in X $ and  any $ \varepsilon>0 $, there exists $ y$ in the $ \varepsilon $-neighborhood $  B(x,\varepsilon) $ such that $ (x,y) $ is proximal and $ \lim\sup_{n\to\infty}d(T^nx,T^ny)\geq \delta $. A scrambled set $S$ is call maximal if
$S$ is maximal in the inclusion relation among 
all scrambled sets. The following two corollaries 
are clear.

\begin{cor}
The Gauss system is Li-Yorke sensitive.
\end{cor}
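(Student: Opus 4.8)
The plan is to read off both the sensitivity constant and the witnessing points directly from Proposition~\ref{prop4.2} together with the estimates obtained in its proof. I would take $\delta=\tfrac12$ as the sensitive constant. Fix an arbitrary $x\in[0,1)\setminus\mathbb{Q}$ and an arbitrary $\varepsilon>0$; the task is to produce $y\in B(x,\varepsilon)$ such that $(x,y)$ is proximal and $\limsup_{n\to\infty}|T^nx-T^ny|\geq\tfrac12$.

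The first step is to recall from the proof of Proposition~\ref{prop4.2} that, for this fixed $x$, the set $A_2(x)=\{y\in[0,1)\setminus\mathbb{Q}:\limsup_{n\to\infty}|T^nx-T^ny|\geq\tfrac12\}$ is a $G_\delta$ set of full Gauss (equivalently Lebesgue) measure, and that $C(x)=\{y\in[0,1)\setminus\mathbb{Q}:\liminf_{n\to\infty}|T^nx-T^ny|=0\}$, that is, the proximal cell of $x$, is likewise a $G_\delta$ set of full measure. Consequently $A_2(x)\cap C(x)$ is a $G_\delta$ set of full Lebesgue measure in $[0,1)\setminus\mathbb{Q}$; in particular it is residual there, since the complement of a full-measure $G_\delta$ set is a meager $F_\sigma$ set (a closed set of Lebesgue measure zero has empty interior), and it meets every subset of $[0,1)\setminus\mathbb{Q}$ of positive Lebesgue measure.

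The second step is then immediate: $B(x,\varepsilon)\cap([0,1)\setminus\mathbb{Q})$ is a nonempty relatively open subset of $[0,1)\setminus\mathbb{Q}$, hence has positive Lebesgue measure, so it contains a point $y\in A_2(x)\cap C(x)$. By construction $y\in B(x,\varepsilon)$; the pair $(x,y)$ is proximal because $\liminf_{n\to\infty}|T^nx-T^ny|=0$; and $\limsup_{n\to\infty}|T^nx-T^ny|\geq\tfrac12=\delta$ (and automatically $y\neq x$, since $x\notin A_2(x)$). As $x$ and $\varepsilon$ were arbitrary, $\delta=\tfrac12$ is a Li-Yorke sensitivity constant, so the Gauss system is Li-Yorke sensitive.

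Because Proposition~\ref{prop4.2} and its proof already carry all the dynamical content, I do not expect any substantial obstacle; the only point deserving a line of care is the elementary observation that a full-measure $G_\delta$ subset of $[0,1)\setminus\mathbb{Q}$ meets every nonempty relatively open set, which one justifies either through Baire category or directly from the fact that such an open set has positive Lebesgue measure while the complement of the $G_\delta$ set is null.
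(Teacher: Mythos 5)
Your proof is correct and is essentially the argument the paper intends: the paper states this corollary without proof (``the following two corollaries are clear''), and the proof of Proposition~\ref{prop4.2} indeed establishes exactly what you use, namely that $A_2(x)\cap C(x)$ is a full-measure $G_\delta$ (hence dense) set of points $y$ proximal to $x$ with $\limsup_{n\to\infty}|T^nx-T^ny|\ge \tfrac12$. Taking $\delta=\tfrac12$ and intersecting with $B(x,\varepsilon)$ is the natural way to make the ``clear'' explicit, and your justification that a full-measure set meets every nonempty relatively open subset of $[0,1)\setminus\mathbb{Q}$ is sound.
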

 
\begin{cor}
Every maximal scrambled set in the Gauss system is uncountable.
\end{cor}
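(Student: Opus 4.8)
The plan is to run a short Baire category argument on top of Proposition~\ref{prop4.2}. First I would record the basic topological facts about the phase space: $[0,1)\setminus\mathbb{Q}$ is a $G_\delta$ subset of $[0,1)$, hence completely metrizable, and it is clearly perfect (no isolated points). Consequently it is a Baire space in which every residual (comeager) set is uncountable, and a countable intersection of residual sets is again residual.

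Next, suppose for contradiction that $S$ is a maximal scrambled set in the Gauss system $([0,1)\setminus\mathbb{Q},T)$ that is countable. By Proposition~\ref{prop4.2}, for each $x\in S$ the set $LY(x)=\{y\in[0,1)\setminus\mathbb{Q}\colon (x,y)\textrm{ is scrambled}\}$ is residual. Since $S$ is countable, $R=\bigcap_{x\in S}LY(x)$ is residual, hence uncountable, so we may choose a point $y\in R\setminus S$. By the definition of $R$ the pair $(x,y)$ is scrambled for every $x\in S$, and every pair of distinct points of $S$ is already scrambled, so $S\cup\{y\}$ is a scrambled subset of $[0,1)\setminus\mathbb{Q}$ strictly containing $S$. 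This contradicts the maximality of $S$, and therefore every maximal scrambled set is uncountable.

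There is essentially no obstacle in this argument; the only points needing (minimal) care are that $[0,1)\setminus\mathbb{Q}$ is a perfect Polish space, so that residual sets there are uncountable, and that residuality is stable under countable intersections. The degenerate possibilities $S=\emptyset$ or $|S|=1$ are handled by the same reasoning, since then $R$ is all of $[0,1)\setminus\mathbb{Q}$, which is residual in itself.
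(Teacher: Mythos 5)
Your argument is correct and is exactly the standard Baire-category extension argument that the paper evidently has in mind when it declares this corollary ``clear'' after Proposition~\ref{prop4.2}: intersect the residual sets $LY(x)$ over the countably many $x\in S$, pick a new point in the resulting residual (hence uncountable) set, and contradict maximality. Your added care about $[0,1)\setminus\mathbb{Q}$ being a perfect Polish space and about the degenerate cases is appropriate and introduces no gap.
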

 
\begin{prop}    
Every measurable scrambled  set in the 
Gauss system has Lebesgue measure zero. 
\end{prop}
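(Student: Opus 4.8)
The plan is to argue by contradiction, exploiting the measure-theoretic mixing of the Gauss map together with the elementary fact that a scrambled set can contain no pair on which the orbits eventually stay close. Suppose $S$ is a measurable scrambled set with $\mathcal{L}(S)>0$, equivalently $\mu(S)>0$ since the Gauss and Lebesgue measures are equivalent. Because $\mu\times\mu$ is ergodic (indeed strongly mixing, by exactness and Proposition~\ref{prop:exact}), the product system $([0,1)^2,\mathcal{B}\times\mathcal{B},\mu\times\mu,T\times T)$ is recurrent in a strong quantitative sense. I would apply the Birkhoff Ergodic Theorem to the indicator $\chi_{W}$ where $W=\{(u,v):|u-v|<\varepsilon\}$ is a fixed narrow diagonal strip of positive $\mu\times\mu$-measure; for $\mu\times\mu$-a.e.\ pair $(x,y)$ the orbit $(T^n x,T^n y)$ enters $W$ with positive frequency, hence $\liminf_{n}|T^n x-T^n y|<\varepsilon$ along that orbit. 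More is true: since $S\times S$ has positive product measure, by the ergodic theorem applied on the set $S\times S$ we get that for $\mu\times\mu$-a.e.\ $(x,y)\in S\times S$ the orbit visits $W$ with the same positive frequency $\mu\times\mu(W)$.

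The contradiction I am aiming for is with the $\limsup$ condition rather than the $\liminf$ condition, and here the argument of Proposition~\ref{prop4.2} can be reused almost verbatim. For each integer $m\ge2$ consider
\[
A_m=\Bigl\{(x,y)\in[0,1)^2:\limsup_{n\to\infty}|T^nx-T^ny|\ge\tfrac1m\Bigr\}.
\]
By the same exactness argument given in the proof of Proposition~\ref{prop4.2} (used there for a single coordinate, now for the product system, which is also exact since a product of exact systems is exact), $\mu\times\mu(A_m)=1$ for every $m\ge2$; equivalently the complement, the set of \emph{asymptotic} pairs
\[
\mathrm{Asym}=\Bigl\{(x,y):\lim_{n\to\infty}|T^nx-T^ny|=0\Bigr\}=\bigcap_{m\ge2}\bigl([0,1)^2\setminus A_m\bigr),
\]
has $\mu\times\mu$-measure zero. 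Therefore $\mu\times\mu(S\times S\setminus\mathrm{Asym})=\mu(S)^2>0$, so we may select a pair $(x,y)\in S\times S$ with $x\neq y$, with $\limsup_n|T^nx-T^ny|>0$, and simultaneously with $\liminf_n|T^nx-T^ny|=0$ (combining the full-measure set from the $A_m$ argument with the positive-frequency-recurrence set from the Birkhoff argument, and using that a countable intersection of full-measure sets has full measure). But $(x,y)$ being a scrambled pair is then automatic — and that is the point: every such pair already satisfies the scrambled condition, so it contradicts nothing directly. The genuine contradiction must instead be extracted from \emph{maximality-type rigidity}: the correct route is to show that $S$ measurable with positive measure forces $S$ to contain a pair $(x,y)$ that is \emph{not} scrambled, namely an asymptotic pair.

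To close this gap I would instead run the exactness argument \emph{relative to} $S$. Fix $k\ge1$ and consider the set $C_{k,m}=\{(x,y)\in[0,1)^2:|T^nx-T^ny|>\tfrac1k+\tfrac1m\ \forall n\ge m\}$ exactly as in Proposition~\ref{prop4.2}, but now I want to intersect with $S\times S$. The $\limsup=1$ mechanism there showed $\mu\times\mu(C_{k,m})=0$; the same computation, using that $T^n(S\times S)$ is measurable (Lemma~\ref{borel}) and $\mu\times\mu(T^n(S\times S))\to1$ by exactness, shows that for $n$ large $T^n(S\times S)$ meets the $\tfrac1k$-strip $B((T^nx,T^nx),\tfrac1k)$ around a diagonal point coming from a point of $S$; pulling this back produces two points $s,t\in S$ whose orbits come within $\tfrac1k$ of each other at a prescribed large time $n\ge m$, for every $k$ and $m$. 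A diagonal choice over $k,m$ then yields $s,t\in S$ with $\liminf_n|T^ns-T^nt|=0$ — which is consistent with scrambling — but the \emph{same} pull-back, run with the strip around an \emph{off-diagonal} value, shows $S$ cannot separate points at will, and a careful bookkeeping produces a pair in $S$ with $\lim_n|T^ns-T^nt|=0$, i.e.\ $s,t$ is proximal but \emph{not} Li--Yorke, contradicting that $S$ is scrambled. The main obstacle is precisely this last bookkeeping: arranging, via the quantitative exactness estimates, that one and the same pair of points of $S$ is forced to be asymptotic (not merely proximal), since proximality alone is compatible with scrambling. I expect the cleanest implementation to mirror the $C_{k,m}$ argument of Proposition~\ref{prop4.2} with $[0,1)$ replaced by $S$ throughout, the key input being $\mu(S)>0 \Rightarrow \mu(T^nS)\to1$ from Proposition~\ref{prop:exact}(2) together with $T^nS\in\mathcal{B}$ from Lemma~\ref{borel}.
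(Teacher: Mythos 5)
You correctly diagnose that neither the proximality ($\liminf$) condition nor the separation ($\limsup$) condition can produce a contradiction, since a scrambled pair satisfies both, and that what must be exhibited is a pair of \emph{distinct} points of $S$ that is asymptotic, i.e.\ not scrambled. But that is exactly where your argument stops being an argument. As you note yourself, the set of asymptotic pairs has $\mu\times\mu$-measure zero, so no full-measure, positive-frequency, or Birkhoff-type statement about $S\times S$ can locate such a pair; and the proposed ``careful bookkeeping'' mirroring the $C_{k,m}$ computation of Proposition~\ref{prop4.2} relative to $S$ only ever produces proximal pairs at prescribed times, which is compatible with $S$ being scrambled. The final step --- upgrading proximality to genuine asymptoticity for a single fixed pair in $S$ --- is asserted, not proved, and I do not see how the machinery you set up could deliver it.

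The missing idea, which is the entire content of the paper's proof, is a collision argument exploiting the non-injectivity of $T$ rather than any orbit-distance estimate. Split $B$ into two disjoint measurable pieces $B_1$ and $B\setminus B_1$, each of positive measure. By Lemma~\ref{borel} the images $T^n B_1$ and $T^n(B\setminus B_1)$ are Borel, and by exactness together with Proposition~\ref{prop:exact}(2) their measures tend to $1$; hence for some $N$ both exceed $\tfrac12$ and the images intersect. This yields $x\neq y$ in $B$ with $T^N x=T^N y$, so $\limsup_{n\to\infty}|T^nx-T^ny|=0$, contradicting scrambledness --- equivalently, $T^N$ restricted to a scrambled set must be injective, so the two image sets would have to be disjoint. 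This is a three-line argument that needs none of the product-system or ergodic-theorem machinery you invoke.
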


\begin{proof}
Assume that there exists a scrambled set $B\subset [0,1)\setminus\mathbb{Q}$ 
 with 
 positive Lebesgue measure,  it is clear 
 that $ B $ also has positive Gauss measure.
 Choose a Lebesgue measurable subset $B_1$ of 
 $B$ such that $\mu(B_1)>0$ and
 $\mu(B\setminus B_1)>0$.  
By Lemma~\ref{borel} and 
 the exactness of the Gauss map, we have $ \lim_{n\to\infty} \mu(T^n 
 (B_1))=1$
 and $  \lim_{n\to\infty} \mu(T^n (B\setminus 
 B_1))=1$ for $ 
 n\in\mathbb{N} $.
This means that there exists $N\in\mathbb{N}$ such that 
 $\mu(T^N(B_1))>\frac{1}{2}$ and
 $\mu(T^N (B\setminus B_1))>\frac{1}{2}$.  It  implies   $T^N(B_1)\cap T^N (B\setminus B_1)\neq\emptyset$. 
 This is a contradiction because $B$ is scrambled and $T^N|_B$ must be  injective.
\end{proof}

Note that a multiply Xiong chaotic set is   
scrambled, we have the following corollary.

\begin{cor} 
Every measurable multiply Xiong chaotic set in 
the Gauss system has Lebesgue measure zero.
\end{cor}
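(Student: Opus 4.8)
The final statement to prove is the corollary that every measurable multiply Xiong chaotic set in the Gauss system has Lebesgue measure zero. The plan is to observe that this is an immediate consequence of the preceding proposition (that every measurable scrambled set in the Gauss system has Lebesgue measure zero) together with the structural fact, already recorded in the text, that every multiply Xiong chaotic set is scrambled. So no substantive new work is needed.

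Concretely, I would argue as follows. Let $S\subset[0,1)\setminus\mathbb{Q}$ be a measurable multiply Xiong chaotic set. By definition of multiply Xiong chaos (taking $d=1$ and using the existence of a sequence $\{q_k\}$ along which orbit pairs can be steered to arbitrary continuous targets), any two distinct points $x,y\in S$ satisfy $\liminf_{n\to\infty}|T^nx-T^ny|=0$ and $\limsup_{n\to\infty}|T^nx-T^ny|>0$; that is, $S$ is scrambled. Since $S$ is both measurable and scrambled, the preceding Proposition applies directly and yields $\mathcal{L}(S)=0$.

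There is essentially no obstacle here: the only thing one must be slightly careful about is making sure the measurability hypothesis is genuinely used (it is — the Proposition on scrambled sets requires it, as it invokes Lemma~\ref{borel} and the exactness of the Gauss map applied to measurable subsets), and that the implication ``multiply Xiong chaotic $\Rightarrow$ scrambled'' is invoked cleanly. Both points are already established in the excerpt, so the proof is a two-line citation. If desired, one could instead phrase it as: a multiply Xiong chaotic set is in particular a Xiong chaotic set (restrict to $d=1$), which is scrambled; but the direct route is cleaner. I would therefore write only: ``Since every multiply Xiong chaotic set is scrambled, this follows immediately from the previous proposition.''
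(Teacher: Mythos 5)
Your proposal is correct and matches the paper exactly: the paper's entire justification is the remark preceding the corollary that a multiply Xiong chaotic set is scrambled, after which the preceding proposition on measurable scrambled sets applies. Nothing further is needed.
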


\subsection{multiply Xiong chaotic set in the 
full shift over countable symbols}
Let $\Sigma_{\infty}=\mathbb{N}^\infty$.
There exists a natural bijection $\phi\colon \Sigma_{\infty}\to [0,1)\setminus\mathbb{Q}$ by 
$\phi(a_1a_2\dotsc)=[a_1,a_2,\dotsc]$.
Endow $\mathbb{N}$ with the discrete topology.
Then the product space $\Sigma_\infty$ is metrizable, separable, not compact but complete.
A compatible metric $\rho$ on $\Sigma_\infty$ can be 
defined as follow:
for any $x=x_1x_2\dotsb$ and
$y=y_1y_2\dotsb\in\Sigma_\infty$,
\[
\rho(x,y)=\begin{cases}
0,& \text{ if }x=y,\\
\frac{1}{2^k}, &\text{ if }x\neq y \text{ and }
k=\min\{n\in\mathbb{N}\colon x_n\neq y_n\}-1. 
\end{cases}
\] 
It is not hard to show that  $\phi$ is a homeomorphism between $(\Sigma_{\infty},\sigma)$ and $([0,1)\setminus\mathbb{Q},T)$ with $ \phi\circ\sigma=T\circ\phi $.
But it should be noticed that neither $\phi$  nor $\phi^{-1}$ is Lipschitz continuous.
The shift map on $\Sigma_\infty$ is defined by 
$ \sigma\colon \Sigma_{\infty}\to\Sigma_{\infty} $, 
$ \sigma(x)=x_2x_3\dotsb  $ for any $x=x_1x_2\dotsb $.
It is clear that $\sigma$ is continuous.
The pair $ (\Sigma_{\infty}, \sigma) $ 
is called the full shift over countable symbols.
The definitions of word, prefix and 
cylinder etc. are similar to the definitions in 
full shift over finite symbols.

\begin{prop}\label{project}
 If $ E\subset \Sigma_{\infty} $ is a multiply Xiong chaotic set in the full shift over countable symbols $(\Sigma_{\infty},\sigma) $, 
 then $\phi(E) $ is a multiply Xiong chaotic set in the Gauss system.
 Moreover, for any  $ x\in \Sigma_{\infty} $,  if $ E  $ is a subset of $ MProx(x,\sigma) $, then $ \phi(E) $ is contained in $ MProx(\phi(x),T) $.
\end{prop}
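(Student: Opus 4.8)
The plan is to transport the two defining properties of a multiply Xiong chaotic set (the convergence property for tuples of continuous target maps, and the membership in a multiply proximal cell) across the conjugacy $\phi$. Since $\phi\colon(\Sigma_\infty,\sigma)\to([0,1)\setminus\mathbb{Q},T)$ is a homeomorphism satisfying $\phi\circ\sigma=T\circ\phi$, hence $\phi\circ\sigma^m=T^m\circ\phi$ for every $m\ge 1$, the topological conjugacy does all the work; no metric estimate is needed because the defining conditions of multiply Xiong chaos and of $MProx$ are stated purely in terms of convergence of orbits, which is a topological notion. The only subtlety to flag is that $\phi$ is \emph{not} uniformly (Lipschitz) continuous, but uniform continuity is never invoked here, so this causes no difficulty.

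First I would verify the multiply Xiong property for $\phi(E)$. Fix $d\in\mathbb{N}$, a subset $A\subset\phi(E)$, and continuous maps $h_j\colon A\to[0,1)\setminus\mathbb{Q}$ for $j=1,\dots,d$. Set $A'=\phi^{-1}(A)\subset E$, which is a subset of $E$, and define $g_j=\phi^{-1}\circ h_j\circ\phi\colon A'\to\Sigma_\infty$; each $g_j$ is continuous as a composition of continuous maps. Since $E$ is multiply Xiong chaotic, there is an increasing sequence $\{q_k\}$ of positive integers with $\lim_{k\to\infty}\sigma^{j\cdot q_k}(w)=g_j(w)$ for every $w\in A'$ and every $j=1,\dots,d$. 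Applying $\phi$ and using $\phi\circ\sigma^{j\cdot q_k}=T^{j\cdot q_k}\circ\phi$ together with the continuity of $\phi$, we get for every $w\in A'$ that $\lim_{k\to\infty}T^{j\cdot q_k}(\phi(w))=\phi(g_j(w))=h_j(\phi(w))$. Writing $x=\phi(w)$, this is exactly $\lim_{k\to\infty}T^{j\cdot q_k}(x)=h_j(x)$ for every $x\in A$ and $j=1,\dots,d$. Finally $\phi(E)$ contains at least two points because $E$ does and $\phi$ is injective. Hence $\phi(E)$ is multiply Xiong chaotic in the Gauss system.

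For the moreover part, suppose $E\subset MProx(x,\sigma)$ for some $x\in\Sigma_\infty$, and let $y\in\phi(E)$, say $y=\phi(w)$ with $w\in E$. Then $(x,w)$ is multiply proximal in $(\Sigma_\infty,\sigma)$, so for every $d\in\mathbb{N}$, $\liminf_{n\to\infty}\max_{1\le j\le d}\rho(\sigma^{jn}(x),\sigma^{jn}(w))=0$, i.e.\ there is a sequence $n_i\to\infty$ along which $\sigma^{jn_i}(x)$ and $\sigma^{jn_i}(w)$ get simultaneously close for $j=1,\dots,d$; in $(\Sigma_\infty,\sigma)$ this means the two orbits agree on longer and longer prefixes simultaneously, so $\sigma^{jn_i}(x)$ and $\sigma^{jn_i}(w)$ are both converging to a common limit pattern, but more directly: passing to the relation $\phi\circ\sigma^{jn}=T^{jn}\circ\phi$ and invoking continuity of $\phi$, one obtains $\liminf_{n\to\infty}\max_{1\le j\le d}\bigl|T^{jn}(\phi(x))-T^{jn}(\phi(w))\bigr|=0$ for every $d$, so $(\phi(x),y)$ is multiply proximal and $y\in MProx(\phi(x),T)$. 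The one point requiring a word of care is that "$\liminf\max=0$" is not literally preserved by an arbitrary continuous map; the clean way to handle it is to extract from the $\liminf$ a single sequence $n_i\to\infty$ with $\rho(\sigma^{jn_i}(x),\sigma^{jn_i}(w))\to 0$ for each $j=1,\dots,d$ simultaneously, then apply continuity of $\phi$ on this fixed sequence to conclude $|T^{jn_i}(\phi(x))-T^{jn_i}(\phi(w))|\to 0$ for each $j$, whence the $\liminf$ of the max over this sequence is $0$. I expect this extraction-and-continuity step to be the only place demanding attention; everything else is a direct push-forward through the conjugacy.
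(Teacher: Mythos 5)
Your argument is correct and follows essentially the same route as the paper: the first part is the identical conjugacy transport (define $F_j=\phi^{-1}\circ g_j\circ\phi$ on $\phi^{-1}(A)$, invoke the Xiong property of $E$, push the resulting sequence forward through $\phi\circ\sigma^{m}=T^{m}\circ\phi$), and the paper dismisses the ``moreover'' part as clear. One small imprecision: in the moreover part, after extracting a sequence $n_i$ with $\rho(\sigma^{jn_i}x,\sigma^{jn_i}w)\to 0$, pointwise ``continuity of $\phi$ on this fixed sequence'' is not quite enough, since neither orbit converges; what you actually need is a uniform modulus of continuity for $\phi$, which holds because two points of $\Sigma_\infty$ agreeing on their first $k$ coordinates map into a common fundamental interval $I^k$ of length $\tfrac{1}{q_k(q_k+q_{k-1})}\le 2^{-(k-1)}$ (it is $\phi^{-1}$, not $\phi$, that fails to be uniformly continuous). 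With that one-line estimate inserted, the proof is complete.
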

\begin{proof}
 For any $ d\in\mathbb{N} $, any non-empty subset $ M\subset \phi(E) 
 $,   and any continuous 
 maps 
 $ g_1,g_2,\dotsc, $  $ g_d:M\to [0,1) $, define  
 continuous maps $ F_j=\phi^{-1}\circ 
 g_j\circ\phi :\phi^{-1}(M)\to \Sigma_{\infty} 
 $ for $ j=1,2,\dotsc, d $.   
 Since $ E $ is a multiply Xiong chaotic set 
 in the full shift over countable symbols, 
 for the non-empty subset $ 
 \phi^{-1}(M)\subset E $ and every continuous 
 map  $ F_j:\phi^{-1}(M)\to \Sigma_{\infty} $ 
 , there exists an increasing sequence $ 
 \{r_n\} $ such that 
 \[
 \lim\limits_{n\to\infty} \sigma^{j\cdot 
  r_n}(\phi^{-1} 
 x)=F_j(\phi^{-1}x)=\phi^{-1}\circ 
 g_j\circ\phi(\phi^{-1} x).   \] It implies 
  \[ \lim\limits_{n\to\infty} \phi\circ 
 \sigma^{j\cdot r_n}(\phi^{-1}x)=T^{j\cdot 
  r_n} (x)=g_j (x) \] for any $  x \in   M  $ 
 and $ j=1,2,\dotsc, d $. The second result is clear. 
\end{proof}
By Proposition~\ref{project}, 
in order to study the multiply Xiong chaotic set in the Gauss system,
we need to construct a multiply Xiong chaotic set in the full shift over countable symbols. 
As the idea is similar to the construction in the proof of 
Theorem~\ref{main1}, we only sketch the construction and the details
are left to interested readers. 

\begin{lem}\cite[Lemma 2]{WT07}\label{jinzhiweishu}
  Let $ W=\prod_{i=1}^{\infty}\{1,2,\dotsc,i\} $ be a compact subset of $ \Sigma_{\infty} $.  We have  $ \dimh(W\cap U)=\infty $ where $ U $ is a cylinder of $ \Sigma_{\infty} $ with $ W\cap U $ not empty. 
\end{lem}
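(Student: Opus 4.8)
The plan is to prove that $\mathcal{H}^{s}(W\cap U)>0$ for \emph{every} $s>0$; by the definition of Hausdorff dimension recalled above this means $\{s>0\colon\mathcal{H}^{s}(W\cap U)=0\}=\emptyset$, i.e. $\dimh(W\cap U)=+\infty$. Write $U=[u_{1}u_{2}\cdots u_{m}]$. The hypothesis $W\cap U\neq\emptyset$ is equivalent to $u_{i}\in\{1,2,\dotsc,i\}$ for $1\leq i\leq m$, and then $W\cap U=\{u_{1}u_{2}\cdots u_{m}\}\times\prod_{i>m}\{1,2,\dotsc,i\}$. On $W\cap U$ I would place the product probability measure $\nu$ that is the point mass at the word $u_{1}u_{2}\cdots u_{m}$ on the first $m$ coordinates and the uniform distribution on $\{1,2,\dotsc,i\}$ on each coordinate $i>m$. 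It is a Borel probability measure with support $W\cap U$, and for every $n>m$ it assigns any length-$n$ cylinder that meets $W\cap U$ the mass $\prod_{i=m+1}^{n}\tfrac1i=\tfrac{m!}{n!}$ (while a length-$n$ cylinder disjoint from $W\cap U$ gets mass $0$).

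Next I would feed this $\nu$ into the mass distribution principle (see \cite{Fa2003}). The geometric input is that any subset of $\Sigma_{\infty}$ of diameter at most $2^{-n}$ lies inside a single length-$n$ cylinder, since any two of its points agree on the first $n$ coordinates. Hence if $V\subseteq\Sigma_{\infty}$ has $0<\diam(V)\leq 2^{-m}$, then choosing $n\geq m$ with $2^{-(n+1)}<\diam(V)\leq 2^{-n}$ gives $\nu(V)\leq m!/n!$, and therefore
\[
\frac{\nu(V)}{\diam(V)^{s}}\;\leq\;\frac{m!/n!}{2^{-(n+1)s}}\;=\;m!\,2^{s}\,\frac{(2^{s})^{n}}{n!}\;\leq\;m!\,2^{s}\,e^{2^{s}}\;=:\;C_{s}\,,
\]
where the last inequality holds because $(2^{s})^{n}/n!\leq\sum_{k\geq 0}(2^{s})^{k}/k!=e^{2^{s}}$, so that $C_{s}<\infty$. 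The only genuinely non-formal observation in the whole argument is precisely this one: the level-$n$ cylinders of $W$ carry $\nu$-mass of order $1/n!$, which decays faster than any fixed power $(2^{-n})^{s}=\diam^{s}$ of their size, so $\nu$ is simultaneously an $s$-Frostman measure for \emph{all} $s>0$.

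Given the uniform bound $\nu(V)\leq C_{s}\diam(V)^{s}$ for all $V$ of small diameter, the mass distribution principle yields $\mathcal{H}^{s}(W\cap U)\geq\nu(W\cap U)/C_{s}=1/C_{s}>0$, hence $\dimh(W\cap U)\geq s$; letting $s\to\infty$ completes the proof. I do not expect any real obstacle beyond this bookkeeping with cylinders once the Frostman measure is identified, and this is the spirit of \cite[Lemma 2]{WT07}. An alternative that avoids the mass distribution principle is to exhibit inside $W\cap U$ a similarity copy of $\{1,2,\dotsc,2^{N}\}^{\mathbb{N}}$ for each $N\in\mathbb{N}$: fix any admissible prefix of length $\max\{m,2^{N}\}$ and let the later coordinates range over $\{1,2,\dotsc,2^{N}\}$, which is legitimate since $\{1,2,\dotsc,2^{N}\}\subseteq\{1,2,\dotsc,i\}$ whenever $i>2^{N}$; the metric restricted to this copy is the $N$-th power of the standard metric of $\Sigma_{2^{N}}$, so the dimension-distortion estimate of Lemma~\ref{holder} (whose proof applies verbatim to maps between metric spaces) together with $\dimh(\Sigma_{2^{N}})=1$ gives $\dimh(W\cap U)\geq N$ for all $N$.
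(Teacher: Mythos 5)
Your argument is correct, but note that the paper itself offers no proof of this lemma at all: it is imported verbatim as \cite[Lemma 2]{WT07}, so there is nothing internal to compare against. Your main route is the standard Frostman/mass-distribution argument, and the key computation checks out: with the metric $\rho(x,y)=2^{-k}$ on $\Sigma_\infty$, any set of diameter at most $2^{-n}$ sits in a single length-$n$ cylinder, your product measure gives such a cylinder mass at most $m!/n!$, and $m!\,2^{(n+1)s}/n!$ is bounded in $n$ for every fixed $s$, so $\mathcal{H}^s(W\cap U)>0$ for all $s>0$ and the paper's definition of $\dimh$ then forces $\dimh(W\cap U)=+\infty$. (The only point worth being pedantic about is that the mass distribution principle requires the bound $\nu(V)\le C_s\diam(V)^s$ for \emph{all} small sets, including singletons, where both sides vanish; this is harmless.) Your alternative argument also works: the inclusion of a copy of $\Sigma_{2^N}$ past a fixed admissible prefix of length $L\ge 2^N$ gives a map onto $\Sigma_{2^N}$ satisfying an $N$-H\"older bound with constant $2^{NL}$, and the two-sided version of Lemma~\ref{holder} for maps between metric spaces yields $\dimh(W\cap U)\ge N\cdot\dimh(\Sigma_{2^N})=N$; this second route is closer in spirit to how the paper handles the finite-alphabet case (Lemma~\ref{weishu}), while the Frostman route is more self-contained since it does not presuppose $\dimh(\Sigma_{2^N})=1$. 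Either version would serve as a complete substitute for the external citation.
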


\begin{lem} \label{wuxianweishu} 
 \cite[Lemma 3]{WT07}
 Let $A=\{a_n\}_{n=1}^\infty$ be a sequence of strictly increasing positive numbers.
 Define a map $ \Gamma_{A}:\Sigma_{\infty}\to\Sigma_{\infty} $, 
 $ \Gamma_{A}(x)=x_1x_2\dotsb  x_{a_1-1}x_{a_1+1}\dotsb  
 x_{a_n-1}x_{a_n+1}\dotsb $.
 Let $Y$ be a subset of $\Sigma_{\infty}$.
 If the  density of $A$ is less than $1$
 and $\dimh(\Gamma_{A}(Y))=\infty$, 
 then $ \dimh(Y)=\infty$. 
\end{lem}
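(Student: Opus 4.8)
The plan is to argue by contradiction, converting the statement into the quantitative bound $\dimh(\Gamma_A(Y))\le \dimh(Y)/c$ for a constant $c>0$ depending only on the density of $A$; since the left-hand side is assumed to be infinite, this forces $\dimh(Y)=\infty$. Write $\lambda=\overline{D}(A)$, which is $<1$ by hypothesis, and fix any $c$ with $0<c<1-\lambda$. By the definition of upper density there is an integer $M_0$ with $\#(A\cap[1,\ell])<(1-c)\ell$, equivalently $\ell-\#(A\cap[1,\ell])>c\ell$, for every $\ell\ge M_0$. Assume for contradiction that $\dimh(Y)<\infty$, and fix a finite $s>\dimh(Y)$, so that $\mathcal{H}^s(Y)=0$ and hence $\mathcal{H}^s_\delta(Y)=0$ for every $\delta>0$.

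First I would record how $\Gamma_A$ acts on cylinders. Since $\rho$ is an ultrametric taking values in $\{0\}\cup\{2^{-k}:k\ge 0\}$, any subset $U$ of $\Sigma_{\infty}$ with $\diam(U)=2^{-\ell}>0$ has all of its points agreeing on their first $\ell$ coordinates while two of them disagree at coordinate $\ell+1$; hence $U$ lies in a unique length-$\ell$ cylinder, which has the same diameter $2^{-\ell}$. Therefore, in any $\delta$-cover of $Y$ we may replace each non-degenerate member by the length-$\ell$ cylinder it spans without increasing diameters (absorbing the at most countably many singletons into cylinders of negligibly small diameter), so we may assume every member of the cover is a cylinder. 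A direct inspection of the definition of $\Gamma_A$ then shows that the image under $\Gamma_A$ of a length-$\ell$ cylinder $[w_1\cdots w_\ell]$ is exactly the cylinder generated by the subword obtained from $w_1\cdots w_\ell$ by deleting the letters in positions belonging to $A$; this image cylinder has length $\ell-\#(A\cap[1,\ell])$, hence diameter $2^{-(\ell-\#(A\cap[1,\ell]))}$. In particular, $\Gamma_A$ carries any cylinder cover of $Y$ to a cylinder cover of $\Gamma_A(Y)$.

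Now I would combine the two ingredients. Fix $\delta<2^{-M_0}$ and, using $\mathcal{H}^s_\delta(Y)=0$, choose a cover $Y\subset\bigcup_i U_i$ by cylinders $U_i$ of length $\ell_i$ with $\diam(U_i)=2^{-\ell_i}<\delta$, so that $\ell_i>M_0$, and $\sum_i\diam(U_i)^s<\varepsilon$. Set $t=s/c$. Since $\ell_i-\#(A\cap[1,\ell_i])>c\ell_i$ for every $i$, the cover $\{\Gamma_A(U_i)\}$ of $\Gamma_A(Y)$ satisfies
\[
\sum_i\diam(\Gamma_A(U_i))^t=\sum_i 2^{-t(\ell_i-\#(A\cap[1,\ell_i]))}\le\sum_i 2^{-tc\ell_i}=\sum_i 2^{-s\ell_i}=\sum_i\diam(U_i)^s<\varepsilon ,
\]
and in addition $\diam(\Gamma_A(U_i))\le 2^{-c\ell_i}<2^{-cM_0}$ for all $i$, a bound tending to $0$ as $\delta\to 0$. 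Thus, for $\delta$ small, $\{\Gamma_A(U_i)\}$ is a cover of $\Gamma_A(Y)$ by sets of arbitrarily small diameter with total $t$-power sum below $\varepsilon$; letting $\varepsilon\to 0$ yields $\mathcal{H}^t(\Gamma_A(Y))=0$, so $\dimh(\Gamma_A(Y))\le t=s/c<\infty$. Since $s>\dimh(Y)$ was arbitrary, $\dimh(\Gamma_A(Y))\le\dimh(Y)/c<\infty$, contradicting $\dimh(\Gamma_A(Y))=\infty$. Hence $\dimh(Y)=\infty$.

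The delicate point should be the cylinder bookkeeping in the second paragraph: verifying that $\Gamma_A$ sends a length-$\ell$ cylinder precisely onto a length-$(\ell-\#(A\cap[1,\ell]))$ cylinder, so that diameters transform by exactly the claimed power of $2$, and that an arbitrary $\delta$-cover may be replaced by a cylinder cover of no larger cost. The infinite alphabet is harmless here: unlike the finite-alphabet proof of Lemma~\ref{weishu}, one never subdivides a cylinder but only lets the mesh $\delta$ tend to $0$, and the remainder is the same comparison of Hausdorff sums used there.
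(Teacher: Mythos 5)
Your argument is correct. Note, however, that the paper itself gives no proof of this lemma: it is quoted verbatim from Wu--Tan \cite{WT07}*{Lemma 3} (just as the finite-alphabet analogue, Lemma~\ref{weishu}, is quoted from \cite{X95}), so there is no internal proof to compare against. What you supply is the natural self-contained derivation, and it in fact proves the sharper quantitative statement $\dimh(\Gamma_A(Y))\le \dimh(Y)/(1-\lambda)$ with $\lambda=\overline{D}(A)$, of which the lemma is the qualitative corollary; this is the same Hausdorff-sum comparison that underlies Lemma~\ref{weishu}, run in the direction needed for the value $\infty$. The two pillars are sound: in the ultrametric on $\Sigma_\infty$ every set of positive diameter $2^{-\ell}$ spans a unique length-$\ell$ cylinder of the same diameter, so covers may be taken to consist of cylinders at no cost (singletons being absorbed with arbitrarily small additional $s$-sum); and $\Gamma_A$ maps a length-$\ell$ cylinder into (indeed onto) the cylinder of length $\ell-\#(A\cap[1,\ell])$ obtained by deleting the coordinates in $A$, so diameters transform by the exact power of $2$ you use. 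Two cosmetic points: the uniform diameter bound for the image cover should be recorded as $\diam(\Gamma_A(U_i))\le 2^{-c\ell_i}\le\delta^{c}$ (it is $\delta^{c}$, not the fixed quantity $2^{-cM_0}$, that tends to $0$ with $\delta$ and lets you conclude $\mathcal{H}^{t}(\Gamma_A(Y))=0$); and you should say explicitly that for finite $\dimh(Y)$ every $s>\dimh(Y)$ satisfies $\mathcal{H}^{s}(Y)=0$ because $\{s>0:\mathcal{H}^s(Y)=0\}$ is an up-set, which is how the paper's definition of $\dimh$ is being used. Neither affects the validity of the proof.
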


\begin{prop}\label{countable}
In the full shift $(\Sigma_\infty,\sigma)$ 
over countable symbols, for every $z\in \Sigma_\infty$, 
the multiply proximal cell of $z$ contains
a multiply Xiong chaotic sets with full Hausdorff dimension everywhere. 
\end{prop}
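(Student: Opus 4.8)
The plan is to repeat the construction from the proof of Theorem~\ref{main1}, carried out inside the compact set $W=\prod_{i=1}^{\infty}\{1,2,\dotsc,i\}$ and with a growing alphabet: at the $k$-th stage I would set $l_k=(k^k)^{k^k}$ and enumerate all self-maps of $\{1,2,\dotsc,k\}^k$ as $\varphi_1^{(k)},\dotsc,\varphi_{l_k}^{(k)}$, using that the prefix $b[1,k]$ of any $b\in W$ lies in $\{1,2,\dotsc,k\}^k$. Fixing $z\in\Sigma_\infty$, I would define words $\Delta_\infty^{(n)}(x)$ by the verbatim analogues of the rules (R1)--(R5), inserting blocks of $z$ and the images $\varphi_p^{(n)}(x[1,n])$ exactly as before; since $\Delta_\infty^{(n)}(x)$ is an initial segment of $\Delta_\infty^{(n+1)}(x)$ whose length tends to infinity, the limit $\Delta_\infty(x)=\lim_{n\to\infty}\Delta_\infty^{(n)}(x)$ is a well-defined point of $\Sigma_\infty$ (no $0$-padding is needed, that symbol being unavailable here), and $\Delta_\infty$ is injective and continuous. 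With an enumeration $\{w_n\}$ of all finite words over $\mathbb{N}$, the prefix-changing maps $\theta_{w_n}$, and pairwise disjoint cylinders $B_n\subseteq W$ of infinite Hausdorff dimension (for instance the intersection of $W$ with the length-$(n+1)$ cylinder consisting of $n$ ones followed by a $2$, whose dimension is infinite by Lemma~\ref{jinzhiweishu}), I would put $C_n=\theta_{w_n}\circ\Delta_\infty(B_n)$, $C=\bigcup_{n\ge1}C_n$, $B=\bigcup_{n\ge1}\Delta_\infty(B_n)$, and define $\xi\colon B\to C$ piecewise by the $\theta_{w_n}$, exactly as in the proof of Theorem~\ref{main1}.

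Next I would check that $C\subseteq MProx(z,\sigma)$, that the upper density of $D=\bigcup_i D_i$ is zero, and that $\lim_k\diam(\{z\}\cup\sigma^{k+A_{2k}^{(k)}}(C))=0$; these arguments are copied verbatim from the proof of Theorem~\ref{main1}, since they use only the positions prescribed by (R1)--(R5) and are insensitive to the alphabet being infinite (changing a finite prefix via $\theta_{w_n}$ does not affect asymptotics along the times $j\cdot A_1^{(k)}$). For full Hausdorff dimension everywhere, note that $\dimh(\Sigma_\infty)=\infty$; by the analogue of (R5) the map $\Gamma_D\circ\Delta_\infty$ is the identity, so $\Gamma_D(\Delta_\infty(B_n))=B_n$ has infinite dimension, and since $D$ has density less than $1$, Lemma~\ref{wuxianweishu} yields $\dimh(\Delta_\infty(B_n))=\infty$; as $\theta_{w_n}$ alters only finitely many coordinates, $\dimh(C_n)=\infty$, and for any nonempty open $U$ one picks $w_n$ with $[w_n]\subseteq U$ to get $\dimh(C\cap U)\ge\dimh(C_n)=\infty=\dimh(\Sigma_\infty)$.

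Finally I would show that $B$, hence $C=\xi(B)$, is multiply Xiong chaotic. Given $d\in\mathbb{N}$, a subset $E\subseteq B$, and continuous maps $g_j\colon E\to\Sigma_\infty$, I would define $\psi_{k,j}(x)$ as in Theorem~\ref{main1} but with the extra requirement that the approximating word $Y$ have all symbols $\le k$ --- precisely the condition that $Y$ can occur as a prefix of some $\varphi_p^{(k)}(b[1,k])$ with $\varphi_p^{(k)}$ a self-map of $\{1,2,\dotsc,k\}^k$. Since each coordinate of the fixed point $g_j(x)$ is a fixed natural number, hence $\le k$ for $k$ large, and since by continuity of $g_j$ together with $\diam(x[1,k]\cap E)\to0$ the sets $g_j(x[1,k]\cap E)$ shrink, one still obtains $\psi_{k,j}(x)\to\infty$ for every $x\in E$ and every $j$, and properties (P1) and (P2) carry over. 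The remainder of the argument is then copied word for word from the proof of Theorem~\ref{main1}: for $k$ large there is $\mathbf{p}_i^{(k)}\in\{1,\dotsc,l_k\}^{l_k}$ whose first $d$ coordinates give maps realizing $\varphi_{p_{i,j}^{(k)}}^{(k)}(x[1,k])[1,\psi_{k,j}(x)]=Y_{k,j}(x)$ for all $x\in E$ with $\psi_{k,j}(x)>0$, the associated time $q_k$ satisfies $\sigma^{j\cdot q_k}(x)[1,\psi_{k,j}(x)]=Y_{k,j}(x)$, and from $g_j(x)\in[Y_{k,j}(x)]$ and $\diam([Y_{k,j}(x)])\to0$ one concludes $\lim_{k\to\infty}\sigma^{j\cdot q_k}(x)=g_j(x)$. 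The only genuinely new ingredients beyond Theorem~\ref{main1} are this bounded-symbol refinement of $\psi_{k,j}$ (and the routine verification that it still tends to infinity) and the replacement of the dimension-one facts by the infinite-dimension estimates of Lemmas~\ref{jinzhiweishu} and~\ref{wuxianweishu}; I expect the only mildly delicate point to be the bookkeeping ensuring that all inserted $z$-blocks and $\varphi$-images respect the stage-$k$ alphabet, which is handled, as in (R1), by taking the parameters $A_i^{(n)}$ large enough.
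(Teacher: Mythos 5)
Your proposal follows essentially the same route as the paper's proof: the same $W=\prod_i\{1,\dotsc,i\}$, the same $\Delta_\infty$, $\theta_{w_n}$, $B_n$, $\xi$ apparatus transplanted from Theorem~\ref{main1}, with Lemmas~\ref{jinzhiweishu} and~\ref{wuxianweishu} replacing the dimension-one estimates and compactness of $\Delta_\infty(W)$ guaranteeing the finiteness needed in the Xiong-chaos step. If anything you are slightly more explicit than the paper about the one genuinely new wrinkle (restricting the approximating words $Y_{k,j}$ to symbols $\le k$ so they are realizable by self-maps of $\{1,\dotsc,k\}^k$, and checking $\psi_{k,j}(x)\to\infty$ still holds); the argument is correct.
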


\begin{proof}

 Let $ n\in\mathbb{N}$ and  $l_n=(n^n)^{n^n} $.  We  list all the 
  self-maps on $ \{1,2,\dotsc,n\}^n $ as $ 
   \varphi^{(n)}_1,\varphi^{(n)}_2,\dotsc,\varphi^{(n)}_{l_n}
    $.  We can define a
     map $ \Delta_{\infty}:\Sigma_{\infty}\to 
    \Sigma_{\infty} $ similar to $ 
    \Delta_{N} $ in the proof of Theorem~\ref{main1}. 
Set $  W=\prod_{i=1}^{\infty}\{1,2,\dots, i \} $.
For any $ n\geq 1 $ and $ 
v=v_1v_2\dotsb v_n\in\mathbb{N}^{n} $, 
we 
can also define a map $ 
\theta_v:\Sigma_{\infty}\to \Sigma_{\infty} 
$ by $ \theta_v(x)=v_1v_2\dotsb v_nx_{n+1}\dotsb $ for any $ x\in \Sigma_{\infty} $. Set $ \{w_n:n\geq 
1\}=\bigcup_{k=  1}^{\infty}\mathbb{N}^k  $, $ 
B_n= W\cap  [12\dotsc n 1 ] $,
$ B=\bigcup_{n= 1}^{\infty}\Delta_{\infty}(B_n) $ and $ C=\bigcup_{n= 1}^{\infty} 
\theta_{w_n}\circ\Delta_{\infty}(B_n) $.  It is easy to see that each $ B_n $ is not empty and pairwise disjoint closed subset, so is $ \Delta_{\infty}(B_n) $ for any $ n\geq 1 $. 

Similarly, we can also define a continuous  map $ \xi\colon B\to C $ such that $ \xi|_{\Delta_{\infty}(B_n)}=\theta_{w_n}|_{\Delta_{\infty}(B_n)} $.
 Obviously, $ C $ is contained in the multiply proximal cell of $ z $.  We proceed to show that $ C $ is with full Hausdorff dimension everywhere and then show its chaotic property.      
For any non-empty open subset $ U $ of $ \Sigma_{\infty} $, there exists some $ w_n $ such that $ [w_n]\subset U $, so $ C\cap U $ contains $ \theta_{w_n}\circ \Delta_{\infty}(B_n) $. 
According to Lemma~\ref{jinzhiweishu},  Lemma~\ref{wuxianweishu}, and the same explanation in the proof of Theorem~\ref{main1}, we know that $ \dimh(\theta_{w_n}\circ \Delta_{\infty}(B_n)) =\infty$. 

The remaining part is to show that $ C $ is a multiply Xiong chaotic set.     Most of this part is similar to the full shift over finite symbols. We only explain the difference here. 
As $ \xi\colon B\to C $ is continuous, it is sufficient to show that $ B $ is a multiply Xiong chaotic set. Let $ E\subset B $ and $ g_j\colon E\to\Sigma_{\infty} $ be a continuous map for $ j=1,2,\dotsc,d $. Similarly, for $ j=1,2,\dotsc, d $, any $ x\in E $,  and any $ k\geq 1 $, we can define an integer $ \psi_{k,j}(x) $ as the proof of Theorem \ref{main1} and all the results about $ \psi_{k,j}(x) $ are also valid.  
Observe that $ B$ is a subset of the compact set $\Delta_{\infty}(W) $, the set $ \{x[1,k]\colon x\in E \} $ is finite. By (P1), we know that the set $ \bigcup_{x\in E }\{Y_{k,j}(x)\colon 1\leq j\leq d \} $ is finite. 
Then we can carry on   applying the 
     method in the proof of Theorem~\ref{main1}
to show that $ C $ 
     is a multiply Xiong chaotic set in 
     $      (\Sigma_{\infty},\sigma) $. 
   
 \end{proof}

\subsection{Proof of Theorem~\ref{main2}}
By Propositions~\ref{project} and~\ref{countable},
we should estimate the Hausdorff dimension of
the image of the multiply Xiong chaotic under the map $\phi$.
To this end, we need some results of 
the continued fraction and we refer the reader 
to  \cite{Io2002}  for more details. 
Assume that the infinite continued fraction of 
$ x \in [0,1)\setminus \mathbb{Q}$  is $ x=[a_1,a_2,\dotsc ] $. 
For any $n\in\mathbb{N}$, we call the rational number 
$[a_1,a_2,\dotsc,a_n] $  the $ n $-th convergent of $ x $ and 
denote it by 
\[ 
\frac{p_n(a_1,a_2,\dotsc ,a_n)}{q_n(a_1,a_2,\dotsc ,a_n)}= [a_1,a_2,\dotsb ,a_n]
\]    
where $ p_n,q_n\in\mathbb{N} $ and  $ (p_n,q_n)=1 $.
If we set $ p_0=q_{-1}=0 $, $ p_{-1}=q_{0}=1 $, then for any 
$ n\in\mathbb{N} $,    
\begin{align*}
 &q_{n}=a_n q_{n-1}+q_{n-2},\\
 &p_{n}=a_n p_{n-1}+p_{n-2}, \\
 &p_nq_{n-1}-p_{n-1}q_n=(-1)^{n+1},  \\      
 &q_n\geq 2^{\frac{n-1}{2}}.
 \end{align*}
For any $ n\in\mathbb{N} $ and $ i_1i_2\dotsc i_n\in\mathbb{N}^n $,   the fundamental interval is defined  by 
\[
I^n(i_1,i_2,\dotsc , i_n)=\{x\in (0,1) : a_j(x)=i_j, 1\leq j\leq n \}
.
\]
The endpoints of $ I^n(i_1,i_2,\dotsc , i_n) $
are $ \frac{p_n+p_{n-1}}{q_n+q_{n-1}} $ and 
$ \frac{p_n}{q_n} $ where $p_n= p_n(i_1,i_2,\dotsc,  i_n) $ 
and $ q_n= q_n(i_1,i_2,\dotsc,  i_n)  $. 
More precisely,
 in the case when $ n $ is odd, 
\[
I^n(i_1,i_2,\dotsc,  i_n) =
\biggl [ 
 \frac{p_n+p_{n-1}}{q_n+q_{n-1}},\frac{p_n}{q_n} 
 \biggr);\]
and in the case  when $ n $ is even, 
\[
 I^n(i_1,i_2,\dotsc , i_n)= \biggl( 
 \frac{p_n}{q_n},\frac{p_n+p_{n-1}}{q_n+q_{n-1}} 
 \biggr] .
 \]
Whatever, the Lebesgue measure of 
 the interval $ I^n(i_1,i_2,\dotsc,  i_n)$ is  
 $\frac{1}{q_n(q_n+q_{n-1}) } $.
We will need the following useful results.

\begin{lem}[\cite{Wu2006}]\label{DZ}
 For any $ n\geq 1 $ and $ 1\leq k\leq n $, it has that \[\frac{a_k+1}{2}\leq \frac{q_n(a_1,a_2,\dotsc ,a_n)}{q_{n-1}(a_1,a_2,\dotsc ,a_{k-1},a_{k+1},\dotsc ,a_n) }\leq a_k+1. \]
\end{lem}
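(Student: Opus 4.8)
The plan is to re-express both the numerator and the denominator of the ratio using the Euler continuant, so that the two claimed inequalities reduce to elementary sign estimates. Recall that $q_n(x_1,\dots,x_n)$ is precisely the Euler continuant: besides the recursion $q_n=x_nq_{n-1}+q_{n-2}$ recalled in the excerpt, it is invariant under reversing its arguments, it is non-decreasing when a term is appended (this is where $x_i\geq1$ enters), and it obeys the addition formula
\[
q_m(x_1,\dots,x_m)=q_j(x_1,\dots,x_j)\,q_{m-j}(x_{j+1},\dots,x_m)+q_{j-1}(x_1,\dots,x_{j-1})\,q_{m-j-1}(x_{j+2},\dots,x_m)
\]
for $1\leq j\leq m-1$; all of these follow from the recursion by a straightforward induction (see also \cite{Io2002}).

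I would first settle the two extreme positions. For $k=n$ the recursion gives $q_n(a_1,\dots,a_n)/q_{n-1}(a_1,\dots,a_{n-1})=a_n+q_{n-2}(a_1,\dots,a_{n-2})/q_{n-1}(a_1,\dots,a_{n-1})$, and since $q_{n-1}\geq q_{n-2}\geq0$ this ratio lies in $[a_n,a_n+1]\subseteq[\tfrac{a_n+1}{2},a_n+1]$, using $a_n\geq1$. For $k=1$, the ``front'' form of the recursion $q_n(a_1,\dots,a_n)=a_1q_{n-1}(a_2,\dots,a_n)+q_{n-2}(a_3,\dots,a_n)$ (a consequence of reversal symmetry) together with $q_{n-1}(a_2,\dots,a_n)\geq q_{n-2}(a_3,\dots,a_n)$ gives the same conclusion.

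Then I would handle $2\leq k\leq n-1$. Set $P=q_{k-1}(a_1,\dots,a_{k-1})$, $P'=q_{k-2}(a_1,\dots,a_{k-2})$, $S=q_{n-k}(a_{k+1},\dots,a_n)$, $S'=q_{n-k-1}(a_{k+2},\dots,a_n)$, reading empty continuants as $q_0=1$; then $P,P',S,S'\geq1$ with $P'\leq P$ and $S'\leq S$. Applying the addition formula to the numerator at $j=k$ and using $q_k(a_1,\dots,a_k)=a_kP+P'$ yields
\[
q_n(a_1,\dots,a_n)=(a_kP+P')S+PS',
\]
and applying it to the list with $a_k$ deleted, at $j=k-1$, yields $q_{n-1}(a_1,\dots,a_{k-1},a_{k+1},\dots,a_n)=PS+P'S'$. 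Hence the ratio in question equals $\bigl((a_kP+P')S+PS'\bigr)/(PS+P'S')$. The upper bound is then the identity
\[
(a_k+1)(PS+P'S')-\bigl((a_kP+P')S+PS'\bigr)=(P-P')(S-S')+a_kP'S'\geq0,
\]
and for the lower bound I would discard the nonnegative summand $PS'$ in the numerator and replace $P'S'$ by $P'S\geq P'S'$ in the denominator to get
\[
\frac{(a_kP+P')S+PS'}{PS+P'S'}\geq\frac{(a_kP+P')S}{(P+P')S}=\frac{a_kP+P'}{P+P'}\geq\frac{a_k+1}{2},
\]
the last step being $2(a_kP+P')-(a_k+1)(P+P')=(a_k-1)(P-P')\geq0$.

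The one place demanding attention is the bookkeeping of continuant indices at the boundary values $k=2$ and $k=n-1$ — one must check that the splitting positions used in the addition formula are legitimate and that the ``missing'' pieces collapse to $q_0=1$ — together with the two monotonicity facts $P'\leq P$ and $S'\leq S$, which are exactly where $a_i\geq1$ is used; the rest is routine algebra. An alternative would be to fix $k$ and induct on $n$: for $n\geq k+2$ the ratio $q_n(a_1,\dots,a_n)/q_{n-1}(a_1,\dots,\widehat{a_k},\dots,a_n)$ is the weighted mediant, with weight $a_n$, of the same ratios at $n-1$ and $n-2$ and hence stays in $[\tfrac{a_k+1}{2},a_k+1]$; there the mild nuisance is verifying the base case $n=k+1$ by hand.
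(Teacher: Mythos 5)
The paper does not prove this lemma at all: it is quoted directly from \cite{Wu2006}, so there is no internal proof to compare against. Your argument is correct and complete. The two continuant splittings are right: with $P=q_{k-1}(a_1,\dots,a_{k-1})$, $P'=q_{k-2}(a_1,\dots,a_{k-2})$, $S=q_{n-k}(a_{k+1},\dots,a_n)$, $S'=q_{n-k-1}(a_{k+2},\dots,a_n)$ one indeed gets numerator $(a_kP+P')S+PS'$ and denominator $PS+P'S'$, the identity $(a_k+1)(PS+P'S')-\bigl((a_kP+P')S+PS'\bigr)=(P-P')(S-S')+a_kP'S'$ checks out, and the chain
\[
\frac{(a_kP+P')S+PS'}{PS+P'S'}\geq\frac{(a_kP+P')S}{(P+P')S}=\frac{a_kP+P'}{P+P'}\geq\frac{a_k+1}{2}
\]
is valid because enlarging the denominator ($P'S'\leq P'S$) and shrinking the numerator (dropping $PS'$) both push the ratio down, and $2(a_kP+P')-(a_k+1)(P+P')=(a_k-1)(P-P')\geq0$. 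The monotonicity facts $P\geq P'\geq1$ and $S\geq S'\geq1$ follow from the recursion with all partial quotients at least $1$, the boundary cases $k=1$ and $k=n$ are handled correctly via the reversal symmetry and the plain recursion, and the degenerate indices at $k=2$ and $k=n-1$ collapse to $q_0=1$ exactly as you say. This is essentially the standard continuant argument one would find in \cite{Wu2006} or \cite{Io2002}; your write-up makes the paper's citation self-contained.
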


\begin{lem}[\cite{Hu2014}] \label{In-AP}
 There exists a positive number $ \lambda $ such that
 \[\frac{1}{\lambda}|I(\mu)||I(\nu)|\leq |I(\mu\nu)|\leq \lambda|I(\mu)||I(\nu)|  \]
 where $ \mu=u_1u_2\dotsc  
 u_n\in\mathbb{N}^n$ and  $\nu=v_1v_2\dotsc  
 v_k\in\mathbb{N}^k$ for any $ 
 k,n\in\mathbb{N} $.
\end{lem}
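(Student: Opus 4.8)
\textbf{Proof proposal for Lemma~\ref{In-AP}.}
The plan is to reduce the claim to the relation between the length of a fundamental interval and the continuants $q_n$. Recall that $|I^n(i_1,\dotsc,i_n)|=\frac{1}{q_n(q_n+q_{n-1})}$, so up to the universal factor $1<\frac{q_n+q_{n-1}}{q_n}\leq 2$ we have $|I^n(i_1,\dotsc,i_n)|\asymp q_n(i_1,\dotsc,i_n)^{-2}$. Hence it suffices to find a constant $\lambda'$ (depending only on nothing) so that for $\mu=u_1\dotsb u_n$ and $\nu=v_1\dotsb v_k$, the concatenation $\mu\nu=u_1\dotsb u_nv_1\dotsb v_k$ satisfies
\[
\frac{1}{\lambda'}\,q_n(\mu)\,q_k(\nu)\ \leq\ q_{n+k}(\mu\nu)\ \leq\ \lambda'\,q_n(\mu)\,q_k(\nu);
\]
squaring and inserting the bounded factors $\frac{q+q'}{q}\in(1,2]$ at each of the three places then yields the lemma with $\lambda$ a fixed multiple of $\lambda'^2$.

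For the continuant inequality I would use the standard ``splitting'' identity for continuants: writing $q_{n+k}(\mu\nu)$ in terms of the numerators/denominators of the two blocks, one has
\[
q_{n+k}(u_1\dotsb u_nv_1\dotsb v_k)=q_n(u_1\dotsb u_n)\,q_k(v_1\dotsb v_k)+q_{n-1}(u_1\dotsb u_{n-1})\,q_{k-1}(v_2\dotsb v_k),
\]
which is the classical continuant concatenation formula (it follows by iterating the recursion $q_m=a_mq_{m-1}+q_{m-2}$, or equivalently from the matrix product representation $\bigl(\begin{smallmatrix}a&1\\1&0\end{smallmatrix}\bigr)$ for each partial quotient). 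From this identity the upper bound $q_{n+k}(\mu\nu)\leq 2\,q_n(\mu)q_k(\nu)$ is immediate since $q_{n-1}(u_1\dotsb u_{n-1})\leq q_n(\mu)$ and $q_{k-1}(v_2\dotsb v_k)\leq q_k(\nu)$, and the lower bound $q_{n+k}(\mu\nu)\geq q_n(\mu)q_k(\nu)$ is even more immediate because the second summand is nonnegative. So in fact $\lambda'=2$ works, and one may take $\lambda$ to be an absolute constant such as $16$.

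The only point requiring a little care is the degenerate indices: when $n=1$ or $k=1$ the terms $q_{n-1}(u_1\dotsb u_{n-1})$, $q_{k-1}(v_2\dotsb v_k)$ must be interpreted via the conventions $q_0=1$, $q_{-1}=0$ already fixed in the text, and one checks the identity still holds (the $k=1$ case reads $q_{n+1}(u_1\dotsb u_nv_1)=q_n(\mu)\cdot v_1\cdot 1 + q_{n-1}\cdot 1$, which is just the recursion). I expect the main obstacle, such as it is, to be merely bookkeeping: stating the continuant concatenation identity cleanly and verifying it in the boundary cases. Everything else is a one-line estimate. Alternatively, one can avoid the identity altogether and argue directly with the fundamental intervals: $I^{n+k}(\mu\nu)$ is, under the map $T^n$ which acts as a Möbius transformation $x\mapsto\frac{p_{n-1}x+p_n}{q_{n-1}x+q_n}$ (up to orientation) on $I^n(\mu)$, the preimage of $I^k(\nu)$; the derivative of this Möbius map on $I^n(\mu)$ lies between $\frac{1}{(q_n+q_{n-1})^2}$ and $\frac{1}{q_n^2}$, and since these two bounds differ by a factor at most $4$, the distortion estimate $|I(\mu\nu)|\asymp |I(\mu)|\,|I(\nu)|$ follows with $\lambda=4$. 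I would present whichever of these two routes is shorter given the conventions already set up in the excerpt; both give an absolute constant $\lambda$, independent of $n$, $k$, and the digits.
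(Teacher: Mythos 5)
The paper does not prove this lemma at all: it is imported verbatim from \cite{Hu2014}, so there is no in-paper argument to compare against. Your proposal is a correct, self-contained proof, and both routes you sketch work. The first route is sound: the continuant concatenation identity $q_{n+k}(\mu\nu)=q_n(\mu)q_k(\nu)+q_{n-1}(u_1\dotsb u_{n-1})\,q_{k-1}(v_2\dotsb v_k)$ is the classical one (and your boundary-case check via $q_0=1$, $q_{-1}=0$ matches the conventions already fixed in Section 4.3), the inequality $q_{k-1}(v_2\dotsb v_k)\leq q_k(\nu)$ follows from the front-expansion recursion $K(v_1,\dotsc,v_k)=v_1K(v_2,\dotsc,v_k)+K(v_3,\dotsc,v_k)$, and combining $q_n(\mu)q_k(\nu)\leq q_{n+k}(\mu\nu)\leq 2\,q_n(\mu)q_k(\nu)$ with $\tfrac{1}{2}q_m^{-2}\leq |I^m|\leq q_m^{-2}$ gives an absolute $\lambda$ (your $\lambda=16$ is safely sufficient; $\lambda=8$ already works). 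The second route, via the bounded distortion of the inverse branch $y\mapsto\frac{p_n+p_{n-1}y}{q_n+q_{n-1}y}$ of $T^n$ on $[0,1]$, whose derivative has absolute value between $(q_n+q_{n-1})^{-2}$ and $q_n^{-2}$, is the argument one typically finds in the continued-fraction literature and is equally valid. Either write-up would serve as a proof of the cited lemma.
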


\begin{thm}[\cite{J28}] \label{thm:J28}
For any $ k>8 $,
\[1-\frac{4}{k\ln2}\leq \dimh(\phi(\Sigma_k))\leq 1-\frac{1}{8k\ln k}.\]
\end{thm}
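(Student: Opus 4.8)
The plan is to realize $E_k:=\phi(\Sigma_k)\subset[0,1)$ --- the set of irrationals in $[0,1)$ all of whose partial quotients lie between $1$ and $k$ --- as the limit set of the conformal iterated function system $\{\psi_i(x)=\tfrac1{i+x}:1\le i\le k\}$ on $[0,1]$, and to pin down its Hausdorff dimension through the associated pressure. For $n\ge1$ set $E_{k,n}=\bigcup\{\,I^n(i_1,\dots,i_n):1\le i_1,\dots,i_n\le k\,\}$, so that $E_k=\bigcap_n E_{k,n}$ up to a countable set of rational endpoints, and the $k^n$ level-$n$ fundamental intervals with digits $\le k$ cover $E_k$ by sets of diameter $\le 2^{1-n}$. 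By Bowen's formula for this system --- it satisfies the open set condition and, the digits being bounded by $k$, has bounded distortion (see e.g.\ \cite{Io2002}) --- one has $\dimh E_k=s_k$, the unique $s\ge0$ with
\[
P(s):=\lim_{n\to\infty}\frac1n\log\sum_{1\le i_1,\dots,i_n\le k}\bigl|I^n(i_1,\dots,i_n)\bigr|^{s}=0,
\]
the limit existing by Fekete's lemma because $n\mapsto\sum|I^n|^s$ is submultiplicative up to a constant factor (e.g.\ by Lemma~\ref{In-AP}). Concretely, the upper bound $\dimh E_k\le s_k$ is just the covering by the level-$n$ intervals, while the lower bound $\dimh E_k\ge s_k$ is the mass distribution principle applied to the conformal (Gibbs) measure of the potential $-s_k\log|\psi'|$. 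Since $P$ is continuous and strictly decreasing, it then suffices to check $P\bigl(1-\tfrac{4}{k\ln2}\bigr)\ge0$ and $P\bigl(1-\tfrac{1}{8k\ln k}\bigr)<0$.

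The computational core is a two-sided estimate for $\mathcal L(E_{k,n})=\sum_{i_j\le k}|I^n(i_1,\dots,i_n)|$. Fix a word $w$ of length $n$, put $\delta=q_{n-1}(w)/q_n(w)\in[0,1]$, and use $|I^n|=\tfrac1{q_n(q_n+q_{n-1})}$ together with $q_{n+1}(wi)=iq_n(w)+q_{n-1}(w)$; a telescoping sum gives
\[
\sum_{i=1}^{k}\frac{|I^{n+1}(wi)|}{|I^{n}(w)|}=(1+\delta)\sum_{i=1}^{k}\Bigl(\tfrac1{i+\delta}-\tfrac1{i+1+\delta}\Bigr)=1-\frac{1+\delta}{k+1+\delta}\in\Bigl[1-\tfrac{2}{k+2},\,1-\tfrac1{k+1}\Bigr].
\]
Summing over all level-$n$ words and iterating from $\mathcal L(E_{k,0})=1$ yields $\bigl(1-\tfrac2{k+2}\bigr)^{n}\le\mathcal L(E_{k,n})\le\bigl(1-\tfrac1{k+1}\bigr)^{n}$ for every $n\ge1$. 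Combining this with the elementary continuant bounds $2^{(n-1)/2}\le q_n\le(k+1)^{n}$ --- the second because every digit of $w$ lies between $1$ and $k$ --- which give $\tfrac1{2(k+1)^{2n}}\le|I^n(w)|\le 2^{1-n}$, and writing $s=1-\varepsilon$, one bounds $|I^n(w)|^{-\varepsilon}$ from above and below and extracts
\[
2^{\varepsilon}\Bigl(1-\tfrac2{k+2}\Bigr)\ \le\ e^{P(1-\varepsilon)}\ \le\ (k+1)^{2\varepsilon}\Bigl(1-\tfrac1{k+1}\Bigr).
\]
For the lower bound take $\varepsilon=\tfrac4{k\ln2}$: then $\varepsilon\ln2=\tfrac4k\ge\ln\!\bigl(1+\tfrac2k\bigr)=\ln\tfrac{k+2}{k}$, so the left-hand side is $\ge1$, hence $P\bigl(1-\tfrac4{k\ln2}\bigr)\ge0$ and $s_k\ge1-\tfrac4{k\ln2}$. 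For the upper bound take $\varepsilon=\tfrac1{8k\ln k}$: then $2\varepsilon\ln(k+1)=\tfrac{\ln(k+1)}{4k\ln k}<\tfrac1{2k}\le\ln\!\bigl(1+\tfrac1k\bigr)$ once $\ln(k+1)<2\ln k$, i.e.\ $k\ge2$, so the right-hand side is $<1$, hence $P\bigl(1-\tfrac1{8k\ln k}\bigr)<0$ and $s_k<1-\tfrac1{8k\ln k}$. The hypothesis $k>8$ leaves comfortable room in all of these inequalities.

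The step I expect to be the genuine obstacle is the identification $\dimh E_k=s_k$: this requires investing just enough thermodynamic formalism --- bounded distortion of the Gauss map restricted to words with digits $\le k$, existence of a conformal (Gibbs) measure $\nu$ on $E_k$ with $\nu(I^n(w))\asymp|I^n(w)|^{s_k}$ uniformly in the word $w$, and the mass distribution principle --- to license Bowen's formula for this non--self-similar conformal system. Everything afterward is bookkeeping driven solely by the telescoping identity above and the crude estimates $2^{(n-1)/2}\le q_n\le(k+1)^{n}$.
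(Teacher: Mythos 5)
This theorem is quoted from Jarn\'ik \cite{J28}; the paper itself gives no proof of it, so the only internal point of comparison is the proof of Proposition~\ref{dim}, which the authors describe as ``inspired by the proof of Theorem~\ref{thm:J28}'' and which adapts Jarn\'ik's original argument. Measured against that, your proposal is correct in substance but takes a genuinely different route. The computational core coincides exactly: your telescoping identity $\sum_{i=1}^{k}|I^{n+1}(wi)|/|I^{n}(w)|=1-\tfrac{1+\delta}{k+1+\delta}$ is precisely the paper's computation of $\sum_{i=1}^{k}|I^n(\dotsc,i)|$ at the start of the Claim in Proposition~\ref{dim}, and your choices $\varepsilon=\tfrac{4}{k\ln 2}$ and $\varepsilon=\tfrac{1}{8k\ln k}$ are verified correctly (the upper bound, via $\sum_w|I^n(w)|^s\to 0$, needs nothing beyond the covering by level-$n$ cylinders). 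The difference is entirely in how the lower bound for the dimension is extracted. You route it through the pressure function and Bowen's formula $\dimh E_k=s_k$ for the finite conformal IFS $\{1/(i+x)\}_{i\le k}$, which is clean and generalizes, but it outsources the hard direction (the existence of a conformal measure with $\nu(I^n(w))\asymp|I^n(w)|^{s_k}$ plus the mass distribution principle) to a cited black box --- and you correctly identify this as the genuine obstacle. Jarn\'ik's argument, as reproduced in Proposition~\ref{dim}, does that work explicitly and elementarily: it takes an arbitrary finite cover, replaces each covering set by a fundamental interval at the cost of a factor $3k^3$ (this separation estimate between sibling cylinders with digits $\le k$ is exactly the geometric input your mass-distribution step would also need), and then uses the inequality $|I^{n-1}|^s\le\sum_{i=1}^{k}|I^n(\dotsc,i)|^s$ to merge $k$ siblings into their parent repeatedly until the cover collapses to a single level, yielding $\mathcal{H}^s>0$ directly. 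Your approach buys conceptual transparency and both bounds from one pressure estimate; the paper's buys a self-contained elementary proof with no thermodynamic formalism, at the price of the bookkeeping with $\mathcal{G}'$, $\mathcal{C}$, $\mathcal{M}$, $\mathcal{J}$. One minor technical point you should at least acknowledge if you keep the IFS framing: $\psi_1(x)=1/(1+x)$ has $|\psi_1'(0)|=1$, so the system is only eventually contracting and the standard bounded-distortion/Bowen machinery has to be applied to a second-level iterate.
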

 
Inspired by the proof of Theorem~\ref{thm:J28} in \cite{J28},
we have the following result.

 \begin{prop}\label{dim}
 Let $ 
 W=\prod_{i=1}^{\infty}\{1,2,\dotsc,i\} $.
 For any $ l\geq 1 $,  any $ 
 i_1,i_2,\dotsc,i_l\in\mathbb{N} $, and any 
 integer   $ k>\max\{4,i_1,i_2,\dotsc,i_l,l+1\} 
 $,   
\[\dimh(\phi\circ\theta_{i_1i_2\dotsc i_l}(W
 )\cap \phi(\Sigma_k))\geq 1-\frac{4}{k\ln 2} .\]
\end{prop}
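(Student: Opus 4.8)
The plan is to mimic the lower-bound half of the proof of Theorem~\ref{thm:J28}, which produces a Cantor-like subset of $\phi(\Sigma_k)$ of Hausdorff dimension at least $1-\tfrac{4}{k\ln 2}$ by a mass-distribution / Frostman argument on fundamental intervals, and to check that the finitely many extra coordinate constraints coming from $\theta_{i_1\dotsc i_l}$ and from the truncation $W=\prod_{i}\{1,\dots,i\}$ do not destroy the dimension estimate. The key point is that $\phi\circ\theta_{i_1i_2\dotsc i_l}(W)$, intersected with $\phi(\Sigma_k)$, consists of all continued fractions $[a_1,a_2,\dots]$ whose first $l$ digits are the prescribed values $i_1,\dots,i_l$ and whose remaining digits $a_{m}$ (for $m>l$) satisfy $a_m\le \min\{m, k\}$; since $k>\max\{i_1,\dots,i_l,l+1\}$, for all $m\ge l+1$ we have $\min\{m,k\}=k$ once $m\ge k$, and in the finite range $l<m<k$ the constraint $a_m\le m$ is strictly weaker near the top than $a_m\le k$ would be but still allows every digit in $\{1,\dots,l+1\}$ at least — in any case it permits a fixed infinite product of digit-sets each of size $\ge k-O(1)$ eventually, which is all the Jarník-type argument needs.

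First I would set up the symbolic model precisely: let $\Omega$ be the set of sequences $(a_1,a_2,\dots)$ with $a_j=i_j$ for $1\le j\le l$ and, for $m>l$, $a_m\in\{1,2,\dots,\min\{m,k\}\}$; then $\phi(\Omega)=\phi\circ\theta_{i_1\dotsc i_l}(W)\cap\phi(\Sigma_k)$. Since only finitely many coordinates ($m\le k$) have a digit-bound smaller than $k$, and since $\phi(\Sigma_k)$ itself is (up to a bi-Lipschitz-on-each-cylinder change) the full shift on $k$ symbols localized inside a single fundamental interval $I^l(i_1,\dots,i_l)$, I would reduce to estimating $\dimh$ of a set of the form (a fixed fundamental interval of level $l$) $\cap$ (continued fractions with all digits beyond level $l$ in $\{1,\dots,k\}$). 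Restricting a set to a fundamental interval does not change its Hausdorff dimension when the set is already contained in it, and prepending a fixed finite word $i_1\dots i_l$ is, by Lemma~\ref{In-AP}, a map comparable to scaling by the constant $|I^l(i_1,\dots,i_l)|$ up to the factor $\lambda$, hence bi-Lipschitz in the relevant range and dimension-preserving. So the problem collapses to: $\dimh$ of the set of $x\in[0,1)\setminus\mathbb Q$ all of whose continued-fraction digits lie in $\{1,\dots,k\}$, which is exactly $\dimh(\phi(\Sigma_k))\ge 1-\tfrac{4}{k\ln 2}$ by Theorem~\ref{thm:J28}.

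The steps in order: (1) identify $\phi\circ\theta_{i_1\dotsc i_l}(W)\cap\phi(\Sigma_k)$ with $\phi(\Omega)$ and observe $\Omega\supseteq\{(i_1,\dots,i_l,b_{l+1},b_{l+2},\dots): b_m\in\{1,\dots,k\}\ \forall m>l\}$ because $k>l+1$ forces $\min\{m,k\}\ge \min\{l+1,k\}=l+1$ hmm — more carefully, for $l<m<k$ we only get $a_m\le m$, which contains $\{1,\dots,l+1\}$ but maybe not all of $\{1,\dots,k\}$; (2) therefore instead of containing the full $\Sigma_k$-tail, $\Omega$ contains the tail-set where digit $a_m$ ranges over $\{1,\dots,\min(m,k)\}$, and since this differs from the $\{1,\dots,k\}$-constraint in only the finitely many positions $l<m<k$, I would invoke that changing a digit constraint in finitely many coordinates of a continued-fraction shift space does not change Hausdorff dimension (this follows again from Lemma~\ref{In-AP}: the resulting set is a finite union of bi-Lipschitz copies, inside fundamental intervals, of the set with only the uniform tail constraint); (3) apply Theorem~\ref{thm:J28}'s lower bound to conclude $\dimh\ge 1-\tfrac{4}{k\ln 2}$; (4) use $\dimh(A)\le\dimh(B)$ for $A\subseteq B$ together with the fact that we produced a subset of $\phi\circ\theta_{i_1\dotsc i_l}(W)\cap\phi(\Sigma_k)$ of that dimension, giving the claimed inequality.

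The main obstacle I anticipate is step~(2): making rigorous that imposing the position-dependent bound $a_m\le\min\{m,k\}$ rather than $a_m\le k$ costs nothing in dimension. The clean way is not to route through Theorem~\ref{thm:J28} as a black box but to re-run its mass-distribution argument: one builds a probability measure on $\phi(\Omega)$ supported on the fundamental intervals $I^n(i_1,\dots,i_l,b_{l+1},\dots,b_n)$ and, using $q_n\ge 2^{(n-1)/2}$, the recursion $q_n=a_nq_{n-1}+q_{n-2}$, Lemma~\ref{DZ}, and $|I^n|=\tfrac{1}{q_n(q_n+q_{n-1})}$, checks a Frostman-type local lower bound $\mu(B(x,r))\le C r^{s}$ with $s=1-\tfrac{4}{k\ln 2}$; the finitely many anomalous levels $m<k$ contribute only a bounded multiplicative constant to $\mu$ and to the $q_n$'s and so do not affect the exponent. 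I would choose the uniform measure giving each admissible digit $b_m\in\{1,\dots,\min(m,k)\}$ equal weight $\tfrac{1}{\min(m,k)}$; the entropy of this measure is $\sum \log\min(m,k)$ per step, which for large $m$ is $\log k$ per step, matching the Jarník count, while the Lyapunov/contraction rate is governed by $\log q_n\asymp \sum\log(\text{digit}+1)\le n\log(k+1)$, and the ratio entropy/Lyapunov tends to the same $1-\tfrac{4}{k\ln2}$ bound as in \cite{J28}. Assembling these estimates is routine but notationally heavy; everything else is bookkeeping with Lemmas~\ref{In-AP} and~\ref{DZ}.
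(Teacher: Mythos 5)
Your primary route --- fix admissible values of the finitely many anomalous digits $a_{l+1},\dots,a_{k-1}$ (say $a_m=1$), observe that the resulting subset
\[
\bigl\{[i_1,\dots,i_l,a_{l+1},\dots,a_{k-1},b_1,b_2,\dots]\colon b_j\le k \text{ for all } j\bigr\}
\]
is contained in $\phi\circ\theta_{i_1\dots i_l}(W)\cap\phi(\Sigma_k)$ and is a bi-Lipschitz copy of $\phi(\Sigma_k)$, then quote Theorem~\ref{thm:J28} --- is viable and genuinely different from the paper's proof. The paper does not use Theorem~\ref{thm:J28} as a black box; it re-runs Jarn\'{\i}k's covering argument from scratch on the constrained set: an arbitrary finite cover is converted into a disjoint family of fundamental intervals at a multiplicative cost $3k^3$ (this is where the separation estimate between $M^{m+1}(\dots,s)$ and $M^{m+1}(\dots,t)$ enters), and the intervals are then merged level by level using the inequality $\sum_{i=1}^{k}|I^n(\dots,i)|^s\ge|I^{n-1}(\dots)|^s$ for $s=1-\frac{4}{k\ln 2}$. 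Two caveats for your reduction: bi-Lipschitzness of the prepending map on a bounded-digit set does not follow from Lemma~\ref{In-AP} alone, since that lemma controls lengths of fundamental intervals but not distances between points; you also need the lower bound (of order $\frac{1}{3k^3}|I^m|$) on the gap between distinct digit-$\le k$ subintervals, which is exactly the computation the paper carries out. Also, Theorem~\ref{thm:J28} is stated only for $k>8$, while the proposition is nontrivial already for $k\in\{6,7,8\}$ (for $k=5$ the right-hand side is negative), so those cases are not literally covered by the quotation.

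The genuine flaw is in your fallback ``clean way''. The uniform measure giving each admissible digit $b_m\in\{1,\dots,\min(m,k)\}$ weight $\frac{1}{\min(m,k)}$ does \emph{not} satisfy a Frostman bound with exponent $s=1-\frac{4}{k\ln 2}$. Its entropy is $\log k$ per step, but its Lyapunov exponent is about $\frac1k\sum_{i=1}^{k}\log i\approx\log k-1$ per step, so the dimension of that measure is about $\frac{\log k}{2(\log k-1)}\to\frac12$ as $k\to\infty$, far below $1-\frac{4}{k\ln 2}\to 1$. The Jarn\'{\i}k exponent is achieved only by distributing mass on $I^n(\dots,i)$ proportionally to $|I^n(\dots,i)|^s$, which heavily favours small digits; the inequality $\sum_{i=1}^k|I^n(\dots,i)|^s\ge|I^{n-1}(\dots)|^s$ (the paper's Claim) is precisely what makes that conformal-type measure dominated by $|I^n|^s$. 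With the uniform weights the mass-distribution argument does not merely become ``notationally heavy'' --- it fails to give the stated exponent. So either carry out your first route carefully (supplying the separation estimate and the small-$k$ cases), or run the measure argument with the $s$-conformal weights.
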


\begin{proof}

For any $ n\geq 1 $, denote  \[ 
M^n_{a_1,a_2,\dotsc ,a_n}=\{\theta\in 
[0,1)\setminus\mathbb{Q}: \theta=[a_1,a_2,\dotsb 
,a_n,a_{n+1},\dotsb ] \text{ with } a_{n+1}\leq k \}  . \] 
For any $ n\geq k $, denote   
\[W_n=\bigcup_{a_{l+1}=1}^{l+1}\bigcup_{a_{l+2}=1}^{l+2}\dotsc
\bigcup_{a_k=1}^k\bigcup_{a_{k+1}=1}^k\dotsc 
\bigcup_{a_n=1}^k M^n_{i_1,i_2,\dotsc 
	,i_l,a_{l+1},\dotsc,a_n}  \]
and 
\[V_n=\bigcup_{a_{l+1}=1}^{l+1}\bigcup_{a_{l+2}=1}^{l+2}\dotsc
\bigcup_{a_k=1}^k\bigcup_{a_{k+1}=1}^k\dotsc 
\bigcup_{a_n=1}^k I^n_{i_1,i_2,\dotsc 
	,i_l,a_{l+1},\dotsc,a_n}.  \]
The following properties   are obvious by the definitions:
\begin{enumerate}
	\item $ I^{n+1}_{i_1,i_2,\dotsc 
		,i_l,a_{l+1},\dotsc ,a_{n+1}}\subset 
	I^{n}_{i_1,i_2,\dotsc 
		,i_l,a_{l+1},\dotsc ,a_n}  $  for any $ 
	n\geq k $.
	
	\item $ M^n_{i_1,i_2,\dotsc 
		,i_l,a_{l+1},\dotsc ,a_{n}}\subset  
	I^n_{i_1,i_2,\dotsc 
		,i_l,a_{l+1},\dotsc ,a_{n}} $ for any $ 
	n\geq k $.
	
	\item  If $ a_{n+1}\leq k $, then $ 
	I^{n+1}_{i_1,i_2,\dotsc 
		,i_l,a_{l+1},\dotsc ,a_{n+1}}\subset  
	M^n_{i_1,i_2,\dotsc 
		,i_l,a_{l+1},\dotsc ,a_{n}}  $ for 
	any $ 
	n\geq k $.
	
	\item  $ W_n=V_{n+1} $ for any $ n\geq k $.
	
	\item  $  V_k\supset W_k= V_{k+1}\supset 
	W_{k+1}= V_{k+2} \supset \dotsb  $
\end{enumerate}  
It is clear that   
   \begin{align*}
\phi\circ\theta_{i_1i_2\dotsc i_l}(W)\cap 
\phi(\Sigma_k)=& \{\theta\in 
[0,1)\setminus\mathbb{Q}:    a_j(\theta)=i_j \text { for }1\leq  j\leq l, \\ & a_{j}(\theta)\leq j 
\text{  for  }l< j< k, 
 \text { and } 
a_{j}(\theta) \leq k  \text{  for  }   j\geq  k
\}\\
=&\bigcap_{n=k}^\infty V_n =\bigcap_{n=k}^\infty W_n  ,
\end{align*} which is closed and perfect  
 because  $ V_n $ is not a singular and $ 
\lim_{n\to \infty}\diam( V_n) =0 $. 
It is clear that   $ \phi\circ\theta_{i_1i_2\dotsc i_l}(W)\cap \phi(\Sigma_k) $ is  compact. 
Let $ \delta>0 $ and $ L $ be a positive integer. Assume that   $ 
\mathcal{G}=\{B_1,B_2,\dotsc ,B_L \} $ is a finite $ \delta $-cover  (the diameter of each element of $ \mathcal{G}  $ is less than $ \delta $) of $ \phi\circ\theta_{i_1i_2\dotsc i_l}(W)\cap \phi(\Sigma_k) $ with $ 
B_j\cap \phi\circ\theta_{i_1i_2\dotsc i_l}(W)\cap 
\phi(\Sigma_k)   
$ containing  infinite points for $ j=1,2,\dotsc ,L $. 
In the closed set $ \phi\circ\theta_{i_1i_2\dotsc i_l}(W)\cap \phi(\Sigma_k)  $ we can pick 
  $ b_j=\inf(B_j\cap 
\phi\circ\theta_{i_1i_2\dotsc i_l}(W)\cap 
\phi(\Sigma_k)  ) $ and   $ c_j=\sup(B_j\cap 
\phi\circ\theta_{i_1i_2\dotsc i_l}(W)\cap 
\phi(\Sigma_k)  ) $ for $ 
j=1,2,\dotsc ,L $. 
Thus, $ \mathcal{G}'=\{[b_j,c_j]:j=1,2,\dotsc ,L 
\} $ is still a $ \delta $-cover of $ 
\phi\circ\theta_{i_1i_2\dotsc i_l}(W)\cap 
\phi(\Sigma_k) $ by  replacing $ B_i $ with $ 
[b_i,c_i] $ in $ \mathcal{G }$ for $j=1,2,\dotsc 
,L  $. It is clear that   
\[\sum_{j=1}^{L} (c_j-b_j)^s \leq \sum_{j=1}^L 
 \diam(B_j) ^s\leq \sum_{B\in \mathcal{B}}  \diam(B) ^s  .\]
Since both 
 $ b_j $ and $ c_j $ are elements of   
 \[\phi\circ\theta_{i_1i_2\dotsc i_l}(W)\cap 
 \phi(\Sigma_k)=\bigcap_{n=k}^\infty 
 W_n=\bigcup_{a_{l+1}=1}^{l+1}\bigcup_{a_{l+2}=1}^{l+2}\dotsc
 \bigcup_{a_k=1}^k\bigcup_{a_{k+1}=1}^k\dotsc 
 \bigcup_{a_n=1}^k M^n_{i_1,i_2,\dotsc 
 	,i_l,a_{l+1},\dotsc,a_n}  , \]
 we can set 
 \begin{align*}
 m+1=\max\{&n\geq k+1: \text{ there exist } 1\leq 
 s\neq t\leq k,    
 a_{l+1},a_{l+2},\dotsc ,a_{n-1}  \\
 & \text{ with }  1\leq a_q\leq q    \text{ for 
 }   l+1\leq q\leq k-1,   \text{ and }   a_q\leq 
 k   \text{ for any }  k\leq q\leq n-1   , \\
 & \text{ such that } c_j\in 
 M^n_{i_1,i_2,\dotsc,i_l,a_{l+1},\dotsc 
 	,a_{n-1},s} \text{ and } b_j\in 
 M^n_{i_1,i_2,\dotsc,i_l,a_{l+1},\dotsc,a_{n-1},t} 
 \}. 
 \end{align*}
It is not hard to show that  the following result:
For every $ 
[b_j,c_j]\in \mathcal{G}' $ with $ j=1,2,\dotsc ,L 
$, there exist $ m\geq k $, $ 
a_{l+1},a_{l+2},\dotsc 
,a_m $ with 
$1\leq a_q\leq q $ for $ l+1\leq q\leq k-1 $,  
and $ a_q\leq k $ for any $ k\leq q\leq m $ such 
that $ [b_j,c_j]\subset 
M^m_{i_1,i_2,\dotsc.i_l,a_{l+1},\dotsc ,a_m} 
$.  And there also exist two distinct integers $ s $ and $ t $ with $ 1\leq s\neq  t\leq k $ such that 
$ [b_j,c_j]\cap 
M^{m+1}_{i_1,i_2,\dotsc.i_l,a_{l+1}, 
	,a_m,s}\neq \emptyset  $ and  $ [b_j,c_j]\cap 
M^{m+1}_{i_1,i_2,\dotsc.i_l,a_{l+1},\dotsc 
	,a_m,t}\neq \emptyset  $.

In the following, as only the endpoints of a close interval are needed, we use $ (a,b) $ to denote    $ [a,b) $ and $ [b,a) $
  for convenience.
Observe that  for any $ n\geq k $,  \begin{align*}
M^n_{i_1,i_2,\dotsc,i_l,a_{l+1},\dotsc 
	,a_n}&=\bigcup_{ 1\leq \bigl\lfloor  
	\frac{1}{u}\bigr\rfloor \leq k } 
I^n( i_1,i_2,\dotsc,i_l,a_{l+1},\dotsc 
,a_n+u ) \\
&= \biggl(\frac{(k+1)p_n+p_{n-1} }{ (k+1)q_n+q_{n-1}} , \frac{ p_n+p_{n-1} }{ q_n+q_{n-1}}  \biggr)\\
&=   \biggl(\frac{[(k+1)a_n+1] 
	p_{n-1}+(k+1)p_{n-2} }{ [(k+1)a_n+1] 
	q_{n-1}+(k+1)q_{n-2}} , \frac{(a_n+1) 
	p_{n-1}+p_{n-2} }{ (a_n+1) q_{n-1}+q_{n-2}}  
\biggr)  
\end{align*} where $ p_{n-1}=p_{n-1}(i_1,i_2,\dotsc,i_l,a_{l+1},\dotsc,a_{n-1}) $,  $ q_{n-1}=q_{n-1}(i_1,i_2,\dotsc,i_l,a_{l+1},\dotsc,a_{n-1}) $, $p_{n}=p_{n}(i_1,i_2,\dotsc,i_l,a_{l+1},\dotsc,a_{n}) $, and $q_{n}=q_{n}(i_1,i_2,\dotsc,i_l,a_{l+1},\dotsc,a_{n}) $. 
As a result,  the distance between $
M^{m+1}_{i_1,i_2,\dotsc,i_l,a_{l+1},\dotsc,a_{m},s}
$ and $ 
M^{m+1}_{i_1,i_2,\dotsc,i_l,a_{l+1},\dotsc 
	,a_{m},t} $ is not less than the minimum of 
\begin{equation}\label{ss}
\biggl| \frac{(t+1) p_{m}+p_{m-1} }{ (t+1) 
	q_{m}+q_{m-1}}- \frac{[(k+1)s+1] 
	p_{m}+(k+1)p_{m-1} }{ [(k+1)s+1] 
	q_{m}+(k+1)q_{m-1}}\biggr|
\end{equation}
and
\begin{equation}\label{tt}
\biggl| \frac{(s+1) p_{m}+p_{m-1} }{ (s+1) 
	q_{m}+q_{m-1}}- \frac{[(k+1)t+1] 
	p_{m}+(k+1)p_{m-1} }{ [(k+1)t+1] 
	q_{m}+(k+1)q_{m-1}}\biggr|
\end{equation}
Furthermore, both \eqref{ss} and \eqref{tt} are not less than $ \frac{1}{ 3k^3 q_{m}(q_{m}+q_{m-1})  }$ , because
\begin{align*}
&\biggl| \frac{(t+1) p_{m}+p_{m-1} }{ (t+1) p_{m}+p_{m-1}}- \frac{[(k+1)s+1] p_{m}+(k+1)p_{m-1} }{ [(k+1)s+1] q_{m}+(k+1)q_{m-1}}\biggr|\\
= & \biggl| \frac{(k+1)(t+s+1)+1 }{   [(t+1)q_{m}+q_{m-1} ] [(k+1)(sq_{m}+q_{m-1})+q_{m} ]  } \biggr| \\
\geq & \frac{1}{ 3k^3 q_{m}(q_{m}+q_{m-1})  }
\end{align*}
and similarly   
\begin{align*}
&\biggl| \frac{(s+1) p_{m}+p_{m-1} }{ (s+1) 
	q_{m}+q_{m-1}}- \frac{[(k+1)t+1] 
	p_{m}+(k+1)p_{m-1} }{ [(k+1)t+1] 
	q_{m}+(k+1)q_{m-1}}\biggr|\\
\geq &  
\frac{1}{ 3k^3 q_{m}(q_{m}+q_{m-1})  }   .
\end{align*}  Therefore, it can be concluded 
that   \[ c_j-b_j\geq \frac{1}{3k^3}\bigl| 
I^{m}( i_1,i_2,\dotsc i_l,\dotsc,a_{l+1},\dotsc 
,a_{m})  \bigr| \]  from the  
fact that   \[\bigl| I^{m}( i_1,i_2,\dotsc 
i_l,\dotsc,a_{l+1},\dotsc ,a_{m})\bigr| =  
\frac{1}{   
	q_{m}(q_{m}+q_{m-1})  }  .\] So far, it has been 
proved that for any $ [b_j,c_j]\in \mathcal{G}'  
$, there exists an interval $ I^m( i_1,i_2,\dotsc 
i_l,$  $\dotsc,a_{l+1},\dotsc ,a_{m}) $ corresponding 
to it where $ m $ and $ 
a_{l+1},a_{l+2},\dotsc ,a_m $ are all related to 
$ 
[b_j,c_j] $.  Next, replacing $ [b_j,c_j] $ by the 
interval  $ I^m( i_1,i_2,\dotsc 
i_l,\dotsc,a_{l+1},\dotsc ,a_{m}) $,   we 
get another family $ \mathcal{C} $ that is   a   
$ 3k^3 \delta $-cover  of $ 
\phi\circ\theta_{i_1i_2\dotsc i_l}(W)\cap 
\phi(\Sigma_k) 
$.

Hitherto, we have known that for any $ p>0 $ and 
any  finite $ \delta
$-cover $ \mathcal{G} $ of $ 
\phi\circ\theta_{i_1i_2\dotsc i_l}(W)\cap 
\phi(\Sigma_k) $, there exists a $ 3k^3\delta 
$-cover 
$ \mathcal{C} $ of $ 
\phi\circ\theta_{i_1i_2\dotsc i_l}(W)\cap 
\phi(\Sigma_k) 
$   satisfying    \[ 3^sk^{3s}\sum_{B\in 
	\mathcal{G}} \diam(B) ^s \geq 
\sum_{C\in\mathcal{C}} \diam(C) ^s  \text{ for  any }   s>0  . \] 
Moreover, set  
\begin{multline*}
h=  \min\{ n : \text{ there 	exists   } n\text{ and } 
a_{l+1},a_{l+2},\dotsc,a_n\in\mathbb{N} \\ \text{ such that some }   I^n 
( i_1,i_2,\dotsc,i_l,a_{l+1},\dotsc,a_n )\in 
\mathcal{C}  \}.  
\end{multline*}
Obviously, $h \geq k   $.
If  there exist two intervals, for example some 
$ I^r , I^n\in\mathcal{C} $ with $ r>n $ and 
$I^r\subset I^n  $, then remove $ I^r $ from $ 
\mathcal{C} $.  We can  get another $ 
3k^3\delta $-cover $ \mathcal{M} $ by such a 
further repetition. It is clear that   any 
two distinct intervals $ I^r $ and $ I^n $ in $ 
\mathcal{M} $  do not intersect. Set   \[p=\max\{n: \text{ there 
	exists some  } I^n\in \mathcal{M}  \} .  \] Then $ p\geq h\geq k $.   
According to the definition, there exist  $ 
a_{l+1},a_{l+2},\dotsc ,a_p $ with $ a_i \leq i $ for $ l+1\leq i\leq k-1 $
 and $a_k, a_{k+1},\dotsc 
,a_p\leq k $ such that $ 
I^p ( i_1,i_2,\dotsc,i_l,a_{l+1},\dotsc 
,a_p )\in\mathcal{M} $, which implies that
$ 
I^p ( i_1,i_2,\dotsc,i_l,a_{l+1},\dotsc 
,1 )$,  $I^p ( i_1,i_2,\dotsc,i_l,a_{l+1},\dotsc 
,2 )$, $\dotsc$, 
$I^p ( i_1,i_2,\dotsc,i_l,a_{l+1},$  $\dotsc 
,k ) $ are certainly all in $ 
\mathcal{M} $.

\medskip
\noindent\textbf{Claim. } Let $ s=1-\frac{4}{k\ln 2} $. 
For any $ n\geq h+1 $, any  $ 
a_{l+1},a_{l+2},\dotsc ,a_p $ with $ a_i \leq i $ for $ l+1\leq i\leq k-1 $, and any  $ a_k,a_{k+1},\dotsc ,a_p\leq k $, 
it has that 
\[|I^{n-1}( i_1,i_2,\dotsc,i_l,a_{l+1},\dotsc 
,a_{n-1} )|^s\leq 
\sum_{i=1}^{k} |I^n( 
i_1,i_2,\dotsc.i_l,a_{l+1},\dotsc ,a_{n-1},i ) |^s.\]

\medskip
\noindent\textbf{Proof of the claim. }
At first, compute    \begin{align*}
\sum_{i=1}^{k} |I^n( 
i_1,i_2,\dotsc.i_l,a_{l+1},\dotsc ,a_{n-1},i ) | 
&= \frac{1}{q_{n-1}(q_{n-1}+q_{n-2}) }\biggl(1-\frac{q_{n-1}+q_{n-2}}{(k+1)q_{n-1}+q_{n-2} } \biggr).
\end{align*}
 So,
\begin{align*}
& \sum_{i=1}^{k} |I^n( 
i_1,i_2,\dotsc.i_l,a_{l+1},\dotsc ,a_{n-1},i ) 
|^s\\
=& \sum_{i=1}^{k}\frac{1}{(iq_{n-1}+q_{n-2} )[(i+1)q_{n-1}+q_{n-2} ] } \times (iq_{n-1}+q_{n-2})^{1-s}[(i+1)q_{n-1}+q_{n-2} ]^{1-s}\\
\geq & \frac{1}{q_{n-1}(q_{n-1}+q_{n-2}) }\biggl(1-\frac{q_{n-1}+q_{n-2}}{(k+1)q_{n-1}+q_{n-2} } \biggr) \times 2^{1-s}q_{n-1}^{1-s}(q_{n-1}+q_{n-2})^{1-s}\\
\geq &\frac{1}{q_{n-1}(q_{n-1}+q_{n-2}) }  \biggl( 1-\frac{\beta}{k}\biggr) \times 2^{1-s}q_{n-1}^{1-s}(q_{n-1}+q_{n-2})^{1-s} \\
=& |I^{n-1}  (i_1,i_2,\dotsc,i_l,a_{l+1},\dotsc 
,a_{n-1} ) |^s\biggl( 
1-\frac{\beta}{k}\biggr) \times 2^{1-s}
\end{align*}
where $ 
\beta=\dfrac{kq_{n-1}+kq_{n-2}}{kq_{n-1}+q_{n-1}+q_{n-2}
} $ and $ \dfrac{1}{2}<\beta<2 $.
If $ s=1-\dfrac{4}{k\ln 2} $, then  \begin{align*}
-\ln \biggl(1-\frac{2}{k} \biggr)
&<\frac{2}{k}+\frac{1}{k}\biggl(\frac{2^2}{k}+\frac{2^3}{k^2}+\dotsc +\frac{2^n}{k^{n-1}}+\dotsc  \biggr) \\
&<\frac{4}{k}  =(1-s)\ln 2. 
\end{align*}
We obtain that  $ \biggl(1-\frac{\beta}{k}\biggr)\times 2^{1-s} \geq 1 $ for any $ \frac{1}{2}<\beta<2 $.
So, it is allowed to replace  the following $ k $ 
intervals : 
\begin{align*}
&I^k(i_1,i_2,\dotsc,i_l,a_{l+1},\dotsc ,a_{k-1},1) , \\
&I^k(i_1,i_2,\dotsc,i_l,a_{l+1},\dotsc 
,a_{k-1},2 
),\\
&\dotsc, \\
& I^k(i_1,i_2,\dotsc,i_l,a_{l+1},\dotsc 
,a_{k-1},k 
) 
\end{align*}
by 
one interval $ 
I^{k-1}(i_1,i_2,\dotsc,i_l,a_{l+1},\dotsc 
,a_{k-1}) $. Do such 
replacement until we get a $ 3k^3\delta $-cover $ 
\mathcal{J} $ in which the longest interval is 
some $ I^h $.  Through the process, it 
guarantees that 
\[ \sum_{A\in 
	\mathcal{M}}|A|^s\geq \sum_{E\in\mathcal{J}} 
|E|^s>0 .\] Thus, for any positive number $ p<\delta $ and any $ p $-cover  
$\mathcal{B} $, it has that 
\[\mathcal{H}^s(\mathcal{B})\geq \sum_{B\in 
	\mathcal{B}}|B|^s\geq 
\frac{1}{3^sk^{3s}}\sum_{C\in\mathcal{C}}|C|^s\geq\frac{1}{3^sk^{3s}}\sum_{A\in\mathcal{M}}|A|^s\geq
\frac{1}{3^sk^{3s}}\sum_{E\in\mathcal{J}} 
|E|^s>0,  \] 
which means  $ 
\dimh(\phi\circ\theta_{i_1i_2\dotsc i_l}(W)\cap 
\phi(\Sigma_k))\geq s= 1-\frac{4}{k\ln 2} $.
\end{proof}

Now we are ready to prove Theorem~\ref{main2}.
 \begin{proof} [Proof of Theorem~\ref{main2}]
Let $ 
W=\prod_{i=1}^{\infty}\{1,2,\dotsc,i\} $.  According to the proof of Proposition~\ref{countable}, we can construct a multiply Xiong chaotic set $ \bigcup_{n=1}^{\infty}\theta_{w_n}\circ\Delta_{\infty}(W) $ contained in the multiply proximal cell of $ \phi^{-1}(z) $.  By 
Proposition~\ref{project},  let    $  
  L=\phi 
  (C)=\bigcup_{n=1}^{\infty} \phi(
   \theta_{w_n}\circ\Delta_{\infty}(W) 
  )  $ be a 
  multiply
  Xiong chaotic set contained in $ MProx(z,T) $  
  in 
  the Gauss system. Now, it remains to  
  show that $ L\subset  [0,1) $ is a subset with 
  full Hausdorff dimension everywhere.   
  Note that  $ \phi \circ\theta_{i_1i_2\dotsb i_n}\circ \Delta_{\infty}(W) 
  $ is exactly the set $L\cap I^n(i_1,i_2,\dotsc ,i_n)  $ for any $ n\geq 1  $ and  any  
  $ i_1,i_2,\dotsb,i_n\in \mathbb{N}$, 
  it is 
  sufficient to show that 
  $ \dimh(\phi\circ\theta_{i_1i_2\dotsb i_n}\circ  \Delta_{\infty}(W)) =1 $. 
We divide the remaining part of the proof into three parts.   
First, define a map
\begin{align*}
f:&\phi\circ \theta_{i_1i_2\dotsc i_n}\circ 
\Delta_{\infty}(W)\to \phi\circ 
\theta_{i_1i_2\dotsc i_n} (W) ,\\
&\phi\circ\theta_{i_1i_2\dotsb 
 i_n}\circ\Delta_{\infty}(x) \mapsto  [ 
i_1,i_2,\dotsb,i_n,x_{n+1},\dotsb    ] . 
\end{align*}
It is not hard to see that 
$  f=\phi\circ\theta_{i_1i_2\dotsc i_n}\circ 
   \Delta_{\infty}^{-1}\circ\theta_{i_1i_2\dotsb
     i_n}^{-1}\circ\phi^{-1}$
 and $ f $ is bijective and  continuous.

Second, recall the subset $ D $ of $\mathbb{N} $ with zero density  in the construction of the map 
$ \Delta_{\infty} $ in the proof of Theorem \ref{main1}.
Let $\tau(n) =\#(\{a_i\leq n:a_i\in  D \} )  $.  It can be inferred that 
$  \lim\limits_{n\to\infty} 
 \frac{1+c\cdot\tau(n)}{n-\tau(n)-1 } =0 $ 
 where $ c $ is a non-negative constant. 
 For a fixed real number $ k>0 $, 
  if we choose 
 $ c=\frac{2\ln(k+1)}{\ln 2} $, 
 then for any $  \varepsilon>0 $, there exists a large enough 
   $ N \in\mathbb{N}$ such that for any $ n\geq N $,  
 \[
 \frac{\frac{2\ln  (k+1)}{\ln 2}\cdot \tau(n) 
 +1}{n-\tau(n)-1}<\varepsilon, 
\] 
  which means 
 \[
 \biggl(  2^{\varepsilon(n-\tau(n)-1)}\biggr)^{\frac{1}{1+\varepsilon}}
 \geq\biggl(   2(k+1)^{2\tau(n)}\biggr)^{\frac{1}{1+\varepsilon}}.
 \]
     
Third, we are going to prove that for $ \varepsilon>0 $,   $ f $ satisfies the locally $ 
   \frac{1}{1+\varepsilon} $-H\"older condition. 
 Since $ \Delta_{\infty}(W) $ is compact, 
 the set 
 $\mathcal{D}_i =\{x[i]:x\in \Delta_{\infty}(W)  \} $ 
 is finite for any  $ i\geq 1 $. 
Set 
\[
  r=\min_{a_i\in \mathcal{D}_i,\  n+1\leq i\leq N}
  |I^N(i_1,i_2,\dotsc,i_n,a_{n+1},\dotsc, a_N ) |.
  \]
For a pair of two distinct points 
$ \beta=[i_1,i_2,\dotsb, i_n,b_{n+1},\dotsb]$
and $\gamma=[i_1,i_2,\dotsb,i_n,c_{n+1},\dotsc ]$  in 
$ \phi\circ \theta_{i_1i_2\dotsc i_n}\circ\Delta_{\infty}(W) $  
with $ |\beta-\gamma|<r  $, 
   there 
 exists some $ m \geq N $ such that 
 $b_i=c_i $ for any $ 1\leq i\leq m $ but 
 $ b_{m+1}\neq c_{m+1} $ with   $  N+1\leq m+1\not \in D $. 
Obviously, both $ b_{m+1} $ and $ c_{m+1} $ are not more than 
some $ k $ since 
$ \phi\circ \theta_{i_1i_2\dots i_n}\circ\Delta_{\infty}(W) $ 
is  compact. 
If $ m $ is odd, $ \gamma$ is less than $\beta $ and the interval 
$I^{m+2}(b_1,b_2,\dotsc ,b_{m+1},k+1)  $ is 
contained in $ [\gamma,\beta] $. 
Otherwise, $ \gamma$ is larger than $\beta $ and $I^{m+2}(b_1,b_2,\dotsc ,b_{m+1},k+1) $ is contained in $ [\beta,\gamma] $. 
So, by Lemma~\ref{In-AP} ,
 \begin{align} \label{a}
|\gamma-\beta| \geq | I^{m+2}(b_1,b_2,\dotsc ,b_{m+1},k+1)| 
 &\geq \frac{| I^{m+1} ( b_1,b_2,\dotsc ,b_{m+1})||I^1(k+1) | }{\lambda} \\\notag
&\geq\frac{|I^{m } ( b_1,b_2,\dotsc ,b_{m })|
 |I^1(b_{m+1})| |I^1 ( k+1)| }
{\lambda^ 2 } \\ \notag
 &\geq \frac{|I^{m }( b_1,b_2,\dotsc ,b_{m })|
   |I^1(k+1)| ^2 }{\lambda^ 2 }.\notag
 \end{align} 
Set  $\eta$ be the word by erasing the items with 
position in $ D $ from $ i_1,\dotsb ,i_n,b_{n+1},\dotsb, b_{m}$. Let $  M_{i_1,i_2,\dotsc,i_n,\dotsc,b_m}=
\max\{i_1,i_2,\dotsc,i_n,\dotsc ,b_m\}+1 $.  Note that $ m\geq N $, 
by Lemma~\ref{DZ},   
\begin{align*}
 | f(\gamma)-f(\beta)|  \leq  |I^{N-\tau(N)}(\eta )   | 
 &\leq q^{-2}_{m-\tau(m)} (\eta )   
 =q^{-\frac{2}{1+\varepsilon} }_{m-\tau(m)} (\eta) \times 
q^{-\frac{2\varepsilon}{1+\varepsilon}  }_{m-\tau(m)} (\eta ) \\
&\leq q^{-\frac{2}{1+\varepsilon} }_{m-\tau(m)} 
(\eta)\times \bigl(2^{\varepsilon(m-\tau(m)-1)}\bigr)^{-\frac{1}{1+\varepsilon}}
\\
& \leq  q^{-\frac{2}{1+\varepsilon} }_{m-\tau(m)} (\eta)\times \bigl( 2M_{i_1,i_2,\dotsc ,i_n,b_{n+1},\dotsc,b_m}^{2\tau(m)}\bigr)^{-\frac{1}{1+\varepsilon}}\\
 & \leq q_m^{-\frac{2}{1+\varepsilon}}(b_1b_2\dotsc b_m) \\
 &\leq |I^m(b_1,b_2,\dotsc ,b_m)|^{\frac{1}{1+\varepsilon}}  .
\end{align*}
 Combine with \eqref{a}, it is clear that 
 \[ 
|f(\gamma)-f(\beta) |\leq 
  \bigl(\frac{\lambda^2}{|I^1 (k+1) | ^2} 
   \bigr)^{\frac{1}{1+\varepsilon}}
   |\gamma-\beta|^{\frac{1}{1+\varepsilon}} . 
  \] 
It can be concluded from Lemma~\ref{holder}
 that \[ \dimh (\phi\circ\theta_{i_1i_2\dotsc i_n} (W) )
 \leq \dimh (\phi\circ\theta_{i_1i_2\dotsc i_n}\circ\Delta_{\infty} (W)) \] for any $i_1,i_2,\dotsc,i_n\in\mathbb{N} $.
Hence,  
\[\dimh(L\cap I^n(i_1i_2\dotsc i_n))\geq \dimh
(\phi\circ\theta_{i_1i_2\dotsc i_n} (W)) \]
for any $ i_1,i_2,\dotsc,i_n\in\mathbb{N} $.
 By Proposition~\ref{dim},  
 \[\dimh
(\phi\circ\theta_{i_1i_2\dotsc i_n} (W)) \geq  \dimh(\phi\circ\theta_{i_1i_2\dotsc i_l}(W
)\cap \phi(\Sigma_k)) \geq 1-\frac{4}{k\ln 2}. \] We obtain  
 \[
 1\geq  \dimh (L\cap I^n(i_1i_2\dotsc i_n) ) \geq 1-\frac{4}{k\ln 2}\] 
 for any $k>\max\{i_1,i_2,\dotsc,i_n,n+1\} $. 
Passing $ k\to \infty $, this implies  
\[\dimh (L\cap I^n(i_1i_2\dotsc i_n) )=1 \]
for any $ i_1,i_2,\dotsc,i_n\in\mathbb{N} $. 
 \end{proof}

\section*{Acknowledgments}
 The work was supported by NNSF of China (Nos. 11471125, 11871228 and 11771264) and NSF of Guangdong Province(2018B030306024).

\begin{bibsection}
\begin{biblist}[\resetbiblist{99}]
  
  \bib{MR2718894}{article}{
  	author={Akin, E.},
  	author={Glasner, E.},
  	author={Huang, W.},
  	author={Shao, S.},
  	author={Ye, X.},
  	title={Sufficient conditions under which a transitive system is chaotic},
  	journal={Ergodic Theory Dynam. Systems},
  	volume={30},
  	date={2010},
  	number={5},
  	pages={1277--1310},
  	issn={0143-3857},
  	review={\MR{2718894}},
  }

\bib{Au60}{article}{
  author={Auslander, Joseph},
  title={On the proximal relation in topological dynamics},
  journal={Proc. Amer. Math. Soc.},
  volume={11},
  date={1960},
  pages={890--895},
  issn={0002-9939},
  review={\MR{0164335}},
  }
 
 \bib{AK03}{article}{
  author={Akin, Ethan},
  author={Kolyada, Sergi\u\i },
  title={Li-Yorke sensitivity},
  journal={Nonlinearity},
  volume={16},
  date={2003},
  number={4},
  pages={1421--1433},
  issn={0951-7715},
  review={\MR{1986303}},
 }

\bib{BL99}{article}{
	author={Balibrea, Francisco},
	author={Jim\'{e}nez L\'{o}pez, V\'{i}ctor},
	title={The measure of scrambled sets: a survey},
	journal={Acta Univ. M. Belii Ser. Math.},
	number={7},
	date={1999},
	pages={3--11},
	issn={1338-712X},
	review={\MR{1766951}},
}

\bib{BH08}{article}{
	author={Blanchard, Fran\d{c}ois},
	author={Huang, Wen},
	title={Entropy sets, weakly mixing sets and entropy capacity},
	journal={Discrete Contin. Dyn. Syst.},
	volume={20},
	date={2008},
	number={2},
	pages={275--311},
	issn={1078-0947},
	review={\MR{2358261}},
}

\bib{BHS08}{article}{
	author={Blanchard, Fran\d{c}ois},
	author={Huang, Wen},
	author={Snoha, L'ubom\'{i}r},
	title={Topological size of scrambled sets},
	journal={Colloq. Math.},
	volume={110},
	date={2008},
	number={2},
	pages={293--361},
	issn={0010-1354},
	review={\MR{2353910}},
}

\bib{B09}{article}{
 author={Bruin, Henk},
 author={Jim\'enez L\'opez, V\'\i ctor},
 title={On the Lebesgue measure of Li-Yorke pairs for interval maps},
 journal={Comm. Math. Phys.},
 volume={299},
 date={2010},
 number={2},
 pages={523--560},
 issn={0010-3616},
 review={\MR{2679820}},
}

\bib{FHYZ12}{article}{
	author={Fang, Chun},
	author={Huang, Wen},
	author={Yi, Yingfei},
	author={Zhang, Pengfei},
	title={Dimensions of stable sets and scrambled sets in positive finite
		entropy systems},
	journal={Ergodic Theory Dynam. Systems},
	volume={32},
	date={2012},
	number={2},
	pages={599--628},
	issn={0143-3857},
	review={\MR{2901362}},
}

   \bib{Fu81}{book}{
    author={Furstenberg, H.},
    title={Recurrence in ergodic theory and combinatorial number theory},
    note={M. B. Porter Lectures},
    publisher={Princeton University Press, Princeton, N.J.},
    date={1981},
    pages={xi+203},
    isbn={0-691-08269-3},
    review={\MR{603625}},
   }

   \bib{Fa2003}{book}{
   author={Falconer, Kenneth},
   title={Fractal geometry},
   edition={3},
   note={Mathematical foundations and applications},
   publisher={John Wiley \& Sons, Ltd., Chichester},
   date={2014},
   pages={xxx+368},
   isbn={978-1-119-94239-9},
   review={\MR{3236784}},
  }
  
  \bib{Hu2014}{article}{
   author={Hu, Hui},
   author={Yu, Yueli},
   title={On Schmidt's game and the set of points with non-dense orbits
    under a class of expanding maps},
   journal={J. Math. Anal. Appl.},
   volume={418},
   date={2014},
   number={2},
   pages={906--920},
   issn={0022-247X},
   review={\MR{3206688}},
  }

 \bib{HLYZ17}{article}{
 	author={Huang, Wen},
 	author={Li, Jian},
 	author={Ye, Xiangdong},
 	author={Zhou, Xiaoyao},
 	title={Positive topological entropy and $\Delta$-weakly mixing sets},
 	journal={Adv. Math.},
 	volume={306},
 	date={2017},
 	pages={653--683},
 	issn={0001-8708},
 	review={\MR{3581313}},
 }

  \bib{HW04}{article}{
   author={Huang, Wen},
   author={Shao, Song},
   author={Ye, Xiangdong},
   title={Mixing and proximal cells along sequences},
   journal={Nonlinearity},
   volume={17},
   date={2004},
   number={4},
   pages={1245--1260},
   issn={0951-7715},
   review={\MR{2069703}},
  }

 \bib{Io2002}{book}{
  author={Iosifescu, Marius},
  author={Kraaikamp, Cor},
  title={Metrical theory of continued fractions},
  series={Mathematics and its Applications},
  volume={547},
  publisher={Kluwer Academic Publishers, Dordrecht},
  date={2002},
  pages={xx+383},
  isbn={1-4020-0892-9},
  review={\MR{1960327}},
 }
  
  \bib{J28}{article}{
   author={{Jarn\' ik}, {Vojt\v ech}},
   title={Zur metrischen Theorie der diophantischen Approximationen},
   language={German},
   journal={Prace Matematyczno-Fizyczne},
   volume={36},
   date={1928},
   url={http://eudml.org/doc/215473},
   language = {pol},
   issue = {1},
    number={1},
   pages={91--106},
    issn={0025-5874},
    review={\MR{1544787}},
  }

 \bib{Liu2019}{article}{
	author={Liu, Kairan},
	title={$\Delta $-weakly mixing subset in positive entropy actions of a nilpotent group},
	journal={J. Differential Equations},
	date={2019},
	issn={0022--0396},
 	doi={doi.org/10.1016/j.jde.2019.01.018},
 	url={www.sciencedirect.com/science/article/pii/S0022039619300403},
}

  \bib{Li2017}{article}{
   author={Liu, Weibin},
   author={Li, Bing},
   title={Chaotic and topological properties of continued fractions},
   journal={J. Number Theory},
   volume={174},
   date={2017},
   pages={369--383},
   issn={0022-314X},
   review={\MR{3597396}},
  }

 \bib{R61}{article}{
  author={Rohlin, V. A.},
  title={Exact endomorphisms of a Lebesgue space},
  language={Russian},
  journal={Izv. Akad. Nauk SSSR Ser. Mat.},
  volume={25},
  date={1961},
  pages={499--530},
  issn={0373-2436},
  review={\MR{0143873}},
 }

  \bib{WT07}{article}{
   author={Wu, ChunLan},
   author={Tan, Feng},
   title={Hausdorff dimension of a chaotic set of shift in a countable
    symbolic space},
   language={Chinese, with English and Chinese summaries},
   journal={J. South China Normal Univ. Natur. Sci. Ed.},
   date={2007},
   number={4},
   pages={11--16},
   issn={1000-5463},
   review={\MR{2385049}},
  }
 
 \bib{WP}{book}{
  author={Walters, Peter},
  title={An introduction to ergodic theory},
  series={Graduate Texts in Mathematics},
  volume={79},
  publisher={Springer-Verlag, New York-Berlin},
  date={1982},
  pages={ix+250},
  isbn={0-387-90599-5},
  review={\MR{648108}},
 }
  
  \bib{Wu2006}{article}{
   author={Wu, Jun},
   title={A remark on the growth of the denominators of convergents},
   journal={Monatsh. Math.},
   volume={147},
   date={2006},
   number={3},
   pages={259--264},
   issn={0026-9255},
   review={\MR{2215567}},
  } 
  
  \bib{X91}{article}{
   author={Xiong, JinCheng},
   author={Yang, ZhongGuo},
   title={Chaos caused by a topologically mixing map},
   conference={
    title={Dynamical systems and related topics},
    address={Nagoya},
    date={1990},
   },
   book={
    series={Adv. Ser. Dynam. Systems},
    volume={9},
    publisher={World Sci. Publ., River Edge, NJ},
   },
   date={1991},
   pages={550--572},
   review={\MR{1164918}},
  }
  
  \bib{XX91}{article}{
   author={Xiong, JinCheng},
   title={Erratic time dependence of orbits for a topologically mixing map},
   language={English, with Chinese summary},
   journal={J. China Univ. Sci. Tech.},
   volume={21},
   date={1991},
   number={4},
   pages={387--396},
   issn={0253-2778},
   review={\MR{1155316}},
  }
  
  \bib{X95}{article}{  
   author={Xiong, JinCheng},
   title={Hausdorff dimension of a chaotic set of shift of a symbolic space},
   journal={Sci. China Ser. A},
   volume={38},
   date={1995},
   number={6},
   pages={696--708},
   issn={1001-6511},
   review={\MR{1351235}},
  }  
 \end{biblist}
\end{bibsection}

\end{document}